\def \argm{{\rm argmin} }
\theoremstyle{thmstyleone}%
\newtheorem{theorem}{Theorem}
\newtheorem{proposition}[theorem]{Proposition}%
\theoremstyle{thmstyletwo}%
\newtheorem{example}{Example}%
\newtheorem{remark}{Remark}%
\newtheorem{lemma}{Lemma}%
\theoremstyle{thmstylethree}%
\newtheorem{definition}{Definition}%
\numberwithin{theorem}{section}
\numberwithin{lemma}{section}
\numberwithin{corollary}{section}
\begin{document}

\title[Extrapolated Hard Thresholding Algorithms with Finite Length for Composite $\ell_0$ Penalized Problems]{Extrapolated Hard Thresholding Algorithms with Finite Length for Composite $\ell_0$ Penalized Problems}


\author[1]{\fnm{Fan} \sur{Wu}}\email{wufanmath@163.com}

\author[1]{\fnm{Jiazhen} \sur{Wei}}\email{jiazhenwei98@163.com}

\author*[1]{\fnm{Wei} \sur{Bian}}\email{bianweilvse520@163.com}

\affil[1]{\orgdiv{School of Mathematics}, \orgname{Harbin Institute of Technology}, \orgaddress{\city{Harbin},  \country{China}}}




\abstract{For a class of sparse optimization problems with the penalty function of $\|(\cdot)_+\|_0$, we first characterize its local minimizers and then propose an extrapolated hard thresholding algorithm to solve such problems. We show that the iterates generated by the proposed algorithm with $\epsilon>0$ (where $\epsilon$ is the dry friction coefficient) have finite length, without relying on the Kurdyka-{\L}ojasiewicz inequality. Furthermore, we demonstrate that the algorithm converges to an $\epsilon$-local minimizer of this problem. For the special case that $\epsilon=0$, we establish that any accumulation point of the iterates is a local minimizer of the problem. Additionally, we analyze the convergence when an error term is present in the algorithm, showing that the algorithm still converges in the same manner as before, provided that the errors asymptotically approach zero. Finally, we conduct numerical experiments to verify the theoretical results of the proposed algorithm.}

\keywords{Nonsmooth optimization, Cardinality functions, Extrapolated algorithm, Sequential convergence}


\pacs[MSC Classification]{90C30, 49M37, 65K05}

\maketitle

\section{Introduction}\label{intro}
In this paper, we consider composite $\ell_0$ penalized optimization problems of the form
\begin{equation}\label{pro1}
\min_{x\in\Omega}\ F(x):= f(x)+\lambda_1\|x_+\|_0+\lambda_2\|x_-\|_0,
\end{equation}
where $f:\mathbb{R}^n\to\mathbb{R}$ is a convex function whose gradient is $L_f$-Lipschitz continuous, $\lambda_1>0$ and $\lambda_2\geq0$ are weight coefficients. For any vector $x\in\mathbb{R}^n$, $\|x_+\|_0:=\sum_{i=1}^n \mathbf{1}_{(0,+\infty)}(x_i)$ and $\|x_-\|_0:=\sum_{i=1}^n \mathbf{1}_{(0,+\infty)}(-x_i)$, where 
$\mathbf{1}_{(0,+\infty)}(t):=\left\{
\begin{aligned}
&1,\  t>0 \\
&0,\ t\leq 0 
\end{aligned}
\right.$ 
is called the Heaviside function. 
Obviously, $\|x_+\|_0$ represents the number of all positive elements of $x$. The constraint set is defined as
\begin{equation*}
\Omega=\{x\in\mathbb{R}^n:l_i\leq x_i\leq u_i,\ \forall i=1,2,\cdots,n\},
\end{equation*}
where $-\infty\leq l_i \leq 0$, $0\leq u_i \leq +\infty$ with $l_i<u_i$ for all $i=1,2,\cdots,n$. 
It is easy to observe that $\|x\|_0=\|x_+\|_0+\|x_-\|_0$ for any $x\in\mathbb{R}^n$. This implies that problem \eqref{pro1} with $\lambda_1=\lambda_2$ reduces to the following $\ell_0$ penalized sparse optimization problem 
\begin{equation}\label{pro2}
\min_{x\in\Omega}\ F_0(x):= f(x)+\lambda_1\|x\|_0.
\end{equation}

The optimization problems involving the Heaviside function frequently arise in various practical applications, such as image and signal processing \cite{Candes2006Robust}, machine learning \cite{DY2018Hu} and binary classification problem \cite{WANG2022SU}. However, problem \eqref{pro1} is generally NP-hard due to the nonconvexity and discontinuity of the Heaviside function. This challenge motivates us to develop an extrapolated hard thresholding algorithm for addressing composite $\ell_0$ penalized optimization problem \eqref{pro1}, leveraging the special structure of the Heaviside function and the dry-friction property to ensure a finite length trajectory.

At present, there are mainly two classes of methods to solve Heaviside (or $\ell_0$) penalized problems: one is based on continuous relaxations or smoothing approximations of Heaviside functions \cite{bian2024nonsmooth,Cui2023The,Ermoliev1995The}, and the other is to directly optimize the Heaviside function \cite{Han2024ana,Lu2014Iterative,Zhang20224Zero}. Notably, the $\ell_1$ norm, as the optimal continuous convex relaxation of the $\ell_0$ norm, can efficiently identify sparse solutions and has a wide range of applications. However, using the $\ell_1$ norm sometimes leads to over-penalization or biased estimates. Then various continuous nonconvex relaxation penalty functions of the $\ell_0$ norm are proposed, such as the smoothly clipped absolute deviation (SCAD) function \cite{Fan2001Var}, the hard thresholding function \cite{Zheng2014High} and the capped-$\ell_1$ function \cite{Peleg2008A}. Some of these relaxation functions can be rewritten as the difference of two convex functions (DC), which has also garnered significant attention \cite{Bian2020a,Gasso2009Re,Thi2008A}. In this work, we adopt the direct method to solve problem \eqref{pro1}. 
The proximal operator of the $\ell_0$ norm has a closed-form solution due to its special structure. Utilizing this property, Blumensath and Davies \cite{Blumensath2008Iterative,Blumensath2009Iterative} proposed an iterative hard thresholding (IHT) algorithm to solve $\ell_0$ penalized least squares problems and proved that it converges to a local minimizer under conditions on the linear operator $A\in\mathbb{R}^{m\times n}$. 
The IHT algorithm is particularly well-suited for large-scale optimization problems and demonstrates strong performance.
With the advancement of science and technology and increasing practical demands, sparse optimization models have evolved from compressed sensing models to more general optimization frameworks.
Inspired by the IHT algorithm and the block coordinate descent (BCD) method, Lu and Zhang \cite{Lu2012Sparse} developed a penalty decomposition algorithm for solving general $\ell_0$-penalized and $\ell_0$-constrained minimization problems. Subsequently, Lu \cite{Lu2014Iterative} independently proposed an IHT algorithm and its variants to address problem \eqref{pro2}, showing convergence to a local minimizer. In recent years, algorithmic research on problem \eqref{pro1} with $\lambda_2=0$ has received considerable attention \cite{WANG2022SU,Zhang20224Zero,Zhou2021Quadratic}, particularly when the Heaviside function is composed with a linear operator. In \cite{WANG2022SU,Zhang20224Zero}, the authors reformulated the problem as an equality-constrained optimization problem and developed $L_{0/1}$ ADMM and inexact augmented {L}agrangian methods to solve it. Zhou et al. \cite{Zhou2021Quadratic} addressed the case that $f$ is twice continuously differentiable. Notably, all these works emphasized that the proximal operator of the Heaviside function has a closed-form solution. However, to the best of our knowledge, no prior work has specifically employed the IHT algorithm to solve the discontinuous composite $\ell_0$ penalized optimization problem \eqref{pro1}. This paper seeks to fill that gap.

The Kurdyka-{\L}ojasiewicz (K{\L}) property is a key tool for establishing the convergence of the iterates of continuous or discrete dynamical systems.
Here we review several first-order algorithms satisfying the finite length property for nonconvex problems, which have a similar condition or structure to that of problem \eqref{pro1}. For minimizing a proper and lower semicontinuous (l.s.c.) function in \cite{Attouch2009On}, Attouch and Bolte used the K{\L} property to establish the global convergence and convergence rate of the proximal algorithm, and showed that the limit point is a critical point. 
Subsequently, reference \cite{Attouch2013Co} analyzed the finite length property of bounded sequences generated by inexact versions of several algorithms, including the proximal algorithm \cite{Attouch2009On}, for different nonconvex closed objective functions satisfying K{\L} property.
Li and Pong \cite{Li2018Ca} studied the proximal gradient algorithm for minimizing the sum of a smooth function with Lipschitz continuous gradient and a proper closed function under the K{\L} property. They proved that the algorithm converges locally linearly to a critical point and further has a finite length. 
For the same model in \cite{Li2018Ca}, Jia et al. \cite{Jia2023Convergence} relaxed the gradient condition of the loss function to local Lipschitz continuous and showed that this together with the K{\L} property is sufficient to obtain the global convergence and rate-of-convergence results of the proximal gradient algorithm.
In \cite{Nguyen2024Fast}, the authors introduced a dry-like friction property in a nonconvex setting and designed a fast gradient algorithm for minimizing a gradient Lipschitz continuous function. They proved that the algorithm converges to a critical point when the K{\L} property holds.
For additional results on length finite within the framework of the K{\L} property, readers can refer to \cite{Attouch2013PR,Bot2019A,Bolte2007The,Laszlo2021Co} and the references therein.
It is important to note that all the works mentioned above rely on the K{\L} property to show that the trajectory has a finite length and the limit point is only a critical point.

Without relying on the K{\L} property, Adly and Attouch \cite{Adly2020finite} proposed a new class of proximal gradient algorithms with finite length to minimize a function whose gradient is Lipschitz continuous, which can be viewed as discrete time versions of the inertial gradient system incorporating dry friction and Hessian-driven damping.
It was proven that the limit point of the iterates is an approximate critical point. Additionally, they extended their work in \cite{Adly2022First} by considering the case without Hessian-driven damping, achieving similar results.
Although the algorithms proposed in \cite{Adly2020finite,Adly2022First} exhibit the finite length property and strong numerical performance, they are not directly applicable to problem \eqref{pro1} because of the nonsmoothness of the Heaviside function. This naturally raises a question of whether we can develop a first-order method with the finite length property for the composite $\ell_0$ penalized optimization problem \eqref{pro1} by leveraging the dry friction and the Hessian-driven damping.

Motivated by the works in \cite{Adly2020finite,Adly2022First} and the special structure of the Heaviside function, this paper is devoted to designing an extrapolated hard thresholding algorithm to solve problem \eqref{pro1}, achieving the finite length property without the aid of the K{\L} property. To the best of our knowledge, the finite length property of IHT algorithm for solving problem \eqref{pro1} given in this paper is new even in the context of $f$ convex setting. The analysis technique presented in this paper differs significantly from \cite{Adly2020finite,Adly2022First} due to the nonsmoothness and discontinuity of Heaviside function.
A key challenge is that the subdifferential of function $\|(\cdot)_+\|_0$ at the zero component is the half space, which complicates the application of first-order optimality conditions in certain scenarios.

The main contributions of this paper can be summarized as follows. (i) We provide equivalent characterizations of the local minimizers of problem \eqref{pro1} in different cases, and propose an extrapolated hard thresholding algorithm. (ii) For $\epsilon>0$, the sequence $\{x^k\}$ generated by the proposed algorithm has a finite length, i.e. $\sum_{k=1}^{\infty}\|x^{k+1}-x^k\|<\infty$. 
Notably, this is achieved without using the K{\L} property, distinguishing our approach from the literatures mentioned before. (iii) We show that the iterates converge to an $\epsilon$-local minimizer of problem \eqref{pro1}. Furthermore, when $\epsilon=0$, the algorithm exhibits the subsequential convergence to a local minimizer of problem \eqref{pro1}. (iv) We examine the stability of the proposed algorithm by introducing perturbations and errors, showing its robustness.

The paper is organized as follows. In section 2, we review fundamental definitions and preliminary results that will be used in the subsequent analysis. In section 3, we characterize the local minimizers of problem \eqref{pro1}. In section 4, we propose an extrapolated hard thresholding algorithm incorporating Hessian-driven damping and dry friction, and we establish its convergence properties. 
In section 5, we analyze the effect of the proposed algorithm with perturbations and the corresponding errors. Meanwhile, we examine its stability. In section 6, we present numerical experiments to demonstrate the good performance of the proposed algorithm in comparison to existing methods.

\section{Notations and preliminaries}
In what follows, we introduce some notations and recall a few definitions and properties concerning subdifferential, which will be used in the subsequent analysis.

Throughout the paper, we denote $\mathbb{N}=\{0,1,2,\cdots\}$ and $[n]=\{1,2,\cdots,n\}$. The $n$-dimensional Euclidean space is denoted by $\mathbb{R}^n$, endowed with the scalar product $\langle \cdot,\cdot \rangle$. $\|\cdot\|$ represents the Euclidean norm. For a nonempty closed convex set $\mathcal{Y}\subseteq\mathbb{R}^n$ and a point $x\in\mathcal{Y}$, $\mathcal{N}_{\mathcal{Y}}(x)$ means the normal cone of $\mathcal{Y}$ at $x$.
Given $x\in\mathbb{R}^n$ and $\delta>0$, we define $B(x, \delta)=\{y\in\mathbb{R}^n: \|y-x\|\leq \delta\}$. For a matrix $A\in\mathbb{R}^{m\times n}$, let $\|A\|$ denote its spectral norm, which is the square root of the largest eigenvalue of $A^{\mathrm{T}}A$. For any $x\in\mathbb{R}^n$, we define
$$\Gamma(x)=\{i : x_i\neq 0\}.$$
$\Gamma^c(x)\subseteq [n]$ represents the complement of $\Gamma(x)$. $|\Gamma(x)|$ denotes the number of the elements belonging to set $\Gamma(x)$.

\begin{definition}\cite{rockafellar2009variational}
	Consider a function $f:\mathbb{R}^n\to[-\infty,+\infty]$ and a point $\bar{x}$ with finite $f(\bar{x})$.
	The general (or limiting) subdifferential of $f$ at $\bar{x}$ is defined as
	\begin{equation*}
	\partial f(\bar{x})=\left\{ v\in\mathbb{R}^n: \exists\ x^k \xrightarrow[f]{} x\ \mathrm{and}\ v^k\in \widehat{\partial}f(x^k)\ \mathrm{with}\ v^k\to v \ \mathrm{as}\  k\to\infty \right\},
	\end{equation*}
	where $x^k \xrightarrow[f]{} x$ means $x^k\to x$ and $f(x^k)\to f(x)$, and $\widehat{\partial}f$ represents the regular subdifferential of $f$.
\end{definition}

The function $\max\{\cdot, 0\}:\mathbb{R}^n\to\mathbb{R}^n_+$ is a convex function. Therefore, it is continous, which together with l.s.c. of $\ell_0$ norm means that $\|x_+\|_0=\|\max\{x, 0\}\|_0$ is a l.s.c. function. Then we recall the form of the limiting subdifferential of function $\|(\cdot)_+\|_0$ and some conclusions on the limiting subdifferential.

\begin{lemma}\cite{Zhou2021Quadratic}
	For any fixed $\tilde{x}\in\mathbb{R}^n$, the limiting subdifferential of function $\|(\cdot)_+\|_0:\mathbb{R}^n\to\mathbb{R}$ at $\tilde{x}$ is
	\begin{equation}\label{L0P}
	\begin{aligned}
	\partial \|\tilde{x}_+\|_0
	=\left\{v\in\mathbb{R}^n:v_i=0,\ \mathrm{if}\ i\in\Gamma(\tilde{x});\ v_i\geq 0,\ \mathrm{if}\ i\in\Gamma^c(\tilde{x})\right\}.
	\end{aligned}
	\end{equation}
\end{lemma}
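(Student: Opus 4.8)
The plan is to exploit the separable structure of the objective and reduce the whole computation to the one-dimensional Heaviside function. Writing $h(t):=\mathbf{1}_{(0,+\infty)}(t)$, we have $\|x_+\|_0=\sum_{i=1}^n h(x_i)$, a finite sum of lower semicontinuous functions each depending on a single, distinct coordinate. For such separable functions the limiting subdifferential decomposes as a Cartesian product, namely $\partial\|\tilde{x}_+\|_0=\partial h(\tilde{x}_1)\times\cdots\times\partial h(\tilde{x}_n)$, by the separable sum rule for limiting subdifferentials \cite{rockafellar2009variational}. Hence it suffices to compute $\partial h$ at an arbitrary point of $\mathbb{R}$ and then read off each coordinate according to whether the index lies in $\Gamma(\tilde{x})$ or $\Gamma^c(\tilde{x})$.

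First I would determine the regular subdifferential $\widehat{\partial}h(\bar{t})$. For $\bar{t}\neq 0$ the function $h$ is locally constant, hence differentiable with zero derivative, so $\widehat{\partial}h(\bar{t})=\{0\}$. For $\bar{t}=0$, using $h(0)=0$ and the defining inequality $\liminf_{t\to 0}\frac{h(t)-vt}{|t|}\geq 0$, the right-hand quotient equals $\frac{1}{t}-v\to+\infty$ and imposes no constraint, while the left-hand quotient equals $v$; the condition therefore reduces to $v\geq 0$, giving $\widehat{\partial}h(0)=[0,+\infty)$.

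Next I would pass to the limiting subdifferential through $f$-attentive convergence. For $\bar{t}\neq 0$ the local constancy of $h$ yields $\partial h(\bar{t})=\{0\}$ at once. The delicate case is $\bar{t}=0$, and this is the step I expect to require the most care: a sequence $t^k\to 0$ with $h(t^k)\to h(0)=0$ cannot contain infinitely many positive terms, since $h\equiv 1$ on $(0,+\infty)$ would force the function values to tend to $1$ rather than $0$. This is precisely where the discontinuity of the Heaviside function enters, restricting the admissible sequences to $t^k\leq 0$ eventually. The available regular subgradients along such sequences are $\{0\}$ (when $t^k<0$) together with $[0,+\infty)$ (when $t^k=0$), all nonnegative, and taking $t^k\equiv 0$ recovers every $v\geq 0$; hence $\partial h(0)=[0,+\infty)$. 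Assembling the product then gives $v_i=0$ for $i\in\Gamma(\tilde{x})$ (where $\tilde{x}_i\neq 0$ forces $\partial h(\tilde{x}_i)=\{0\}$) and $v_i\geq 0$ for $i\in\Gamma^c(\tilde{x})$ (where $\tilde{x}_i=0$ gives $\partial h(\tilde{x}_i)=[0,+\infty)$), which is exactly the claimed formula \eqref{L0P}.
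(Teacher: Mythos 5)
Your proof is correct. Note that the paper itself offers no proof of this lemma --- it is imported verbatim from \cite{Zhou2021Quadratic} --- so there is no in-paper argument to compare against; your derivation supplies the missing details. The route you take is the natural one: the separable sum rule $\partial\|\tilde{x}_+\|_0=\partial h(\tilde{x}_1)\times\cdots\times\partial h(\tilde{x}_n)$ from \cite{rockafellar2009variational} is applicable since each $h(t)=\mathbf{1}_{(0,+\infty)}(t)$ is proper and l.s.c., your computation of $\widehat{\partial}h(0)=[0,+\infty)$ is right (the quotient from the right blows up to $+\infty$ and the quotient from the left equals $v$, forcing only $v\geq 0$), and you correctly identify the one delicate point: in the passage to the limiting subdifferential at $0$, the $f$-attentive convergence $h(t^k)\to h(0)=0$ rules out sequences with infinitely many positive terms, so only $t^k\leq 0$ contribute and the limiting subdifferential coincides with the regular one, $\partial h(0)=[0,+\infty)$. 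Assembling coordinates according to $\Gamma(\tilde{x})$ and $\Gamma^c(\tilde{x})$ then yields exactly \eqref{L0P}.
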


\begin{lemma}\cite{rockafellar2009variational}
	Suppose $f(x)=g(F(x))$ for a proper and l.s.c. function $g:\mathbb{R}^m\to [-\infty, +\infty]$ and a smooth mapping $F:\mathbb{R}^n\to\mathbb{R}^m$. Let $\bar{x}\in\mathbb{R}^n$ be
	a point where $f(\bar{x})$ is finite and the Jacobian $\nabla F(\bar{x})$ is of rank $m$. Then $\partial f(\bar{x})=\nabla F(\bar{x})^{\mathrm{T}}\partial g(F(\bar{x}))$.
\end{lemma}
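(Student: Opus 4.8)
The plan is to reduce the statement to the much simpler case in which the inner map $F$ is a diffeomorphism, exploiting the full-rank hypothesis $\mathrm{rank}\,\nabla F(\bar x)=m$ through the inverse function theorem. Since $\nabla F(\bar x)\in\mathbb{R}^{m\times n}$ is surjective, after relabeling coordinates I write $x=(y,z)$ with $y\in\mathbb{R}^m$, $z\in\mathbb{R}^{n-m}$, so that the $m\times m$ block $\nabla_y F(\bar x)$ is invertible. First I would introduce the augmented map $\Psi(x):=(F(x),z)$, whose Jacobian at $\bar x$ is the block matrix $\begin{pmatrix}\nabla F(\bar x)\\ P\end{pmatrix}$ with $P$ the projection $(y,z)\mapsto z$; this Jacobian is nonsingular, so by the inverse function theorem $\Psi$ is a local $C^1$ diffeomorphism near $\bar x$, with inverse $\Phi:=\Psi^{-1}$ satisfying $F(\Phi(u,z))=u$ on a neighborhood of $(\bar u,\bar z)$, where $\bar u:=F(\bar x)$.

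Next I would rewrite the composition in these coordinates. Setting $h(u,z):=g(u)$, one has $f(\Phi(u,z))=g(F(\Phi(u,z)))=g(u)=h(u,z)$, i.e. $f\circ\Phi=h$ locally. The proof then rests on two ingredients. The first is the separable rule $\partial h(\bar u,\bar z)=\partial g(\bar u)\times\{0\}$, which holds because $h$ does not depend on $z$: at the regular level the defining $\liminf$ forces the $z$-component of any subgradient to vanish and reduces the $u$-component to $\widehat\partial g(\bar u)$, and this product structure is preserved under the limit defining $\partial$. The second is the invariance of the limiting subdifferential under the diffeomorphism $\Phi$, namely $\partial(f\circ\Phi)(\bar u,\bar z)=\nabla\Phi(\bar u,\bar z)^{\mathrm T}\,\partial f(\bar x)$. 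Granting both, and using $\nabla\Phi(\bar u,\bar z)=\nabla\Psi(\bar x)^{-1}$, I obtain $\partial g(\bar u)\times\{0\}=(\nabla\Psi(\bar x)^{\mathrm T})^{-1}\partial f(\bar x)$, hence $\partial f(\bar x)=\nabla\Psi(\bar x)^{\mathrm T}(\partial g(\bar u)\times\{0\})$. Finally, since $\nabla\Psi(\bar x)^{\mathrm T}=[\nabla F(\bar x)^{\mathrm T}\mid P^{\mathrm T}]$, applying it to a pair $(w,0)$ with $w\in\partial g(\bar u)$ yields exactly $\nabla F(\bar x)^{\mathrm T}w$, giving the claimed identity $\partial f(\bar x)=\nabla F(\bar x)^{\mathrm T}\partial g(F(\bar x))$.

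The step I expect to be the main obstacle is the diffeomorphism-invariance identity, since it is where the limiting (as opposed to merely regular) subdifferential must be handled. At the regular level it is routine: a first-order Taylor expansion $\Phi(p+t)=\bar x+\nabla\Phi(p)t+o(\|t\|)$ together with the two-sided bound $c\|t\|\le\|\Phi(p+t)-\bar x\|\le C\|t\|$ (from continuity and invertibility of $\nabla\Phi$) transforms the $\liminf$ inequality defining $\widehat\partial f(\bar x)$ into the one defining $\widehat\partial(f\circ\Phi)(p)$, yielding $\widehat\partial(f\circ\Phi)(p)=\nabla\Phi(p)^{\mathrm T}\widehat\partial f(\Phi(p))$. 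To pass to $\partial$, I would take sequences $p^k\to p$ with $(f\circ\Phi)(p^k)\to(f\circ\Phi)(p)$ and $v^k\in\widehat\partial(f\circ\Phi)(p^k)$, $v^k\to v$; writing $x^k=\Phi(p^k)\to\bar x$ and $a^k=(\nabla\Phi(p^k)^{\mathrm T})^{-1}v^k\in\widehat\partial f(x^k)$, the key point is that invertibility and continuity of $\nabla\Phi$ force $a^k\to a:=(\nabla\Phi(p)^{\mathrm T})^{-1}v$, so $a\in\partial f(\bar x)$ and $v=\nabla\Phi(p)^{\mathrm T}a$.

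Conceptually this last convergence is the heart of the matter and explains the role of the hypothesis: for a general smooth $F$ the chain rule holds only as an inclusion and requires a constraint qualification precisely because the auxiliary multipliers $a^k$ (equivalently $w^k$) may escape to infinity; the rank-$m$ assumption removes this difficulty by making the relevant derivative boundedly invertible, so the multipliers stay bounded and converge, and both inclusions of the desired equality emerge simultaneously from the single reduction. An alternative that avoids the coordinate change works directly from the definitions---the upper inclusion via the pseudoinverse $w^k=(\nabla F(x^k)\nabla F(x^k)^{\mathrm T})^{-1}\nabla F(x^k)v^k$ to control boundedness, and the lower inclusion by lifting an approximating sequence for $\partial g(\bar u)$ through the submersion $F$---but I find the diffeomorphism reduction cleaner since it delivers equality in one stroke.
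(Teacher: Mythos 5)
The paper does not prove this lemma at all: it is quoted verbatim from Rockafellar and Wets's \emph{Variational Analysis} (the chain rule under a surjective linearization) and used as a black box, so there is no ``paper proof'' to compare against. Your argument is a correct, self-contained proof, and it follows essentially the route that Rockafellar and Wets themselves suggest for this result: augment $F$ to a local diffeomorphism $\Psi=(F,P)$ using the rank-$m$ hypothesis and the inverse function theorem, observe that $f\circ\Psi^{-1}$ is the $z$-independent function $h(u,z)=g(u)$ whose limiting subdifferential is $\partial g(\bar u)\times\{0\}$ by the separable rule, and invoke invariance of $\widehat\partial$ and $\partial$ under $C^1$ changes of coordinates. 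All the steps check out: the two-sided bound $c\|t\|\le\|\Phi(p+t)-\bar x\|\le C\|t\|$ is exactly what converts the $\liminf$ defining $\widehat\partial f(\Phi(p))$ into the one defining $\widehat\partial(f\circ\Phi)(p)$ and back (you should say explicitly that the reverse inclusion comes from applying the same computation to $\Psi=\Phi^{-1}$), and your limit passage with $a^k=(\nabla\Phi(p^k)^{\mathrm T})^{-1}v^k$ correctly isolates the real content of the full-rank hypothesis, namely that the multipliers stay bounded and converge, which is precisely what fails for a general smooth $F$ and forces the usual constraint qualification there. Two cosmetic points: the identity $f\circ\Phi=h$ holds only on a neighborhood of $(\bar u,\bar z)$, so you should remark that both subdifferentials are local objects; and ``smooth'' here means $C^1$, which is all the inverse function theorem and your Taylor expansion need.
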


	\vspace{0.12cm}

\begin{lemma}\cite{rockafellar2009variational}
	If $f(x)=f_1(x)+f_2(x)$, where $f_1$ is
	strictly continuous at $\bar{x}$ while $f_2$ is l.s.c. and proper with finite $f_2(\bar{x})$, then $\partial f(\bar{x})\subseteq \partial f_1(\bar{x})+ \partial f_2(\bar{x})$.
\end{lemma}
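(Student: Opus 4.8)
The plan is to deduce this limiting sum rule from the corresponding \emph{fuzzy} (approximate) sum rule for regular subgradients, combined with the fact that the regular subdifferential of a strictly continuous function is uniformly bounded by its local Lipschitz modulus. I would carry this out in three steps, and the whole argument reduces to a compactness-and-limit passage once the fuzzy rule is in hand.

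First I would take an arbitrary $v\in\partial f(\bar{x})$ and unpack the definition of the limiting subdifferential: there exist sequences $x^k\xrightarrow[f]{}\bar{x}$ and $v^k\in\widehat{\partial}f(x^k)$ with $v^k\to v$. The aim is to split each $v^k$ into an $f_1$-part and an $f_2$-part, up to a vanishing error, and then let $k\to\infty$. Second, at each $x^k$ I would invoke the fuzzy sum rule for the regular subdifferential of $f_1+f_2$ (valid in finite dimensions with $f_1$ strictly continuous and $f_2$ proper l.s.c.): for any tolerance $\varepsilon_k\downarrow 0$ there are points $y^k,z^k\in B(x^k,\varepsilon_k)$ with $|f_1(y^k)-f_1(x^k)|\le\varepsilon_k$ and $|f_2(z^k)-f_2(x^k)|\le\varepsilon_k$, together with regular subgradients $a^k\in\widehat{\partial}f_1(y^k)$ and $b^k\in\widehat{\partial}f_2(z^k)$, such that $\|v^k-a^k-b^k\|\le\varepsilon_k$. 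By construction $y^k,z^k\to\bar{x}$, while $f_1(y^k)\to f_1(\bar{x})$ follows automatically from continuity and $f_2(z^k)\to f_2(\bar{x})$ follows from the value estimate.

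Third, I would pass to the limit. Strict continuity of $f_1$ near $\bar{x}$ supplies a Lipschitz modulus $\kappa$, so every regular subgradient satisfies $\|a^k\|\le\kappa$ for $k$ large; extracting a subsequence gives $a^k\to a$ for some $a$. Since $y^k\xrightarrow[f_1]{}\bar{x}$ and $a^k\in\widehat{\partial}f_1(y^k)$, the definition of the limiting subdifferential yields $a\in\partial f_1(\bar{x})$. From $\|v^k-a^k-b^k\|\le\varepsilon_k$ together with $v^k\to v$ and $a^k\to a$, we obtain $b^k\to v-a$; combined with $z^k\xrightarrow[f_2]{}\bar{x}$ and $b^k\in\widehat{\partial}f_2(z^k)$, this gives $v-a\in\partial f_2(\bar{x})$. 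Therefore $v=a+(v-a)\in\partial f_1(\bar{x})+\partial f_2(\bar{x})$, which is the claimed inclusion.

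The main obstacle is the fuzzy sum rule invoked in the second step; it is the technical heart of the argument and itself rests on a variational/extremal-principle argument (or, in finite dimensions, a smooth variational description of regular subgradients followed by a penalization). I expect the role of strict continuity of $f_1$ to be twofold, and this is where care is needed: it both legitimizes the fuzzy decomposition in this mixed (Lipschitz plus l.s.c.) setting and, crucially, furnishes the uniform bound $\|a^k\|\le\kappa$ that provides the compactness needed to extract $a^k\to a$. Without such a bound the split pieces could fail to converge, and the limiting inclusion would not close up.
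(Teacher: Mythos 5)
The paper does not prove this lemma at all: it is quoted verbatim from the cited reference \cite{rockafellar2009variational} (it is the standard sum rule for limiting subdifferentials, e.g.\ Rockafellar--Wets, Theorem 10.9 together with Exercise 10.10), so there is no in-paper argument to compare against. Judged on its own, your proof is correct and follows the classical ``fuzzy sum rule plus compactness'' route: decompose each $v^k\in\widehat{\partial}f(x^k)$ approximately, use the local Lipschitz modulus of $f_1$ to bound the $\widehat{\partial}f_1$-pieces and extract a convergent subsequence, and close the limit. This is genuinely different from the route in the cited source, which derives the inclusion from the general addition rule under the horizon-subgradient qualification $\partial^\infty f_1(\bar{x})\cap(-\partial^\infty f_2(\bar{x}))=\{0\}$ (itself a consequence of the normal-cone intersection/chain-rule calculus); strict continuity of $f_1$ enters there only to force $\partial^\infty f_1(\bar{x})=\{0\}$, making the qualification automatic. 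Your approach buys a self-contained limit passage at the cost of importing the fuzzy sum rule, which you rightly flag as the technical heart; the textbook approach buys generality (it covers non-Lipschitz summands under the qualification condition) at the cost of the heavier normal-cone machinery. Two small points to tighten: the claim $f_2(z^k)\to f_2(\bar{x})$ needs not just the value estimate $|f_2(z^k)-f_2(x^k)|\le\varepsilon_k$ but also $f_2(x^k)\to f_2(\bar{x})$, which you should extract explicitly from $f(x^k)\to f(\bar{x})$ and the continuity of $f_1$; and you should state that ``strictly continuous at $\bar{x}$'' gives Lipschitz continuity on a whole neighborhood, so that the bound $\|a^k\|\le\kappa$ applies at the perturbed points $y^k$, not just at $\bar{x}$.
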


\section{Characteristics of local minimizers to problem \eqref{pro1}}

To facilitate the subsequent analysis, we provide the characterization of local minimizers of problem \eqref{pro1} under different cases.

\begin{proposition}\label{prop1}
	When $\lambda_2>0$ in problem \eqref{pro1}, for a vector $x^*\in\Omega$, the following two statements are equivalent.
	\begin{itemize}
		\item[$(i)$] $x^*$ is a local minimizer of problem \eqref{pro1}.
		\item[$(ii)$] $x^*$ is a global minimizer of function $f$ on set $\Omega_{\Gamma^c(x^*)}$, where $\Omega_{\Gamma^c(x^*)}=\{x\in \Omega: x_i=0,\ \forall i\in \Gamma^c(x^*)\}.$
	\end{itemize}
\end{proposition}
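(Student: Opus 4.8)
The plan is to prove the two implications separately, exploiting that near $x^*$ the penalty $\lambda_1\|x_+\|_0+\lambda_2\|x_-\|_0$ is locally constant on the coordinates in $\Gamma(x^*)$ and strictly increases as soon as any coordinate in $\Gamma^c(x^*)$ becomes nonzero. First I would record this local behavior precisely. For $i\in\Gamma(x^*)$ we have $x_i^*\neq 0$, so if $\delta<\min_{i\in\Gamma(x^*)}|x_i^*|$ then every $x\in B(x^*,\delta)$ keeps the same sign pattern on $\Gamma(x^*)$ and hence contributes exactly the same amount to the penalty as $x^*$. For $i\in\Gamma^c(x^*)$ we have $x_i^*=0$, and making $x_i$ positive costs $\lambda_1$ while making it negative costs $\lambda_2$; since $\lambda_1>0$ and $\lambda_2>0$, any such deviation raises the penalty by at least $m:=\min\{\lambda_1,\lambda_2\}>0$. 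Writing $P(x):=\lambda_1\|x_+\|_0+\lambda_2\|x_-\|_0$, this yields $P(x)=P(x^*)$ when $x\in B(x^*,\delta)\cap\Omega_{\Gamma^c(x^*)}$ and $P(x)\geq P(x^*)+m$ for the remaining $x\in B(x^*,\delta)\cap\Omega$.

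For $(ii)\Rightarrow(i)$, I would shrink $\delta$ further, using continuity of $f$, so that $|f(x)-f(x^*)|<m$ on $B(x^*,\delta)$, and then split $x\in B(x^*,\delta)\cap\Omega$ into two cases. If $x\in\Omega_{\Gamma^c(x^*)}$, then $P(x)=P(x^*)$ and the global minimality of $f$ on $\Omega_{\Gamma^c(x^*)}$ gives $F(x)=f(x)+P(x^*)\geq f(x^*)+P(x^*)=F(x^*)$. If instead some coordinate in $\Gamma^c(x^*)$ is nonzero, then $P(x)-P(x^*)\geq m$ dominates the controlled change $f(x)-f(x^*)>-m$, so $F(x)>F(x^*)$ again. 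Hence $x^*$ is a local minimizer.

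For $(i)\Rightarrow(ii)$, I would argue by contradiction, and this is where convexity of $f$ does the essential work of upgrading a local statement to a global one. Suppose $x^*$ is not a global minimizer of $f$ on $\Omega_{\Gamma^c(x^*)}$; pick $\tilde{x}\in\Omega_{\Gamma^c(x^*)}$ with $f(\tilde{x})<f(x^*)$ and consider the segment $x(t)=(1-t)x^*+t\tilde{x}$. Since $\Omega$ is convex and the coordinates in $\Gamma^c(x^*)$ remain zero along the segment, $x(t)\in\Omega_{\Gamma^c(x^*)}$ for all $t\in[0,1]$, and convexity gives $f(x(t))\leq(1-t)f(x^*)+tf(\tilde{x})<f(x^*)$ for every $t\in(0,1]$. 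For $t$ small enough the sign pattern on $\Gamma(x^*)$ is preserved while the $\Gamma^c(x^*)$ coordinates stay zero, so $P(x(t))=P(x^*)$ and therefore $F(x(t))<F(x^*)$ with $x(t)\to x^*$, contradicting local minimality.

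The main obstacle is the bookkeeping of the penalty term rather than any deep analysis: one must choose $\delta$ simultaneously small enough to freeze the sign pattern on $\Gamma(x^*)$ and to keep the oscillation of $f$ below the penalty jump $m$, and one must genuinely use $\lambda_2>0$ (not merely $\lambda_1>0$) so that a downward perturbation of a zero coordinate is also penalized. I would also verify the degenerate cases $\Gamma(x^*)=\emptyset$ (where the sign-preservation requirement is vacuous and $x^*=0$) and $\Gamma^c(x^*)=\emptyset$ (where $\Omega_{\Gamma^c(x^*)}=\Omega$ and the second case does not arise), both of which remain consistent with the stated equivalence.
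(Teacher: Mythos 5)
Your proposal is correct and follows essentially the same route as the paper: freeze the sign pattern of $x^*$ on a small ball so the penalty is constant on $\Omega_{\Gamma^c(x^*)}$ and jumps by at least $\min\{\lambda_1,\lambda_2\}$ off it, use continuity of $f$ for $(ii)\Rightarrow(i)$, and use convexity of $f$ and of $\Omega_{\Gamma^c(x^*)}$ to pass from local to global minimality in $(i)\Rightarrow(ii)$. The only difference is cosmetic: you spell out the convexity step via the segment $(1-t)x^*+t\tilde{x}$ and a contradiction, where the paper simply invokes the standard fact that a local minimizer of a convex function on a convex set is global.
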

\begin{proof}
For any given $x\in\mathbb{R}^n$, we define the notations
\begin{equation*}
	\Gamma^{+}(x)=\{i : x_i > 0\}\quad\mbox{and}\quad \Gamma^{-}(x)=\{i : x_i < 0\}.
	\end{equation*}
	There exists a $\delta>0$ such that for any $x\in B(x^*,\delta)$, it holds that
	\begin{equation}\label{susp}
	\Gamma^{+}(x^*)\subseteq \Gamma^{+}(x)\quad\mbox{and}\quad \Gamma^{-}(x^*)\subseteq \Gamma^{-}(x).
	\end{equation}
Therefore, for any $x\in\Omega_{\Gamma^{c}(x^*)}\cap B(x^*,\delta)$, we have
	\begin{equation}\label{pne}
	\lambda_1\|x^*_+\|_0+\lambda_2\|x^*_-\|_0
	=\lambda_1\|x_+\|_0+\lambda_2\|x_-\|_0.
	\end{equation}

$(i)$ $\Rightarrow$ $(ii)$ If $x^*$ is a local minimizer of problem \eqref{pro1}, there exists a $\bar{\delta}\in (0, \delta]$ such that for any $x\in \Omega\cap B(x^*,\bar{\delta})$,
\begin{equation*}
\begin{aligned}
f(x^*)+\lambda_1\|x^*_+\|_0+\lambda_2\|x^*_-\|_0 
\leq f(x)+\lambda_1\|x_+\|_0+\lambda_2\|x_-\|_0.
\end{aligned}
\end{equation*}
This together with \eqref{pne} means
$f(x^*)\leq f(x)$ for $\forall x\in \Omega_{\Gamma^c(x^*)}\cap B(x^*,\bar{\delta})$,
which implies that result $(ii)$ holds due to the convexity of function $f$ and set $\Omega_{\Gamma^c(x^*)}$.
	
\vspace{0.12cm}

$(ii)$ $\Rightarrow$ $(i)$ If $x^*$ is a global minimizer of function $f$ on set $\Omega_{\Gamma^c(x^*)}$, for any $x\in\Omega_{\Gamma^c(x^*)}\cap B(x^*,\delta)$, it follows that
\begin{equation*}
f(x^*)+\lambda_1\|x^*_+\|_0+\lambda_2\|x^*_-\|_0
\leq f(x)+\lambda_1\|x_+\|_0+\lambda_2\|x_-\|_0,
\end{equation*}
which holds due to \eqref{pne}.

From the continuity of function $f$, we know that there is a $\hat{\delta}\in (0, \delta]$ such that for any $x\in\Omega\cap B(x^*,\hat{\delta})$, it holds that
\begin{equation}\label{so1}
f(x^*)\leq f(x)+\min\{\lambda_1,\lambda_2\}.
\end{equation}
For any $x\in(\Omega\setminus\Omega_{\Gamma^c(x^*)})\cap B(x^*,\hat{\delta})$, in view of \eqref{susp}, we obtain 
\begin{equation}\label{so2}
\lambda_1\|x^*_+\|_0+\lambda_2\|x^*_-\|_0+\min\{\lambda_1, \lambda_2\}\leq \lambda_1\|x_+\|_0+\lambda_2\|x_-\|_0.
\end{equation}
Combininig \eqref{so1} and \eqref{so2}, for any $x\in(\Omega\setminus\Omega_{\Gamma^c(x^*)})\cap B(x^*,\hat{\delta})$, it yields that
\begin{equation*}
\begin{aligned}
f(x^*)+\lambda_1\|x^*_+\|_0+\lambda_2\|x^*_-\|_0 
\leq f(x)+\lambda_1\|x_+\|_0+\lambda_2\|x_-\|_0.
\end{aligned}
\end{equation*}
\end{proof}

According to the result of Proposition \ref{prop1}, if $\lambda_2>0$, it holds that
\begin{equation*}\label{opf}
\begin{aligned}
x^*\ \mbox{is\ a\ local\ minimizer\ of\ problem\ \eqref{pro1}}
 &\Leftrightarrow 
x^*\in\mathop{\argm}\limits_{x\in \Omega_{\Gamma^c(x^*)}} f(x) \\
&\Leftrightarrow 0\in[\nabla f(x^*)+\mathcal{N}_{\Omega}(x^*)]_{\Gamma(x^*)}.
\end{aligned}
\end{equation*}
By the definition of set $\Omega$, we have
$\mathcal{N}_{\Omega}(x^*)=\mathcal{N}_{\Omega_1}(x_1^*)\times\mathcal{N}_{\Omega_2}(x_2^*)\times\cdots\times\mathcal{N}_{\Omega_n}(x_n^*),$ where $\Omega_i=\{x_i\in\mathbb{R}:l_i\leq x_i\leq u_i\}$ for any $i\in[n]$. Hence, when $\lambda_2>0$, we have
\begin{equation*}
x^*\ \mbox{is\ a\ local\ minimizer\ of\ problem\ \eqref{pro1}}
\Longleftrightarrow 0\in[\nabla f(x^*)]_i +\mathcal{N}_{\Omega_i}(x_i^*),\  \forall i\in\Gamma(x^*).
\end{equation*}
Based on this fact, we introduce the definition of the $\epsilon$-local minimizer for problem \eqref{pro1} with  $\lambda_2>0$.

\begin{definition}\label{def2}
	For any $\epsilon>0$, we say that $x^*\in\Omega$ is an $\epsilon$-local minimizer of problem \eqref{pro1} with  $\lambda_2>0$ if it satisfies
	\begin{equation}\label{de2}
	0\in[\nabla f(x^*)]_i +\mathcal{N}_{\Omega_i}(x^*_i)+[-\epsilon, \epsilon],\quad \forall i\in\Gamma(x^*).
	\end{equation}
\end{definition}

If $\epsilon=0$ in \eqref{de2}, $x^*$ obviously is a local minimizer of problem \eqref{pro1} with $\lambda_2>0$.

\begin{proposition}\label{lamd2}
	When $\lambda_2=0$ in problem \eqref{pro1}, for a vector $x^*\in\Omega$, the following two statements are equivalent.
	\begin{itemize}
		\item[$(i)$] $x^*$ is a local minimizer of problem \eqref{pro1}.
		
		\item[$(ii)$] $x^*$ is a global minimizer of function $f$ on set $\Omega_{s^*}$, where 
		\begin{equation}\label{os}
		\Omega_{s^*}=\{x\in \Omega: x_i\leq 0,\ \forall i\in \Gamma^c(x^*)\}.
		\end{equation}
	\end{itemize}
\end{proposition}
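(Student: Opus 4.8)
The plan is to follow the same two-part scheme used for Proposition \ref{prop1}, adjusting for the fact that when $\lambda_2=0$ the penalty $\lambda_1\|x_+\|_0$ charges only the positive coordinates. First I would reuse the neighborhood inclusions \eqref{susp}: there is $\delta>0$ so that $\Gamma^{+}(x^*)\subseteq\Gamma^{+}(x)$ and $\Gamma^{-}(x^*)\subseteq\Gamma^{-}(x)$ for every $x\in B(x^*,\delta)$. The observation specific to this case is that on $\Omega_{s^*}\cap B(x^*,\delta)$ the positive support is frozen: the coordinates in $\Gamma^{+}(x^*)$ stay positive, those in $\Gamma^{-}(x^*)$ stay negative, and the defining constraint $x_i\le 0$ for $i\in\Gamma^c(x^*)$ forbids any vanishing coordinate from turning positive, so $\Gamma^{+}(x)=\Gamma^{+}(x^*)$ and hence $\lambda_1\|x_+\|_0=\lambda_1\|x^*_+\|_0$ throughout $\Omega_{s^*}\cap B(x^*,\delta)$. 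I would also note that $\Omega_{s^*}$ is again a box — the extra requirements merely replace the upper bound $u_i$ by $0$ for $i\in\Gamma^c(x^*)$ — so it is closed and convex, which will let me upgrade local to global minimality.

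For $(i)\Rightarrow(ii)$, assuming $x^*$ is a local minimizer of $F$, I would shrink to some $\bar{\delta}\in(0,\delta]$ so that $F(x^*)\le F(x)$ on $\Omega\cap B(x^*,\bar{\delta})$; restricting to $\Omega_{s^*}$ and cancelling the now-equal penalty terms yields $f(x^*)\le f(x)$ for all $x\in\Omega_{s^*}\cap B(x^*,\bar{\delta})$. Since $f$ is convex and $\Omega_{s^*}$ is convex, this local inequality promotes to global minimality of $f$ over $\Omega_{s^*}$, giving $(ii)$.

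For $(ii)\Rightarrow(i)$, I would split a small ball into two regions. On $\Omega_{s^*}\cap B(x^*,\delta)$ the penalty equality together with $f(x^*)\le f(x)$ from $(ii)$ already gives $F(x^*)\le F(x)$. On the complement $(\Omega\setminus\Omega_{s^*})\cap B(x^*,\hat{\delta})$, at least one coordinate $i\in\Gamma^c(x^*)$ is strictly positive, so combined with $\Gamma^{+}(x^*)\subseteq\Gamma^{+}(x)$ one gets $\|x_+\|_0\ge\|x^*_+\|_0+1$, i.e. a penalty jump of at least $\lambda_1$. Using continuity of $f$ I would choose $\hat{\delta}\in(0,\delta]$ with $f(x^*)\le f(x)+\lambda_1$ on $\Omega\cap B(x^*,\hat{\delta})$; adding $\lambda_1\|x^*_+\|_0$ to both sides and absorbing the surplus $\lambda_1$ into the penalty jump gives $F(x^*)\le F(x)$ on this region as well, so $x^*$ is a local minimizer.

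The main obstacle — and where this differs substantively from Proposition \ref{prop1} — is the complement region in $(ii)\Rightarrow(i)$. Because negative perturbations of the vanishing coordinates are unpenalized when $\lambda_2=0$, the correct feasible set to test against is $\Omega_{s^*}$ with the one-sided constraint $x_i\le 0$ rather than the equality $x_i=0$, and the penalty gap that must dominate the continuous variation of $f$ is exactly $\lambda_1$ (a single positive activation) instead of $\min\{\lambda_1,\lambda_2\}$. Verifying that this single-coordinate jump suffices, and that no admissible direction in $\Omega_{s^*}$ can lower the penalty below $\lambda_1\|x^*_+\|_0$, is the crux; the remaining steps are a routine adaptation of the earlier argument.
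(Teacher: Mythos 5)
Your proposal is correct and follows essentially the same route as the paper: the frozen-penalty identity $\lambda_1\|x_+\|_0=\lambda_1\|x^*_+\|_0$ on $\Omega_{s^*}\cap B(x^*,\delta)$ via \eqref{susp}, cancellation plus convexity for $(i)\Rightarrow(ii)$, and for $(ii)\Rightarrow(i)$ the split into $\Omega_{s^*}$ versus its complement with the single-activation penalty jump of $\lambda_1$ dominating the continuous variation of $f$. No gaps.
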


\begin{proof}
	When $\lambda_2=0$, problem \eqref{pro1} is reduced to the following form
	\begin{equation*}
	\min_{x\in\Omega} f(x)+\lambda_1\|x_+\|_0.
	\end{equation*}
	There exists a constant $\delta>0$ such that \eqref{susp} holds for any $x\in B(x^*,\delta)$.
	Therefore, for any $x\in\Omega_{s^*}\cap B(x^*,\delta)$, we have
	\begin{equation}\label{2pne}
	\lambda_1\|x^*_+\|_0=\lambda_1\|x_+\|_0.
	\end{equation}
	
	$(i)$ $\Rightarrow$ $(ii)$ If $x^*$ is a local minimizer of problem \eqref{pro1}, there is a $\bar{\delta}\in (0, \delta]$ satisfying
	\begin{equation*}
	\begin{aligned}
	f(x^*)+\lambda_1\|x^*_+\|_0
	\leq f(x)+\lambda_1\|x_+\|_0,\quad\forall x\in \Omega\cap B(x^*,\bar{\delta}).
	\end{aligned}
	\end{equation*}
	This together with \eqref{2pne} means that $f(x^*)\leq f(x)$ for any $x\in \Omega_{s^*}\cap B(x^*,\bar{\delta}).$
	
	\vspace{0.12cm}
	
	$(ii)$ $\Rightarrow$ $(i)$ If $x^*$ is a global minimizer of function $f$ on set $\Omega_{s^*}$, we have $f(x^*)\leq f(x)$ for any $x\in \Omega_{s^*}$. Then in view of \eqref{2pne}, it follows that
	\begin{equation*}\label{sfx}
	f(x^*)+\lambda_1\|x^*_+\|_0
	\leq f(x)+\lambda_1\|x_+\|_0,\quad \forall x\in\Omega_{s^*}\cap B(x^*,\delta).
	\end{equation*}
	
	Moreover, by the continuity of function $f$, there exists a $\hat{\delta}\in (0, \delta]$ such that
	\begin{equation}\label{2so1}
	f(x^*)\leq f(x)+\lambda_1,\quad\forall x\in\Omega\cap B(x^*,\hat{\delta}).
	\end{equation}
	Based on \eqref{susp}, for any $x\in(\Omega\setminus\Omega_{s^*})\cap B(x^*,\hat{\delta})$, we observe that 
	$\lambda_1\|x^*_+\|_0+\lambda_1\leq \lambda_1\|x_+\|_0,$ which together with \eqref{2so1} implies that
	\begin{equation*}
	\begin{aligned}
	f(x^*)+\lambda_1\|x^*_+\|_0 
	\leq f(x)+\lambda_1+\lambda_1\|x^*_+\|_0
	\leq f(x)+\lambda_1\|x_+\|_0.
	\end{aligned}
	\end{equation*}
	Then, $x^*$ is a local minimizer of problem \eqref{pro1}.
\end{proof}

\begin{remark}
	According to the proof of Proposition \ref{lamd2}, it is obvious that the conclusion in Proposition \ref{lamd2} also holds for $\Omega^{s^*}:=\{x\in \Omega: x_i\leq 0,\ \mbox{if}\ x_i^*\leq 0\}$.
\end{remark}

By the definition of $\Omega_{s^*}$ in \eqref{os}, we have $\mathcal{N}_{\Omega_{s^*}}(x^*)=\mathcal{N}_{[{\Omega_{s^*}}]_1}(x_1^*)\times\cdots\times\mathcal{N}_{[{\Omega_{s^*}}]_n}(x_n^*)$, where $[{\Omega_{s^*}}]_i$ equals to $\Omega_i$ if $i\in \Gamma(x^*)$ and $[l_i, 0]$ otherwise.
Based on the form of the normal cone, for any $i\in [n]$, we have
\begin{equation*}\label{ns}
\mathcal{N}_{[{\Omega_{s^*}}]_i}(x_i^*)=\left\{
\begin{aligned}
&\mathbb{R},&\ &\mbox{if}\ x^*_i=0\ \mbox{and}\ l_i=0,& \\
&\mathbb{R}_+,&\ &\mbox{if}\ x^*_i=0\ \mbox{and}\ l_i<0,& \\
&\tilde{\mathcal{N}}_i,&\ &\mbox{if}\ x^*_i\neq 0,&
\end{aligned}\right.
\end{equation*}
where $\tilde{\mathcal{N}}_i:=\{y_i\in\mathbb{R}:\langle y_i, x_i-x_i^* \rangle\leq 0,\ \forall x_i\in [l_i, u_i] \}.$
Combining this and the equivalence relationship established in Proposition \ref{lamd2}, we define a class of $\epsilon$-local minimizers of problem \eqref{pro1} for the case that $\lambda_2=0$.

\begin{definition}\label{def3}
For any $\epsilon>0$, we say that $x^*\in\Omega$ is an $\epsilon$-local minimizer of problem \eqref{pro1} with  $\lambda_2=0$ if it satisfies
	\begin{equation*}\label{de3}
	0\in[\nabla f(x^*)]_i +\mathcal{N}_{[\Omega_{s^*}]_i}(x^*_i)+[-\epsilon, \epsilon],\quad \forall i\in\Gamma(x^*)\cup J_{l}^*,
	\end{equation*}
where $\Omega_{s^*}$ is defined as in \eqref{os} and $J_{l}^*= \{j\in [n]:x^*_j=0\ \mbox{and}\ l_j<0\}$.
\end{definition}

\section{Algorithm and its convergence analysis}\label{sec7}

Inspired by the works in \cite{Adly2020finite,Adly2022First}, we develop an extrapolated hard thresholding algorithm with dry friction and Hessian-driven damping to solve problem \eqref{pro1} by discretizing the following inertial gradient system:
\begin{equation}\label{in1}
\begin{aligned}
0\in \ddot{x}(t)&+\gamma\dot{x}(t)+\epsilon\partial\|\dot{x}(t)\|_1+\beta\nabla^2 f(x(t))\dot{x}(t) \\
&+\nabla f(x(t))+\lambda_1\partial \|x(t)_+\|_0+\lambda_2\partial \|x(t)_-\|_0,
\end{aligned}
\end{equation}
where $\gamma>0$ and $\beta\geq0$. $\dot{x}(t)$ represents the viscous damping, introduced by Polyak \cite{Polyak1964S} to accelerate the gradient method. The term $\epsilon\partial\|\dot{x}(t)\|_1$ models dry friction. The potential friction function $\epsilon\|\cdot\|_1$ with $\epsilon>0$ is a convex function and exhibits a sharp minimum at the origin, fulfilling the dry friction property. 
$\nabla^2 f(x(t))\dot{x}(t)$ in \eqref{in1} represents the geometric damping, which can be interpreted as the derivative of $\nabla f(x(t))$ with respect to time $t$. This explains the emergence of a first-order algorithm by the time discretization of the system.
The Hessian-driven damping plays a crucial role in neutralizing oscillations within the inertial system.

We discretize the differential inclusion \eqref{in1} implicitly with respect to the nonsmooth functions $\|\cdot\|_1$ and $\|(\cdot)_+\|_0$, and explicitly with respect to the smooth function $f$. Taking a fixed time step size $h>0$, we obtain
\begin{equation}\label{dis}
\begin{aligned}
0\in \frac{1}{h^2}&(x^{k+1}-2x^k+x^{k-1})+\frac{\gamma}{h}(x^{k+1}-x^k)+\frac{\beta}{h}\left(\nabla f(x^k)-\nabla f(x^{k-1})\right) \\
&+\nabla f(x^k) +\epsilon \partial \left\|\frac{1}{h}(x^{k+1}-x^k)\right\|_1+\lambda_1 \partial\|x^{k+1}_+\|_0+\lambda_2 \partial\|x^{k+1}_-\|_0.
\end{aligned}
\end{equation}
Solving \eqref{dis} with respect to $x^{k+1}$, we derive Algorithm \ref{alg1}.

To ensure that the iterates $\{x^k\}$ generated by the proposed algorithm remain within the constraint set $\Omega$, we define the following set
\begin{equation*}
\Omega^k=\left\{x\in\mathbb{R}^n: x_i\in\left[l^k_i,\ u^k_i\right], \forall i\in[n]\right\},
\end{equation*}
where $l^k_i=\frac{l_i-x_i^k}{h}$ and $u^k_i=\frac{u_i-x_i^k}{h}$ for all $i\in [n]$.
For convenience, we define the function
\begin{equation*}
\begin{aligned}
Q(x,z,y)=\frac{h\epsilon}{1+h\gamma} \|y\|_1&+\frac{\lambda_1}{1+h\gamma}\|(x+hy)_+\|_0 \\ &+\frac{\lambda_2}{1+h\gamma}\|(x+hy)_-\|_0+\frac{1}{2}\|y-z\|^2
\end{aligned}
\end{equation*}
and denote
\begin{equation}\label{Yk}
w^{k+1}=\frac{x^{k+1}-x^k}{h}.
\end{equation}

\begin{algorithm}
	\caption{Extrapolated Hard Thresholding Algorithm with Hessian-driven Damping and Dry Friction}\label{alg1}
	\begin{algorithmic}[1]
		\Require Take initial points $x^{-1}=x^0\in\Omega$. Choose parameters $\epsilon\geq0$, $\beta\geq0$, $h>0$ and $\gamma>0$ satisfying $\gamma\geq \frac{1}{h}+(2\beta+h)L_f$. Set $k=0$.
		\While{a termination criterion is not met,}
		\State \textbf{Step 1:} Compute
		\begin{align}
		&v^k=\frac{1}{h(1+h\gamma)}(x^k-x^{k-1})-\frac{\beta}{1+h\gamma}(\nabla f(x^k)-\nabla f(x^{k-1})), \label{al}\\
		&z^k=v^k-\frac{h}{1+h\gamma}\nabla f(x^k),\label{al2}\\ 
		&x^{k+1}\in x^k+h\cdot\arg\min_{y\in\Omega^k}Q(x^k,z^k,y).\label{al1}
		\end{align}
		\State \textbf{Step 2:} Increment $k$ by one and return to \textbf{Step 1}.
		\EndWhile
	\end{algorithmic}
\end{algorithm}

It is not difficult to observe that Algorithm \ref{alg1} shares a structure similar to classical proximal gradient algorithms. Specifically, it involves the proximal operator of the nonsmooth function and the gradient of the smooth function in the optimization problem.
However, there are two key differences between the two algorithms. First, the proximal mapping in Algorithm \ref{alg1} is defined for the sum of the function $\lambda\|(\cdot)_+\|_0$ and the dry friction function $\epsilon\|\cdot\|_1$. This reflects the implicitly discrete nature of the dry friction function, which plays a crucial role in ensuring the finite length of the sequence generated by the algorithm. Second, unlike the classical proximal gradient algorithm, which relies solely on the gradient information at the previous point $x^k$, the proposed algorithm incorporates the difference between gradients at consecutive iterates $x^k$ and $x^{k-1}$. This modification is achieved through the Hessian-driven damping term.

From \eqref{al1}, it is evident that for any $k\in\mathbb{N}$, the following inequality holds
\begin{equation}\label{op1}
Q(x^k,z^k,w^{k+1})\leq Q(x^k,z^k,y),\quad\forall y\in\Omega^k.
\end{equation}

\begin{lemma}\label{lema1}
	Denote $\{x^k\}$ the sequence generated by Algorithm \ref{alg1}, then for any $k\in \mathbb{N}$, we have
	\begin{equation}\label{eq1}
	\begin{aligned}
	&\lambda_1\|x^k_+\|_0+\lambda_2\|x^k_-\|_0 - \left[\lambda_1\|x_+^{k+1}\|_0+\lambda_2\|x^{k+1}_-\|_0\right] \\
	\geq&h\epsilon\|w^{k+1}\|_1 +\frac{1}{2}(h\gamma-1)\|w^{k+1}\|^2+\langle w^{k+1}, w^{k+1}-w^{k} \rangle \\
	&\quad+\beta\langle w^{k+1},\nabla f(x^k)-\nabla f(x^{k-1})\rangle+h\langle w^{k+1}, \nabla f(x^k) \rangle.
	\end{aligned}
	\end{equation}
\end{lemma}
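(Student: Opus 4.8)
The plan is to exploit the optimality inequality \eqref{op1} with the single, judiciously chosen test point $y=0$. First I would verify that $0$ is feasible, i.e. $0\in\Omega^k$: since $x^k\in\Omega$ we have $l_i\le x_i^k\le u_i$, whence $l_i^k=\frac{l_i-x_i^k}{h}\le 0\le \frac{u_i-x_i^k}{h}=u_i^k$ for every $i\in[n]$, so indeed $0\in\Omega^k$. Then \eqref{op1} specializes to $Q(x^k,z^k,w^{k+1})\le Q(x^k,z^k,0)$. Observing that $x^k+hw^{k+1}=x^{k+1}$ by \eqref{Yk}, while $x^k+h\cdot 0=x^k$, this inequality reads
\begin{equation*}
\begin{aligned}
&\frac{h\epsilon}{1+h\gamma}\|w^{k+1}\|_1+\frac{\lambda_1}{1+h\gamma}\|x^{k+1}_+\|_0+\frac{\lambda_2}{1+h\gamma}\|x^{k+1}_-\|_0+\frac{1}{2}\|w^{k+1}-z^k\|^2 \\
&\qquad\le \frac{\lambda_1}{1+h\gamma}\|x^k_+\|_0+\frac{\lambda_2}{1+h\gamma}\|x^k_-\|_0+\frac{1}{2}\|z^k\|^2.
\end{aligned}
\end{equation*}

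Next I would rearrange, isolating the cardinality terms on the left, and simplify the quadratic part via the elementary identity $\frac{1}{2}\|w^{k+1}-z^k\|^2-\frac{1}{2}\|z^k\|^2=\frac{1}{2}\|w^{k+1}\|^2-\langle w^{k+1},z^k\rangle$. After multiplying through by the positive factor $1+h\gamma$ this yields
\begin{equation*}
\begin{aligned}
&\lambda_1\|x^k_+\|_0+\lambda_2\|x^k_-\|_0-\left[\lambda_1\|x^{k+1}_+\|_0+\lambda_2\|x^{k+1}_-\|_0\right] \\
&\qquad\ge h\epsilon\|w^{k+1}\|_1+\frac{1+h\gamma}{2}\|w^{k+1}\|^2-(1+h\gamma)\langle w^{k+1},z^k\rangle.
\end{aligned}
\end{equation*}

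The remaining work is to unfold $(1+h\gamma)z^k$ using the definitions \eqref{al} and \eqref{al2}: combining them gives $(1+h\gamma)z^k=\frac{1}{h}(x^k-x^{k-1})-\beta(\nabla f(x^k)-\nabla f(x^{k-1}))-h\nabla f(x^k)$, and since $\frac{1}{h}(x^k-x^{k-1})=w^k$ by \eqref{Yk} this is $(1+h\gamma)z^k=w^k-\beta(\nabla f(x^k)-\nabla f(x^{k-1}))-h\nabla f(x^k)$. Substituting the resulting expansion of $-(1+h\gamma)\langle w^{k+1},z^k\rangle$ produces exactly the $\beta$-gradient-difference term and the $h\langle w^{k+1},\nabla f(x^k)\rangle$ term of the claimed right-hand side, leaving the quadratic remainder $\frac{1+h\gamma}{2}\|w^{k+1}\|^2-\langle w^{k+1},w^k\rangle$.

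The only genuinely attention-requiring step — and where I expect bookkeeping slips to hide — is reconciling this quadratic remainder with the stated form $\frac{1}{2}(h\gamma-1)\|w^{k+1}\|^2+\langle w^{k+1},w^{k+1}-w^k\rangle$. Expanding the latter gives $\left(\frac{h\gamma-1}{2}+1\right)\|w^{k+1}\|^2-\langle w^{k+1},w^k\rangle=\frac{h\gamma+1}{2}\|w^{k+1}\|^2-\langle w^{k+1},w^k\rangle$, which coincides with the remainder above; this confirms \eqref{eq1}. There is no conceptual obstacle here — the whole argument is a one-shot application of the minimization property \eqref{op1} followed by careful algebra — so the main care is simply in tracking the $1+h\gamma$ factors and the $w^k$ substitution without error.
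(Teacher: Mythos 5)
Your proposal is correct and follows essentially the same route as the paper's proof: test $y=0$ in \eqref{op1} (justified by $x^k\in\Omega$), substitute $x^{k+1}=x^k+hw^{k+1}$, expand $z^k$ via \eqref{al}--\eqref{al2}, multiply by $1+h\gamma$, and reconcile the quadratic remainder with the identity $\frac{1}{2}(1+h\gamma)\|w^{k+1}\|^2-\langle w^{k+1},w^k\rangle=\frac{1}{2}(h\gamma-1)\|w^{k+1}\|^2+\langle w^{k+1},w^{k+1}-w^k\rangle$. The only difference is the order of the bookkeeping (you multiply by $1+h\gamma$ before unfolding $z^k$, the paper after), which is immaterial.
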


\begin{proof}
	For any $k\in\mathbb{N}$, we have $0\in \Omega^k$ by $x^k\in\Omega$.
	Using \eqref{op1}, it holds that $Q(x^k,z^k,w^{k+1})\leq Q(x^k,z^k,0)$, namely
	\begin{equation}\label{feq}
	\begin{aligned}
	&\frac{\lambda_1}{1+h\gamma}\|x^k_+\|_0 +\frac{\lambda_2}{1+h\gamma}\|x^k_-\|_0+\frac{1}{2}\|z^k\|^2\\
	\geq &\frac{h\epsilon}{1+h\gamma} \|w^{k+1}\|_1+\frac{\lambda_1}{1+h\gamma}\|(x^k+hw^{k+1})_+\|_0+\frac{\lambda_2}{1+h\gamma}\|(x^k+hw^{k+1})_-\|_0  \\
	&\qquad+\frac{1}{2}\|w^{k+1}-z^k\|^2.
	\end{aligned}
	\end{equation}	 
	From \eqref{Yk}, we have $x^{k+1}=x^k+hw^{k+1}$, which together with \eqref{feq} implies	 
	\begin{equation}\label{wfi}
	\begin{aligned}
	&\frac{\lambda_1}{1+h\gamma}\|x^k_+\|_0 +\frac{\lambda_2}{1+h\gamma}\|x^k_-\|_0\\
	\geq&\frac{h\epsilon}{1+h\gamma} \|w^{k+1}\|_1+\frac{\lambda_1}{1+h\gamma}\|x^{k+1}_+\|_0
	+\frac{\lambda_2}{1+h\gamma}\|x^{k+1}_-\|_0 +\frac{1}{2}\|w^{k+1}\|^2-\langle w^{k+1}, z^k \rangle.
	\end{aligned}
	\end{equation}	 
	
	By the definitions of $z^k$ and $w^k$, it follows that
	\begin{equation}\label{ns2}
	\begin{aligned}
	\langle w^{k+1}, z^k\rangle=&\langle w^{k+1}, v^k-\frac{h}{1+h\gamma}\nabla f(x^k)\rangle \\
	=&\frac{1}{1+h\gamma}\langle w^{k+1}, w^{k} \rangle
	-\frac{h}{1+h\gamma}\langle w^{k+1}, \nabla f(x^k)\rangle \\
	&\quad -\frac{\beta}{1+h\gamma}\langle w^{k+1}, \nabla f(x^k)-\nabla f(x^{k-1}) \rangle.
	\end{aligned}
	\end{equation}
	Plugging \eqref{ns2} into \eqref{wfi}, and then multiplying it by $(1+h\gamma)$, we obtain
	\begin{equation}\label{ns4}
	\begin{aligned}
	&\lambda_1\|x^k_+\|_0 +\lambda_2\|x^k_-\|_0 \\
	\geq& h\epsilon \|w^{k+1}\|_1+\lambda_1\|x^{k+1}_+\|_0+\lambda_2\|x^{k+1}_-\|_0+\frac{1}{2}(1+h\gamma)\|w^{k+1}\|^2-\langle w^{k+1}, w^{k} \rangle  \\
	&\qquad+h\langle w^{k+1}, \nabla f(x^k)\rangle +\beta\langle w^{k+1}, \nabla f(x^k)-\nabla f(x^{k-1}) \rangle.
	\end{aligned}
	\end{equation}
	
	We note that
	\begin{equation}\label{se}
	\begin{aligned}
	\frac{1}{2}(1+h\gamma)\|w^{k+1}\|^2-\langle w^{k+1}, w^{k} \rangle 
	= \frac{1}{2}(h\gamma -1)\|w^{k+1}\|^2+\langle w^{k+1}, w^{k+1}- w^{k} \rangle,
	\end{aligned}
	\end{equation}
	which together with \eqref{ns4} derives \eqref{eq1}.
\end{proof}

Building upon the above lemma, we establish one of main conclusions of this paper. For simplicity, we define the sequence $\{E_k\}_{k\in\mathbb{N}}$ as follows:
\begin{equation}\label{ek}
E_k:=\frac{1}{2h}\beta L_f \|x^k-x^{k-1}\|^2 + \frac{1}{2}\left\| \frac{x^{k}-x^{k-1}}{h}\right\|^2 + F(x^k)-\inf_{\Omega}F,
\end{equation}
which acts as the energy function in the analysis.

\begin{theorem}\label{th1}
	Let $\{x^k\}$ be the sequence generated by Algorithm \ref{alg1} with $\epsilon>0$. Then, it holds that
	\begin{equation}\label{sumk}
	\sum_{k=0}^{\infty} \|x^{k+1}-x^k\| \leq\frac{1}{\epsilon}\left(F(x^0)-\inf_{\Omega}F\right)< +\infty.
	\end{equation}
\end{theorem}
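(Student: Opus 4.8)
The plan is to exhibit the energy sequence $\{E_k\}$ defined in \eqref{ek} as a Lyapunov function whose successive decrements dominate $\epsilon\|x^{k+1}-x^k\|$, and then telescope. Concretely, I would aim to prove the one-step descent inequality
\begin{equation*}
E_k - E_{k+1} \geq \epsilon\|x^{k+1}-x^k\|, \quad \forall k\in\mathbb{N},
\end{equation*}
after which summing over $k=0,\dots,N$ and using $E_{N+1}\geq 0$ (each summand of $E_{N+1}$ is nonnegative) yields $\sum_{k=0}^N\epsilon\|x^{k+1}-x^k\|\leq E_0$. Since the initialization $x^{-1}=x^0$ forces the first two terms of $E_0$ to vanish, $E_0 = F(x^0)-\inf_\Omega F$, and letting $N\to\infty$ gives \eqref{sumk}; the bound is finite because $\inf_\Omega F>-\infty$.

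To obtain the descent inequality, I would start from Lemma \ref{lema1} and combine it with the standard descent lemma for the $L_f$-smooth function $f$, namely $f(x^k)-f(x^{k+1}) \geq -h\langle w^{k+1},\nabla f(x^k)\rangle - \tfrac{L_f h^2}{2}\|w^{k+1}\|^2$ (recall $x^{k+1}-x^k = h w^{k+1}$). Adding this to \eqref{eq1} produces a lower bound for $F(x^k)-F(x^{k+1})$ in which the two occurrences of $h\langle w^{k+1},\nabla f(x^k)\rangle$ cancel exactly; this cancellation reflects how the explicit gradient step and the implicit proximal step are designed to fit together. What remains on the right-hand side is $h\epsilon\|w^{k+1}\|_1$, the quadratic $\tfrac{1}{2}(h\gamma-1)\|w^{k+1}\|^2 - \tfrac{L_f h^2}{2}\|w^{k+1}\|^2$, the inner product $\langle w^{k+1},w^{k+1}-w^k\rangle$, and the Hessian-damping cross term $\beta\langle w^{k+1},\nabla f(x^k)-\nabla f(x^{k-1})\rangle$.

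The two cross terms I would dispose of as follows. For $\langle w^{k+1},w^{k+1}-w^k\rangle$ I use the polarization identity $\langle a,a-b\rangle = \tfrac12\|a\|^2 - \tfrac12\|b\|^2 + \tfrac12\|a-b\|^2 \geq \tfrac12\|w^{k+1}\|^2 - \tfrac12\|w^k\|^2$. For the $\beta$-term I bound $\|\nabla f(x^k)-\nabla f(x^{k-1})\| \leq L_f h\|w^k\|$ by $L_f$-Lipschitzness and then apply Cauchy--Schwarz together with Young's inequality to get the lower bound $-\tfrac{\beta L_f h}{2}\|w^{k+1}\|^2 - \tfrac{\beta L_f h}{2}\|w^k\|^2$. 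Rewriting $E_k$ as $(\tfrac{\beta L_f h}{2}+\tfrac12)\|w^k\|^2 + F(x^k)-\inf_\Omega F$ and forming $E_k-E_{k+1}$, I expect the $\|w^k\|^2$ contributions from the two cross-term estimates to cancel precisely against those coming from the energy, leaving $E_k-E_{k+1}\geq h\epsilon\|w^{k+1}\|_1 + \tfrac12\big[h(\gamma - 2\beta L_f - L_f h)-1\big]\|w^{k+1}\|^2$.

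The main obstacle, and the crux of the argument, is showing the quadratic coefficient is nonnegative so that the quadratic remainder can simply be discarded. This is exactly where the parameter condition $\gamma \geq \tfrac1h + (2\beta+h)L_f$ imposed in Algorithm \ref{alg1} enters: it is equivalent to $h(\gamma-2\beta L_f - L_f h) - 1 \geq 0$, making that coefficient nonnegative. Finally, using the norm inequality $\|w^{k+1}\|_1 \geq \|w^{k+1}\|$ and $h\|w^{k+1}\| = \|x^{k+1}-x^k\|$ gives $E_k-E_{k+1}\geq h\epsilon\|w^{k+1}\|_1 \geq \epsilon\|x^{k+1}-x^k\|$, completing the descent step and hence the theorem.
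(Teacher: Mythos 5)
Your proposal is correct and follows essentially the same route as the paper's proof: both combine Lemma \ref{lema1} with the descent lemma for $f$, bound the inertial cross term via the polarization identity and the Hessian-damping term via Lipschitzness plus Young's inequality, use the condition $\gamma\geq\frac{1}{h}+(2\beta+h)L_f$ to make the quadratic remainder nonnegative, and telescope the resulting decrease $E_k-E_{k+1}\geq\epsilon\|x^{k+1}-x^k\|$ of the energy \eqref{ek} with $E_0=F(x^0)-\inf_{\Omega}F$. The only cosmetic difference is that the paper retains the nonnegative term $\frac{1}{2h^2}\|x^{k+1}-2x^k+x^{k-1}\|^2$ a bit longer before discarding it.
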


\begin{proof}
	In view of the Lipschitz continuity of $\nabla f$ and the Cauchy-Schwarz inequality, we have
	\begin{equation*}\label{s4}
	\begin{aligned}
	f(x^{k+1})-f(x^k)-\frac{1}{2}L_f\|x^{k+1}-x^k\|^2 
	&\leq \left\langle \nabla f(x^k),\ x^{k+1}-x^k\right\rangle\\
	&=h\left\langle \nabla f(x^k),\ w^{k+1}\right\rangle
	\end{aligned}
	\end{equation*}
	and
	\begin{equation*}\label{s5}
	\begin{aligned}
	\left| \left\langle\nabla f(x^k)-\nabla f(x^{k-1}),\ w^{k+1}\right\rangle \right|
	& \leq \|\nabla f(x^k)-\nabla f(x^{k-1})\|\cdot\|w^{k+1}\| \\
	& \leq L_fh\|w^{k}\|\cdot\|w^{k+1}\|  \\
	&\leq \frac{1}{2}L_fh\left(\|w^{k}\|^2 + \|w^{k+1}\|^2\right).
	\end{aligned}
	\end{equation*}
	Moreover, it holds that
	\begin{equation*}\label{s3}
	\left\langle w^{k+1}-w^{k},\ w^{k+1}\right\rangle = \frac{1}{2}\|w^{k+1}\|^2 - \frac{1}{2}\|w^{k}\|^2 + \frac{1}{2}\|w^{k+1}-w^{k}\|^2.
	\end{equation*}
Collecting the above results and plugging them into \eqref{eq1}, it yields
	\begin{equation*}
	\begin{aligned}
	&\lambda_1\|x^k_+\|_0 +\lambda_2\|x^k_-\|_0-\left[\lambda_1\|x^{k+1}_+\|_0+\lambda_2\|x^{k+1}_-\|_0\right]  \\
	\geq&  h\epsilon\|w^{k+1}\|_1 +\frac{1}{2}(h\gamma-1)\|w^{k+1}\|^2+\frac{1}{2}\|w^{k+1}\|^2
	- \frac{1}{2}\|w^{k}\|^2 \\
	&\quad + \frac{1}{2}\|w^{k+1}-w^{k}\|^2- \frac{1}{2}\beta h L_f\|w^{k}\|^2
	- \frac{1}{2}\beta hL_f\|w^{k+1}\|^2 \\
	&\quad +f(x^{k+1})-f(x^k)-\frac{1}{2}L_f\|x^{k+1}-x^k\|^2,
	\end{aligned}
	\end{equation*}
	which means that
		\begin{equation}\label{eq22}
	\begin{aligned}
	&\lambda_1\|x^k_+\|_0 +\lambda_2\|x^k_-\|_0-\left[\lambda_1\|x^{k+1}_+\|_0+\lambda_2\|x^{k+1}_-\|_0\right]  \\
	\geq&\epsilon \|x^{k+1}-x^k\|_1 + \frac{1}{2}(\frac{\gamma}{h}-\frac{1}{h^2}-\frac{\beta }{h}L_f-L_f)\|x^{k+1}-x^k\|^2 \\
	&\quad+\frac{1}{2}\|w^{k+1}\|^2- \frac{1}{2}\|w^{k}\|^2+\frac{1}{2h^2}\|x^{k+1}-2x^k+x^{k-1}\|^2 \\
	&\quad -\frac{1}{2h}\beta L_f\|x^k-x^{k-1}\|^2+f(x^{k+1})-f(x^k).
	\end{aligned}
	\end{equation}
	Since $\gamma\geq\frac{1}{h}+(2\beta+h)L_f$, it follows that $\frac{1}{2}(\frac{\gamma}{h}-\frac{1}{h^2}-\frac{\beta}{h}L_f-L_f)\geq \frac{1}{2h}\beta L_f$.
	Using this inequality together with the definition of $E_k$ in \eqref{ek} and \eqref{eq22}, we obtain
	\begin{equation}\label{ek1}
	\epsilon \|x^{k+1}-x^k\|_1+\frac{1}{2h^2}\|x^{k+1}-2x^k+x^{k-1}\|^2+E_{k+1}-E_k\leq 0.
	\end{equation}
	Then we deduce that $E_{k+1}\leq E_k$, which together with $E_k\geq 0$ implies that $\lim_{k\to\infty} E_k$ exists. Based on \eqref{ek1}, we have
	\begin{equation*}
	\epsilon \|x^{k+1}-x^k\|\leq \epsilon \|x^{k+1}-x^k\|_1 \leq E_k-E_{k+1}.
	\end{equation*}
	After summation of this inequality with respect to $k\in\mathbb{N}$, then using $\epsilon>0$ and the existence of $\lim_{k\to\infty} E_k$, we obtain that \eqref{sumk} holds.
\end{proof}

\begin{theorem}\label{th2}
	Suppose that $\{x^k\}$ is the sequence generated by Algorithm \ref{alg1} with $\epsilon>0$. Then, there exists $x^{\infty}\in\Omega$ such that $\lim_{k\to\infty}x^k=x^{\infty}$
	and $x^{\infty}$ is an $\epsilon$-local minimizer of problem \eqref{pro1}.
\end{theorem}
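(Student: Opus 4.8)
The plan is to get convergence of the whole sequence for free from Theorem~\ref{th1}, and then to pass to the limit in the first-order optimality condition of the scalar subproblems that define $w^{k+1}$, handling the indices where the Heaviside term is \emph{inactive} at the limit separately from those where it is \emph{active}. First I would note that Theorem~\ref{th1} gives $\sum_k\|x^{k+1}-x^k\|<\infty$, so $\{x^k\}$ is Cauchy and converges to some $x^{\infty}$; since each $x^k\in\Omega$ and $\Omega$ is closed, $x^{\infty}\in\Omega$. In particular $w^{k+1}=(x^{k+1}-x^k)/h\to 0$. Using the continuity of $\nabla f$ together with $x^k,x^{k-1}\to x^{\infty}$ in \eqref{al}--\eqref{al2}, I would show $v^k\to 0$ and hence $z^k\to -\tfrac{h}{1+h\gamma}\nabla f(x^{\infty})$. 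A neighborhood argument of the type used in \eqref{susp} then freezes the signs on $\Gamma(x^{\infty})$: for all large $k$, $\Gamma^{+}(x^{\infty})\subseteq\Gamma^{+}(x^k)$ and $\Gamma^{-}(x^{\infty})\subseteq\Gamma^{-}(x^k)$, and likewise for $x^{k+1}$.

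Next I would exploit separability. Since $Q(x^k,z^k,\cdot)$ and $\Omega^k$ are separable, \eqref{al1}--\eqref{op1} say that each coordinate $w_i^{k+1}$ globally minimizes, over $[l_i^k,u_i^k]$, the scalar function
\[
\phi_i(y):=\frac{h\epsilon}{1+h\gamma}|y|+\frac{\lambda_1}{1+h\gamma}\mathbf{1}_{(0,\infty)}(x_i^k+hy)+\frac{\lambda_2}{1+h\gamma}\mathbf{1}_{(0,\infty)}\bigl(-(x_i^k+hy)\bigr)+\tfrac12(y-z_i^k)^2 .
\]
For $i\in\Gamma(x^{\infty})$ the identity $x_i^k+hw_i^{k+1}=x_i^{k+1}\to x_i^{\infty}\neq 0$ shows $w_i^{k+1}$ stays bounded away from the jump point $-x_i^k/h$, so both Heaviside terms are constant near $w_i^{k+1}$; hence $w_i^{k+1}$ is a local, therefore global (by convexity), minimizer of the convex part, giving $0\in\frac{h\epsilon}{1+h\gamma}\partial|w_i^{k+1}|+(w_i^{k+1}-z_i^k)+\mathcal{N}_{[l_i^k,u_i^k]}(w_i^{k+1})$. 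Writing the $\ell_1$-subgradient as $\xi_i^k\in[-1,1]$ and the normal-cone element as $\eta_i^k$, I would pass to the limit along a subsequence with $\xi_i^k\to\xi_i$; checking the three boundary situations $l_i<x_i^{\infty}<u_i$, $x_i^{\infty}=u_i$, $x_i^{\infty}=l_i$ shows $\eta_i^k\to\eta_i\in\mathcal{N}_{\Omega_i}(x_i^{\infty})$. Multiplying the limit identity by $(1+h\gamma)/h$ and using that $\mathcal{N}_{\Omega_i}(x_i^{\infty})$ is a cone yields $0\in[\nabla f(x^{\infty})]_i+[-\epsilon,\epsilon]+\mathcal{N}_{\Omega_i}(x_i^{\infty})$, i.e. \eqref{de2}, which settles the case $\lambda_2>0$ via Definition~\ref{def2}.

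For $\lambda_2=0$ the coordinates $i\in\Gamma(x^{\infty})$ are handled exactly as above (now $[\Omega_{s^*}]_i=\Omega_i$), and it remains to treat $i\in J_l^*$, where $x_i^{\infty}=0$ sits on the jump and no local-constancy is available. Here I would argue by contradiction using $w_i^{k+1}\to 0$: put $a:=\tfrac{h\epsilon}{1+h\gamma}$ and $z_i^{\infty}:=-\tfrac{h}{1+h\gamma}[\nabla f(x^{\infty})]_i$. If $z_i^{\infty}<-a$, then for large $k$ the unconstrained minimizer $z_i^k+a$ of the penalty-free branch $-ay+\tfrac12(y-z_i^k)^2$ is negative and bounded away from $0$, hence lies strictly left of the jump point $-x_i^k/h\to 0$; since the competing branch carries the strictly positive penalty $\lambda_1/(1+h\gamma)$, the global minimizer $w_i^{k+1}$ must equal either $z_i^k+a$ or the endpoint $l_i^k\to l_i/h<0$, in either case staying bounded away from $0$, contradicting $w_i^{k+1}\to 0$. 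Thus $z_i^{\infty}\ge -a$, i.e. $[\nabla f(x^{\infty})]_i\le\epsilon$, which is exactly $0\in[\nabla f(x^{\infty})]_i+\mathcal{N}_{[\Omega_{s^*}]_i}(0)+[-\epsilon,\epsilon]$ because $\mathcal{N}_{[\Omega_{s^*}]_i}(0)=\mathbb{R}_+$ for $i\in J_l^*$. Combining the two families of indices verifies Definition~\ref{def3}, completing the $\lambda_2=0$ case.

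The step I expect to be the main obstacle is the last one: unlike the $\Gamma(x^{\infty})$ coordinates, the indices in $J_l^*$ lie on the discontinuity of $\|(\cdot)_+\|_0$, so the subdifferential-plus-limit argument is unavailable and one must instead convert the finite-length conclusion $w^{k+1}\to 0$ into a quantitative rejection of the penalized branch. A secondary technical point is justifying the set-convergence $\mathcal{N}_{[l_i^k,u_i^k]}(w_i^{k+1})\to\mathcal{N}_{\Omega_i}(x_i^{\infty})$, which I would establish by the explicit boundary case analysis indicated above rather than invoking a general outer-semicontinuity result.
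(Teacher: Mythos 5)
Your proposal is correct, and for the bulk of the argument it coincides with the paper's proof: convergence of the whole sequence from Theorem~\ref{th1}, the limits $w^{k+1}\to 0$, $v^k\to 0$, $z^k\to-\tfrac{h}{1+h\gamma}\nabla f(x^{\infty})$, the identification $\mathcal{N}_{\Omega^k}(w^{k+1})=\mathcal{N}_{\Omega}(x^{k+1})$ (the paper's \eqref{omg}), and the passage to the limit in the coordinate-wise optimality conditions for $i\in\Gamma(x^{\infty})$, which yields \eqref{de2}. Your justification of the first-order condition on $\Gamma(x^{\infty})$ via local constancy of the Heaviside terms is an equivalent substitute for the paper's use of the limiting subdifferential formula \eqref{L0P} with $v_i=0$ on $\Gamma$. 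The genuine divergence is in the $\lambda_2=0$ case on the index set $J_l^{*}$: the paper extracts subsequences according to whether $x_j^{k+1}=0$ or $0\neq x_j^{k+1}\to 0$, applies \eqref{L0P} at the iterate (obtaining a nonnegative multiplier $a^{k+1}$ in the first case and a vanishing one in the second), absorbs it into $\mathcal{N}_{[\Omega_{s^{\infty}}]_j}(0)=\mathbb{R}_+$ via the inclusion \eqref{nse}, and passes to the limit; you instead run a branch-comparison contradiction on the explicit scalar subproblem, showing that $z_i^{\infty}<-\tfrac{h\epsilon}{1+h\gamma}$ would force $w_i^{k+1}=\max\{z_i^k+\tfrac{h\epsilon}{1+h\gamma},\,l_i^k\}$ to stay bounded away from zero (since the soft-threshold point lies left of the jump and the penalized branch lies entirely in the region where the convex part is nondecreasing), contradicting $w^{k+1}\to 0$, and then observe that $[\nabla f(x^{\infty})]_i\le\epsilon$ is exactly the membership in Definition~\ref{def3} because $\mathcal{N}_{[\Omega_{s^{*}}]_i}(0)=\mathbb{R}_+$. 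Your variant is more elementary in that it never invokes the subdifferential of $\|(\cdot)_+\|_0$ at a point of discontinuity and avoids the subsequence bookkeeping, at the cost of relying on the one-dimensional closed form of the prox subproblem (so it is less portable to nonseparable regularizers); the paper's route is more mechanical and extends verbatim to the perturbed Algorithm~\ref{alg3}. Both deliver the same conclusion, and I see no gap in either.
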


\begin{proof}
	From \eqref{sumk}, we know that sequence $\{x^k\}$ is convergent. Then we denote its limit as $x^{\infty}$. 
	
	For any $i\in \Gamma(x^{\infty})$, in view of $\lim_{k\to\infty}x^k=x^{\infty}$, we know that there exists $N\in\mathbb{N}$ such that
	\begin{equation*}
	x_i^{k+1}\neq 0,\quad\forall k\geq N.
	\end{equation*}
	Using $x^{k+1}=x^k+hw^{k+1}$, \eqref{L0P} and \eqref{al1}, for any $i\in \Gamma(x^{\infty})$, we have
	\begin{equation}\label{sop1}
	\begin{aligned}
	0\in \frac{h\epsilon}{1+h\gamma}\partial |w_i^{k+1}| + w_i^{k+1}- v_i^k+ \frac{h}{1+h\gamma}[\nabla f(x^k)]_i +\mathcal{N}_{\Omega_i^k}(w_i^{k+1}),\quad \forall k\geq N.
	\end{aligned}
	\end{equation}
	
	Next, we first show
	\begin{equation}\label{omg}
	\mathcal{N}_{\Omega^k}(w^{k+1})=\mathcal{N}_{\Omega}(x^{k+1}).
	\end{equation}
	In fact, for any $x\in\Omega^k=\{x\in\mathbb{R}^n: \frac{l-x^k}{h}\leq x\leq \frac{u-x^k}{h}\}$, set $q=hx+x^k$, then we get
	\begin{equation*}
	x\in\Omega^k\Longleftrightarrow q\in\Omega.
	\end{equation*}
	Moreover, for any $z\in\mathcal{N}_{\Omega^k}(w^{k+1}),$  it holds that
	\begin{equation*}
	\begin{aligned}
	0\geq \langle z, x-w^{k+1} \rangle =&\left\langle z, x- \frac{x^{k+1}-x^k}{h} \right\rangle ,\quad\forall x\in\Omega^k\\
	=&\left\langle z, \frac{q-x^k}{h}- \frac{x^{k+1}-x^k}{h} \right\rangle, \quad\forall q\in\Omega \\
	=&\frac{1}{h}\left\langle z, q- x^{k+1}\right\rangle, \quad\forall q\in\Omega,
	\end{aligned}
	\end{equation*}
	which is equivalent to $z\in\mathcal{N}_{\Omega}(x^{k+1})$ as $h>0$.
	
	Multiplying \eqref{sop1} by $\frac{1+h\gamma}{h}$ and using \eqref{omg}, then for $\forall k\geq N$, it yields that
	\begin{equation*}\label{kopt}
	\begin{aligned}
	0\in\ &\epsilon\partial |w_i^{k+1}|  + \frac{1+h\gamma}{h}w_i^{k+1}- \frac{1+h\gamma}{h} v_i^k+ [\nabla f(x^k)]_i\\
	& +\mathcal{N}_{\Omega_i}(x_i^{k+1}),\quad\forall i\in\Gamma(x^{\infty}),
	\end{aligned}
	\end{equation*}
	which is equivalent to the fact that there is $\xi^{k+1}_i\in \partial |w^{k+1}_i|$ such that for any $x_i\in \Omega_i$,
	\begin{equation}\label{opl}
	\begin{aligned}
	\left\langle -\frac{1+h\gamma}{h} w_i^{k+1}-\epsilon\xi^{k+1}_i+ \frac{1+h\gamma}{h}v_i^k- [\nabla f(x^k)]_i,\ x_i-x_i^{k+1}\right\rangle\leq 0.
	\end{aligned}
	\end{equation}
	
	In view of \eqref{sumk}, we observe that 
	\begin{equation}\label{xwk0}
	\lim_{k\to\infty}\|x^k-x^{k-1}\|=0\quad\mbox{and}\quad \lim_{k\to\infty} w^{k+1}=0.
	\end{equation}
	Combining this and the Lipschitz continuity of $\nabla f$, we obtain
	\begin{equation}\label{nafi}
	\lim_{k\to\infty}\nabla f(x^k) =\nabla f(x^{\infty})
	\end{equation}
	and $\lim_{k\to\infty}[\nabla f(x^k)-\nabla f(x^{k-1})]=0$, and then it yields by \eqref{al} that
	\begin{equation}\label{vk}
	\lim_{k\to\infty} v^k=0.
	\end{equation}
	
	Using the upper semicontinuity of $\partial |\cdot|$, \eqref{xwk0}, \eqref{nafi} and \eqref{vk}, and letting $k\to\infty$ in \eqref{opl}, we obtain that for any $i\in\Gamma(x^{\infty})$, there exists $\bar{\xi}_i\in\partial |0|$ satisfying
	\begin{equation*}\label{epf}
	\langle -\epsilon \bar{\xi}_i-[\nabla f(x^{\infty})]_i, x_i-x_i^{\infty} \rangle\leq 0,\quad\forall x_i\in\Omega_i.
	\end{equation*}
We further have
		\begin{equation}\label{pf}
	0\in [\nabla f(x^{\infty})]_i+\mathcal{N}_{\Omega_i}(x_i^{\infty})+[-\epsilon, \epsilon],\quad\forall i\in \Gamma(x^{\infty}).
	\end{equation}
	Thus, \eqref{pf} means that $x^{\infty}$ is an $\epsilon$-local minimizer of problem \eqref{pro1} with $\lambda_2>0$.
		
For the case that $\lambda_2=0$, we first define
	$$\Omega_{s^{\infty}}=\{x\in \Omega: x_j\leq 0,\ \forall j\in \Gamma^c(x^\infty)\}$$
	and $J_{l}^{\infty}=\{j\in [n]:x^{\infty}_j=0\ \mbox{and}\ l_j<0\}.$
 Then for any $j\in J_{l}^{\infty}$, we have
	\begin{equation}\label{nse}
	\mathcal{N}_{\Omega_j}(x_j)+\mathcal{N}_{[l_j, 0]}(x_j)
	\subseteq\mathcal{N}_{[\Omega_{s^{\infty}}]_j}(x_j),\quad\forall x_j\in [\Omega_{s^{\infty}}]_j,
	\end{equation}
which holds because of $[\Omega_{s^{\infty}}]_j=\Omega_j\cap [l_j, 0]$ for any $j\in J_{l}^{\infty}$.
	Based on \eqref{al1}, it yields that
	$0\in\frac{h\epsilon}{1+h\gamma}\partial |w_j^{k+1}|+ \frac{\lambda_1h}{1+h\gamma}\partial\|(x^{k+1}_j)_+\|_0+w_j^{k+1}- z_j^k  +\mathcal{N}_{\Omega_j^k}(w_j^{k+1})$ for any $j\in [n]$,
	which is equivalent to
	\begin{equation}\label{op2}
	\begin{aligned}
	0\in &\frac{h\epsilon}{1+h\gamma}\partial |w_j^{k+1}|+\frac{\lambda_1h}{1+h\gamma}\partial\|(x^{k+1}_j)_+\|_0+w_j^{k+1}- z_j^k  +\mathcal{N}_{\Omega_j}(x_j^{k+1}),
	\end{aligned}
	\end{equation}
	where we use \eqref{omg}. 
	Moreover, we have
	\begin{equation}\label{zs1}
	\lim_{k\to\infty}z^{k}=-\frac{h}{1+h\gamma}\nabla f(x^{\infty}),
	\end{equation}
	which holds due to \eqref{nafi} and \eqref{vk}.
	By \eqref{L0P} and $\frac{\lambda_1h}{1+h\gamma}>0$, we observe that
	\begin{equation}\label{nop}
	\frac{\lambda_1h}{1+h\gamma}\partial\|(x^{k+1}_j)_+\|_0=\partial\|(x^{k+1}_j)_+\|_0,\quad\forall j\in [n].
	\end{equation} 
	
	For any $j\in J_{l}^{\infty}$, we know that $x^{k+1}_j\to 0$, which implies that there exist a subsequence of $\{x^{k+1}_j\}$ (for simplicity we also denote it as $\{x^{k+1}_j\}$) and a sufficiently large $N\in\mathbb{N}$ such that for all $k\geq N$, it holds that
	\begin{equation*}
	0=x_j^{k+1}\to 0\quad \mbox{or}\quad 0\neq x_j^{k+1}\to 0\ \mbox{and}\  x_j^{k+1}\in (l_j, u_j).
	\end{equation*}
	Then we consider the following two cases.
	
	$\textbf{(a)}$ When $0=x_j^{k+1}\to 0$ for all $k\geq N$, using \eqref{op2} and \eqref{nop}, there is $a^{k+1}\geq 0$ satisfying $0\in \frac{h\epsilon}{1+h\gamma}\partial |w_j^{k+1}|+a^{k+1}+w_j^{k+1}- z_j^k  +\mathcal{N}_{\Omega_j}(x^{\infty}_j).$ Therefore, we have
	\begin{equation}\label{k01}
	0\in \frac{h\epsilon}{1+h\gamma}\partial |w_j^{k+1}|+w_j^{k+1}- z_j^k  +\mathcal{N}_{[\Omega_{s^{\infty}}]_j}(x^{\infty}_j),
	\end{equation}
	which holds due to \eqref{nse} and $\mathcal{N}_{[l_j, 0]}(0)=[0, +\infty)$ with $l_j<0$. Letting $k\to \infty$ in \eqref{k01} and using \eqref{xwk0} and \eqref{zs1}, we obtain
	\begin{equation}\label{aop}
	0\in [\nabla f(x^{\infty})]_j+\mathcal{N}_{[\Omega_{s^{\infty}}]_j}(x_j^{\infty})+[-\epsilon, \epsilon].
	\end{equation}
	
	$\textbf{(b)}$ When $0\neq x_j^{k+1}\to 0$ and $x_j^{k+1}\in (l_j, u_j)$ for all $k\geq N$, since $\partial\|(x^{k+1}_j)_+\|_0=0$, by \eqref{op2}, we observe that
	\begin{equation*}
	\begin{aligned}
	0\in \frac{h\epsilon}{1+h\gamma}\partial |w_j^{k+1}|+w_j^{k+1}- z_j^k  +\mathcal{N}_{\Omega_j}(x_j^{k+1}).
	\end{aligned}
	\end{equation*}
	We know that $\mathcal{N}_{\Omega_j}(x_j^{k+1})=0$ if $x_j^{k+1}\in (l_j, u_j)$. 
	Hence, it follows that
	\begin{equation}\label{2cs}
	\begin{aligned}
	0\in \frac{h\epsilon}{1+h\gamma}\partial |w_j^{k+1}|+w_j^{k+1}- z_j^k,\quad\forall k\geq N.
	\end{aligned}
	\end{equation}
Making $k\to\infty$ in \eqref{2cs}, we obtain 
	$0\in[-\epsilon, \epsilon]+[\nabla f(x^{\infty})]_j$, 
	which implies that \eqref{aop} holds.
	
	For any $i\in\Gamma(x^{\infty})$, it holds that $\mathcal{N}_{[{\Omega_{s^{\infty}}}]_i}(x_i^{\infty})=\mathcal{N}_{\Omega_i}(x_i^{\infty})$. This together with \eqref{pf} means
	\begin{equation*}\label{2epf}
	0\in [\nabla f(x^{\infty})]_i+\mathcal{N}_{[{\Omega_{s^{\infty}}}]_i}(x_i^{\infty})+[-\epsilon, \epsilon],\quad\forall i\in \Gamma(x^{\infty}).
	\end{equation*}
	Then we conclude that $x^{\infty}$ is an $\epsilon$-local minimizer of problem \eqref{pro1} with $\lambda_2=0$.
\end{proof}

Next, we discuss the convergence behavior of Algorithm \ref{alg1} when the dry friction coefficient $\epsilon$ is zero.

\begin{theorem}\label{th43}
	Assume that $f$ is level bounded on $\Omega$ and the parameters in Algorithm \ref{alg1} satisfy $\gamma>\frac{1}{h}+(2\beta+h)L_f$. Let $\{x^k\}$ be a sequence generated by Algorithm \ref{alg1} with $\epsilon=0$. Then,
	\begin{itemize}
		\item [$(i)$] there exists $M>0$ such that $\|x^k\|\leq M$, $\forall k\in\mathbb{N}$, and
		\begin{equation}\label{sk}
		\sum_{k=0}^{\infty}\|x^{k+1}-x^k\|^2<+\infty;
		\end{equation}
		
		\item [$(ii)$] any accumulation point of $\{x^k\}$ is a local minimizer of problem \eqref{pro1}.
	\end{itemize}
\end{theorem}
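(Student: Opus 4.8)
The plan is to sharpen the energy estimate already produced in the proof of Theorem \ref{th1}. Setting $\epsilon=0$ in \eqref{eq22} and recalling the energy $E_k$ in \eqref{ek}, the left-hand side of \eqref{eq22} is exactly the $\ell_0$ part of $F(x^k)-F(x^{k+1})$, so rearranging gives
\[
E_{k+1}-E_k \le -c\,\|x^{k+1}-x^k\|^2 - \frac{1}{2h^2}\|x^{k+1}-2x^k+x^{k-1}\|^2,
\]
where $c=\frac{1}{2}\bigl(\frac{\gamma}{h}-\frac{1}{h^2}-\frac{\beta}{h}L_f-L_f\bigr)-\frac{1}{2h}\beta L_f$. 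The \emph{strict} condition $\gamma>\frac{1}{h}+(2\beta+h)L_f$ guarantees $c>0$; this is the gain over Theorem \ref{th1}, where only $c\ge 0$ was available, and it is what upgrades the descent to a square-summable one (the $\ell^2$ analogue of the finite-length property, as expected when $\epsilon=0$). Since every summand in \eqref{ek} is nonnegative we have $E_k\ge 0$, so $\{E_k\}$ is nonincreasing and bounded below, hence convergent; telescoping then yields $c\sum_{k=0}^{N}\|x^{k+1}-x^k\|^2\le E_0-E_{N+1}\le E_0$, which gives \eqref{sk} and, in particular, $\|x^{k+1}-x^k\|\to 0$.

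For the boundedness claim in $(i)$ I would use $E_k\le E_0$ to deduce $F(x^k)-\inf_{\Omega}F\le E_0$, whence $f(x^k)\le F(x^k)\le E_0+\inf_{\Omega}F$ because the two $\ell_0$ terms are nonnegative. Since $x^k\in\Omega$ and $f$ is level bounded on $\Omega$, the sublevel set $\{x\in\Omega:f(x)\le E_0+\inf_{\Omega}F\}$ is bounded, producing the desired $M$.

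To prove $(ii)$, let $x^*$ be an accumulation point with $x^{k_j}\to x^*$. Because $\|x^{k+1}-x^k\|\to 0$ we also get $x^{k_j+1}\to x^*$, $w^{k_j}\to 0$, $w^{k_j+1}\to 0$, and by the Lipschitz continuity of $\nabla f$ together with \eqref{al}--\eqref{al2}, $\nabla f(x^{k_j})\to\nabla f(x^*)$, $v^{k_j}\to 0$, and $z^{k_j}\to-\frac{h}{1+h\gamma}\nabla f(x^*)$. Writing the first-order optimality condition for \eqref{al1} with $\epsilon=0$ componentwise and invoking \eqref{omg}, each index $i$ satisfies $0\in\frac{\lambda_1 h}{1+h\gamma}\partial\|(x_i^{k+1})_+\|_0+\frac{\lambda_2 h}{1+h\gamma}\partial\|(x_i^{k+1})_-\|_0+w_i^{k+1}-z_i^k+\mathcal N_{\Omega_i}(x_i^{k+1})$. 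For $i\in\Gamma(x^*)$ one has $x_i^{k_j+1}\ne 0$ for large $j$, so both $\ell_0$ subdifferentials vanish by \eqref{L0P}; passing to the limit and using the outer semicontinuity (closed graph) of $\mathcal N_{\Omega_i}$ gives $0\in[\nabla f(x^*)]_i+\mathcal N_{\Omega_i}(x_i^*)$ for all $i\in\Gamma(x^*)$. When $\lambda_2>0$ this is exactly the equivalent characterization following Proposition \ref{prop1}, so $x^*$ is a local minimizer.

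The remaining and most delicate part is $\lambda_2=0$, where Proposition \ref{lamd2} forces me to also control the indices $j\in J_l^*=\{j:x_j^*=0,\ l_j<0\}$, on which $\partial\|(0)_+\|_0=[0,+\infty)$ is a half line rather than a singleton. The main obstacle is that the support of the iterates need not stabilize at such indices, so I would pass to a further subsequence and split into the two cases used in Theorem \ref{th2}: either $x_j^{k_j+1}=0$ for all large $j$, in which case the nonnegative element of $\partial\|(\cdot)_+\|_0$ is absorbed into $\mathcal N_{[l_j,0]}(0)$ and, through an inclusion of the type \eqref{nse}, delivers $0\in[\nabla f(x^*)]_j+\mathcal N_{[\Omega_{s^*}]_j}(x_j^*)$; or $0\ne x_j^{k_j+1}\in(l_j,u_j)$, where both the $\ell_0$ subdifferential and $\mathcal N_{\Omega_j}(x_j^{k_j+1})$ vanish and the limit yields $[\nabla f(x^*)]_j=0\in[\nabla f(x^*)]_j+\mathcal N_{[\Omega_{s^*}]_j}(x_j^*)$. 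Combining these with the $i\in\Gamma(x^*)$ conditions, and noting that on the remaining indices ($x_i^*=0$, $l_i=0$) the normal cone is all of $\mathbb{R}$ so the inclusion is automatic, the first-order characterization of a global minimizer of the convex $f$ over the convex set $\Omega_{s^*}$ holds, and Proposition \ref{lamd2} identifies $x^*$ as a local minimizer.
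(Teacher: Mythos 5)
Your argument is correct and follows essentially the same route as the paper: the strict parameter inequality turns the energy estimate \eqref{eq22} into a descent with a strictly positive coefficient on $\|x^{k+1}-x^k\|^2$, level boundedness plus $E_k\le E_0$ gives boundedness, and part $(ii)$ repeats the limit-passing argument of Theorem \ref{th2} with the same two-case split at indices in $J_l^*$ when $\lambda_2=0$. No substantive differences from the paper's proof.
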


\begin{proof}
	$(i)$\  Following the procedure used to derive \eqref{ek1}, we observe that when $\epsilon=0$, the sequence $\{E_k\}$ remains non-increasing. This implies
	\begin{equation}\label{F0}
	F(x^k)-\inf_{\Omega} F\leq E_k\leq E_0=F(x^0)-\inf_{\Omega}F,\quad\forall k\in\mathbb{N}.
	\end{equation}
	By the nonnegativity of $\|(\cdot)_+\|_0$, it follows that $f(x^k)\leq F(x^k),$
	which together with \eqref{F0} and the level bounded property of $f$ on set $\Omega$ means that sequence $\{x^k\}$ is bounded.
	
	On the basis of \eqref{eq22}, $\epsilon=0$ and the definition of $E_k$, we obtain
	\begin{equation}\label{the2}
	\begin{aligned}
	0\geq \frac{1}{2}&(\frac{\gamma}{h}-\frac{1}{h^2}-\frac{2\beta}{h}L_f-L_f)\|x^{k+1}-x^k\|^2\\
	&+\frac{1}{2h^2}\|x^{k+1}-2x^k+x^{k-1}\|^2 +E_{k+1}-E_k,
	\end{aligned}
	\end{equation}
	which means that
	$\sum_{k=0}^{\infty} \frac{1}{2}(\frac{\gamma}{h}-\frac{1}{h^2}-\frac{2\beta}{h}L_f-L_f)\|x^{k+1}-x^k\|^2\leq E_0<+\infty.$
	Then using the condition $\gamma>\frac{1}{h}+(2\beta+h)L_f$, it holds that
	$\sum_{k=0}^{\infty}\|x^{k+1}-x^k\|^2<+\infty$.
	
	\vspace{0.12cm}
	
	$(ii)$\ 
	From \eqref{sk}, we notice that 
	\begin{equation}\label{axk}
	\lim_{k\to\infty}\|x^{k+1}-x^k\|=0\quad\mbox{and}\quad \lim_{k\to\infty} w^k=0.
	\end{equation}
	Suppose $x^*$ is an accumulation point of sequence $\{x^k\}$ with the convergence of subsequence $\{x^{k_i}\}$. 
	Using \eqref{axk}, we obtain $\lim_{i\to\infty}x^{k_i+1}=x^*$. Additionally, by the Lipschitz continuity of $\nabla f$, it follows that
	\begin{equation}\label{avk}
	\lim_{k\to\infty} v^k=0\quad\mbox{and}\quad\lim_{i\to\infty}\nabla f(x^{k_i})=\nabla f(x^*).
	\end{equation}
	
	For all $j\in\Gamma(x^*)$, there exists a sufficiently large $N\in\mathbb{N}$ such that
	\begin{equation}\label{ne1}
	x_j^{k_i+1}\neq 0,\quad\forall i\geq N.
	\end{equation}
	From the definition of $w^{k_i+1}$, we observe $x^{k_i+1}=x^{k_i}+hw^{k_i+1}$ and
	\begin{equation}\label{op}
	\begin{aligned}
	0\in &\frac{h\lambda_1}{1+h\gamma}\partial \|(x^{k_i}+hw^{k_i+1})_+\|_0-\frac{h\lambda_2}{1+h\gamma}\partial \|(x^{k_i} +hw^{k_i+1})_-\|_0 \\
	&\quad+w^{k_i+1}-z^{k_i}
	+\mathcal{N}_{\Omega^{k_i}}(w^{k_i+1}).
	\end{aligned}
	\end{equation}
	Combining \eqref{L0P}, \eqref{omg}, \eqref{ne1} and \eqref{op}, we obtain that for any $i\geq N$, it holds that
	\begin{equation}\label{er}
	\begin{aligned}
	0\in w_j^{k_i+1}-v_j^{k_i}
	+\frac{h}{1+h\gamma}[\nabla f(x^{k_i})]_j +\mathcal{N}_{\Omega_j}(x_j^{k_i+1}),\quad\forall j\in\Gamma(x^*).
	\end{aligned}
	\end{equation}
	Taking the limit in \eqref{er} with respect to $i$, and using \eqref{axk} and \eqref{avk}, it gives
	\begin{equation}\label{ops}
	0\in[\nabla f(x^*)+\mathcal{N}_{\Omega}(x^*)]_{\Gamma(x^*)}.
	\end{equation}
This indicates that $x^*$ is a global minimizer of function $f$ on set $\Omega_{\Gamma^c(x^*)}$. Then by Proposition \ref{prop1}, we obtain that $x^*$ is a local minimizer of problem \eqref{pro1} with $\lambda_2>0$.
	
Consider the case that $\lambda_2=0$. At each iteration, $w^{k+1}$ is obtained by solving the following subproblem
	\begin{equation*}\label{bk}
	w^{k+1}\in \arg\min_{y\in\Omega^k}\left\{\frac{\lambda_1}{1+h\gamma}\|(x^k+hy)_+\|_0
	+\frac{1}{2}\|y-z^k\|^2\right\}.
	\end{equation*}
	
	For a subsequence $\{x^{k_i}\}$ of $\{x^k\}$ with $x^{k_i}\to x^*$, by \eqref{omg}, it follows that
	\begin{equation*}
	0\in \frac{\lambda_1 h}{1+h\gamma}\partial\|x^{k_i+1}_+\|_0+w^{k_i+1}-z^{k_i}+\mathcal{N}_{\Omega}(x^{k_i+1}).
	\end{equation*}
	Furthermore, from \eqref{avk}, we have $\lim_{i\to\infty}z^{k_i}=-\frac{h}{1+h\gamma}\nabla f(x^*)$.
	Then, using arguments similar to those in Theorem \ref{th2}, we derive 
	\begin{equation*}\label{ops2}
	0\in [\nabla f(x^*)]_j+\mathcal{N}_{[\Omega_{s^*}]_j}(x^*_j),\quad\forall j\in J_l^*,
	\end{equation*}
	where $\Omega_{s^*}=\{x\in \Omega: x_i\leq 0,\ \forall i\in \Gamma^c(x^*)\}$ and $J_l^*= \{j\in [n]:x^*_j=0\ \mbox{and}\ l_j<0\}$. 
	Combining this result with \eqref{ops}, it follows that
	$x^*$ is a local minimizer of problem \eqref{pro1} with $\lambda_2=0$.
\end{proof}

\section{Errors and perturbations}
In this section, we consider Algorithm \ref{alg1} with errors and perturbations, which is derived by discretizing the following dynamic system
\begin{equation*}
\begin{aligned}
e(t)\in \ddot{x}(t)&+\gamma\dot{x}(t)+\epsilon\partial\|\dot{x}(t)\|_1+\beta\nabla^2 f(x(t))\dot{x}(t) \\
&+\nabla f(x(t))+\lambda_1\partial \|x(t)_+\|_0+\lambda_2\partial \|x(t)_-\|_0,
\end{aligned}
\end{equation*}
where $e(\cdot)$ represents errors and perturbations.
By taking a fixed time step size $h>0$ and applying a similar temporal discretization as in \eqref{dis}, we obtain
\begin{equation}\label{eis}
\begin{aligned}
e^k\in \frac{1}{h^2}&(x^{k+1}-2x^k+x^{k-1})+\frac{\gamma}{h}(x^{k+1}-x^k) \\
&+\nabla f(x^k) 
+\frac{\beta}{h}\left(\nabla f(x^k)-\nabla f(x^{k-1})\right) \\
&+\epsilon \partial \left\|\frac{1}{h}(x^{k+1}-x^k)\right\|_1+\lambda_1 \partial\|x^{k+1}_+\|_0+\lambda_2 \partial\|x^{k+1}_-\|_0,
\end{aligned}
\end{equation}

\begin{algorithm}
	\caption{Perturbation Extrapolated Hard Thresholding Algorithm with Hessian-driven Damping and Dry Friction}\label{alg3}
	\begin{algorithmic}[1]
		\Require Take initial points $x^{-1}=x^0\in\Omega$. Choose parameters $\epsilon\geq0$, $\beta\geq0$, $h>0$ and $\gamma>0$ such that $\gamma\geq \frac{1}{h}+(2\beta+h)L_f$. Set $k=0$.
		\While{a termination criterion is not met,}
		\State \textbf{Step 1:} Compute
		\begin{align}
		&v^k=\frac{1}{h(1+h\gamma)}(x^k-x^{k-1})-\frac{\beta}{1+h\gamma}(\nabla f(x^k)-\nabla f(x^{k-1})), \label{ev1}\\ 
		&z^k=v^k-\frac{h}{1+h\gamma}(\nabla f(x^k)-e^k),\label{el2}\\ 
		&x^{k+1}\in x^k+h\cdot\arg\min_{y\in\Omega^k}Q(x^k,z^k,y).\label{el1}
		\end{align}
		\State \textbf{Step 2:} Increment $k$ by one and return to \textbf{Step 1}.
		\EndWhile
	\end{algorithmic}
\end{algorithm}

\noindent where $e^k\in\mathbb{R}^n$ represents an unknown error term.
Solving \eqref{eis} with respect to $x^{k+1}$, then we derive the perturbed version of Algorithm \ref{alg1}, which is stated in Algorithm \ref{alg3}. The convergence analysis of Algorithm \ref{alg3} is given in Theorem \ref{ethe1} and Theorem \ref{ethe2}.

	\vspace{0.12cm}

\begin{theorem}\label{ethe1}
	Suppose that $\{x^k\}$ is the sequence generated by Algorithm \ref{alg3} with $\epsilon>0$ and the error sequence $\{e^k\}$ satisfies $\lim_{k\to\infty} e^k=0$. Then, it holds that
	\begin{equation*}\label{eumk}
	\sum_{k=0}^{\infty} \|x^{k+1}-x^k\| < \infty.
	\end{equation*}
	And there exists $x^{\infty}\in\Omega$ such that $\lim_{k\to\infty}x^k=x^{\infty}$. Furthermore, $x^{\infty}$ is an $\epsilon$-local minimizer of problem \eqref{pro1}.
\end{theorem}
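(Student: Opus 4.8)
The plan is to adapt the arguments of Theorem~\ref{th1} and Theorem~\ref{th2}, tracking the single extra contribution introduced by the error $e^k$ through the modified quantity $z^k$ in \eqref{el2}. Since Algorithm~\ref{alg3} differs from Algorithm~\ref{alg1} only by replacing $\nabla f(x^k)$ with $\nabla f(x^k)-e^k$ inside $z^k$, the derivation of Lemma~\ref{lema1} carries over almost verbatim: the inner-product identity \eqref{ns2} merely acquires the additional summand $\frac{h}{1+h\gamma}\langle w^{k+1}, e^k\rangle$. Carrying this through exactly as in the proof of Theorem~\ref{th1} (the descent-lemma estimate on the $\nabla f(x^k)$ part is unchanged) produces the perturbed energy inequality
\[
\epsilon\|x^{k+1}-x^k\|_1 + \frac{1}{2h^2}\|x^{k+1}-2x^k+x^{k-1}\|^2 + E_{k+1} - E_k \leq h\langle w^{k+1}, e^k\rangle,
\]
where $E_k$ is the energy in \eqref{ek}. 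Using $hw^{k+1}=x^{k+1}-x^k$ and Cauchy--Schwarz, the right-hand side is at most $\|e^k\|\,\|x^{k+1}-x^k\|$.

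The main obstacle is the circular structure of this estimate: the perturbation term involves precisely the quantity $\|x^{k+1}-x^k\|$ whose summability we are after, so, in contrast to the unperturbed case, $\{E_k\}$ need not be monotone from the start. I would resolve this by invoking $\lim_{k\to\infty}e^k=0$: pick $N\in\mathbb{N}$ with $\|e^k\|\leq \epsilon/2$ for all $k\geq N$, so that for such $k$,
\[
\frac{\epsilon}{2}\|x^{k+1}-x^k\| + \frac{1}{2h^2}\|x^{k+1}-2x^k+x^{k-1}\|^2 + E_{k+1}-E_k \leq 0.
\]
Hence $\{E_k\}_{k\geq N}$ is non-increasing, and since $E_k\geq 0$ its limit exists. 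Telescoping $\frac{\epsilon}{2}\|x^{k+1}-x^k\|\leq E_k-E_{k+1}$ over $k\geq N$ gives $\sum_{k\geq N}\|x^{k+1}-x^k\|\leq \frac{2}{\epsilon}\big(E_N-\lim_k E_k\big)<\infty$, and adding the finitely many initial terms yields $\sum_{k=0}^\infty\|x^{k+1}-x^k\|<\infty$. Consequently $\{x^k\}$ is Cauchy in $\mathbb{R}^n$, and its limit $x^{\infty}$ lies in $\Omega$ by closedness of $\Omega$.

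To identify $x^\infty$ as an $\epsilon$-local minimizer I would reuse the optimality-condition analysis of Theorem~\ref{th2} essentially verbatim, after verifying that the limit of $z^k$ is unaffected by the perturbation. The finite length forces $w^k\to 0$ and, by Lipschitz continuity of $\nabla f$, $\nabla f(x^k)-\nabla f(x^{k-1})\to 0$, whence $v^k\to 0$ from \eqref{ev1}; combining this with $e^k\to0$ and \eqref{el2} gives $\lim_{k\to\infty}z^k=-\frac{h}{1+h\gamma}\nabla f(x^{\infty})$, which is exactly the limit exploited in Theorem~\ref{th2}. The first-order conditions extracted from the subproblem \eqref{el1}, together with the normal-cone identity \eqref{omg}, the upper semicontinuity of $\partial|\cdot|$, and the same case analysis over indices in $\Gamma(x^\infty)$ and in $J^\infty_l$, then deliver the $\epsilon$-local minimizer conditions of Definition~\ref{def2} (when $\lambda_2>0$) and Definition~\ref{def3} (when $\lambda_2=0$). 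This completes the three assertions.
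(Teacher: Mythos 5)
Your proposal is correct and follows essentially the same route as the paper's proof: the error enters only through $\langle w^{k+1},z^k\rangle$ as the extra summand $\frac{h}{1+h\gamma}\langle w^{k+1},e^k\rangle$, is bounded via Cauchy--Schwarz to give $(\epsilon-\|e^k\|)\|x^{k+1}-x^k\|+E_{k+1}-E_k\le 0$, and the condition $e^k\to 0$ is used exactly as you describe to get $\|e^k\|\le\epsilon/2$ for $k\ge N$, telescope, and conclude finite length. The identification of $x^\infty$ via $\lim_k z^k=-\frac{h}{1+h\gamma}\nabla f(x^\infty)$ and the reuse of the Theorem~\ref{th2} argument is also precisely what the paper does.
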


\begin{proof}
	Based on \eqref{el2}, we have
	\begin{equation}\label{es2}
	\begin{aligned}
	\langle w^{k+1}, z^k\rangle=&\langle w^{k+1}, v^k-\frac{h}{1+h\gamma}(\nabla f(x^k)-e^k)\rangle \\
	=&\frac{1}{1+h\gamma}\langle w^{k+1}, w^{k} \rangle
	-\frac{h}{1+h\gamma}\langle w^{k+1}, \nabla f(x^k)\rangle\\
	&-\frac{\beta}{1+h\gamma}\langle w^{k+1}, \nabla f(x^k)-\nabla f(x^{k-1}) \rangle +\frac{h}{1+h\gamma} \langle w^{k+1}, e^k \rangle. \\
	\end{aligned}
	\end{equation}
	By the Cauchy-Schwarz inequality, it follows that $\langle w^{k+1}, e^k \rangle\leq \|w^{k+1}\| \cdot \|e^k\|$. This together with  \eqref{wfi} and \eqref{es2} implies that
	\begin{equation*}
	\begin{aligned}
	&\lambda_1\|x^k_+\|_0 +\lambda_2\|x^k_-\|_0 \\
	\geq&h\epsilon \|w^{k+1}\|_1+\lambda_1\|x^{k+1}_+\|_0+\lambda_2\|x^{k+1}_-\|_0+\frac{1}{2}(1+h\gamma)\|w^{k+1}\|^2-\langle w^{k+1}, w^{k} \rangle  \\
	&\quad
	+h\langle w^{k+1}, \nabla f(x^k)\rangle  +\beta\langle w^{k+1}, \nabla f(x^k)-\nabla f(x^{k-1}) \rangle-h\|w^{k+1}\| \cdot \|e^k\|.
	\end{aligned}
	\end{equation*}
	Combining this and \eqref{se}, we obtain
	\begin{equation*}\label{ens5}
	\begin{aligned}
	&\lambda_1\|x^k_+\|_0 +\lambda_2\|x^k_-\|_0-\left[\lambda_1\|x^{k+1}_+\|_0+\lambda_2\|x^{k+1}_-\|_0\right]\\
	\geq & h\epsilon \|w^{k+1}\|_1 + \frac{1}{2}(h\gamma -1)\|w^{k+1}\|^2+\left\langle w^{k+1}, w^{k+1}- w^{k} \right\rangle -h\|w^{k+1}\| \cdot \|e^k\| \\
	&\  +h\left\langle w^{k+1}, \nabla f(x^k)\right\rangle +\beta\left\langle w^{k+1}, \nabla f(x^k)-\nabla f(x^{k-1}) \right\rangle.\\
	\end{aligned}
	\end{equation*}
	Using a similar analysis to the beginning of Theorem \ref{th1}, it holds that
	\begin{equation}\label{k1}
	\left(\epsilon-\|e^k\|\right)\cdot \|x^{k+1}-x^k\| +E_{k+1}-E_k\leq 0,
	\end{equation}
	where $\|\cdot\|\leq \|\cdot\|_1$ is used. By $\lim_{k\to\infty} e^k=0$, we know that there exists a sufficiently large $N\in\mathbb{N}$ such that
	$\|e^k\|\leq \frac{\epsilon}{2}$ for all $k\geq N$, which together with \eqref{k1} yields that
	\begin{equation}\label{k2}
	\frac{\epsilon}{2} \|x^{k+1}-x^k\| +E_{k+1}-E_k\leq 0,\quad\forall k\geq N.
	\end{equation}
	For any $p> 0$, summing \eqref{k2} with respect to $k$ from $N$ to $N+p$, we obtain
	\begin{equation*}
	\sum_{k=N}^{N+p}\frac{\epsilon}{2} \|x^{k+1}-x^k\|\leq E_N-E_{N+p+1}\leq E_N.
	\end{equation*}
	Letting $p\to\infty$ in the above expression, we deduce that
	$\sum_{k=0}^{\infty} \|x^{k+1}-x^k\|<\infty$, which further implies the existence of $x^{\infty}$.
	
	Using the definitions in \eqref{ev1} and \eqref{el2}, along with $\lim_{k\to\infty} e^k=0$, we have $\lim_{k\to\infty}z^k=-\frac{h}{1+h\gamma}\nabla f(x^{\infty})$. The proof of the remaining results of this theorem follows a similar argument as in Theorem \ref{th2}, and thus, it is omitted here.
\end{proof}
	
\begin{theorem}\label{ethe2}
	Assume that the parameters in Algorithm \ref{alg3} satisfy $\gamma>\frac{1}{h}+(2\beta+h)L_f$. Let $\{x^k\}$ be a sequence generated by Algorithm \ref{alg3}, in which $\epsilon=0$ and $\sum_{k=0}^{\infty} \|e^k\|^2<\infty$. Then
		\begin{equation}\label{pr}
	\sum_{k=0}^{\infty}\|x^{k+1}-x^k\|^2<+\infty.
	\end{equation}  
	Moreover, for any $x^*\in\mathcal{X}:=\left\{\bar{x}\in \Omega: \exists \{x^{k_i}\}\subseteq \{x^k\}\  \mbox{satisfying}\ \lim_{i\to\infty}x^{k_i}=\bar{x}\right\}$, it is a local minimizer of problem \eqref{pro1}.
\end{theorem}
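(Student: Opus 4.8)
The plan is to combine the energy-descent machinery of Theorem \ref{th43} (the unperturbed $\epsilon=0$ case) with the perturbation device from Theorem \ref{ethe1}. First I would redo the derivation behind \eqref{es2} and \eqref{wfi}, now with $\epsilon=0$, so the single-step estimate picks up an extra error term. Arguing exactly as in the passage producing \eqref{eq22} and \eqref{the2} — bounding $f(x^{k+1})-f(x^k)$ and $\langle\nabla f(x^k)-\nabla f(x^{k-1}),w^{k+1}\rangle$ via the $L_f$-Lipschitz continuity of $\nabla f$, expanding $\langle w^{k+1},w^{k+1}-w^k\rangle$ through \eqref{se}, and folding everything into the energy $E_k$ of \eqref{ek} — I expect to reach
\begin{equation*}
c\,\|x^{k+1}-x^k\|^2+\frac{1}{2h^2}\|x^{k+1}-2x^k+x^{k-1}\|^2+E_{k+1}-E_k\le \langle x^{k+1}-x^k,\,e^k\rangle,
\end{equation*}
where $c:=\tfrac12\big(\tfrac{\gamma}{h}-\tfrac{1}{h^2}-\tfrac{2\beta}{h}L_f-L_f\big)>0$ by the strict condition $\gamma>\tfrac1h+(2\beta+h)L_f$.

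The key new step is to control the right-hand perturbation, and this is where the argument genuinely departs from Theorem \ref{ethe1}: with $\epsilon=0$ there is no term $\epsilon\|x^{k+1}-x^k\|$ to dominate $\|e^k\|\,\|x^{k+1}-x^k\|$, only the quadratic descent $c\|x^{k+1}-x^k\|^2$. I would therefore apply Young's inequality in the form $\langle x^{k+1}-x^k,e^k\rangle\le \tfrac{c}{2}\|x^{k+1}-x^k\|^2+\tfrac{1}{2c}\|e^k\|^2$, which yields
\begin{equation*}
\frac{c}{2}\|x^{k+1}-x^k\|^2+E_{k+1}-E_k\le \frac{1}{2c}\|e^k\|^2 .
\end{equation*}
Since $E_k$ is no longer monotone, I would first sum the weaker bound $E_{k+1}\le E_k+\tfrac{1}{2c}\|e^k\|^2$ and invoke $\sum_k\|e^k\|^2<\infty$ to conclude that $\{E_k\}$ is bounded above; together with $E_k\ge F(x^k)-\inf_\Omega F\ge0$ this makes $\{E_k\}$ bounded. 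Summing the displayed descent inequality from $0$ to $K$ and using $E_{K+1}\ge0$ then gives $\tfrac{c}{2}\sum_{k=0}^K\|x^{k+1}-x^k\|^2\le E_0+\tfrac{1}{2c}\sum_{k=0}^\infty\|e^k\|^2$, and letting $K\to\infty$ establishes \eqref{pr}. Note that, unlike Theorem \ref{th43}, level boundedness of $f$ is not needed: the minimizer claim is asserted only for points of $\mathcal{X}$, so if the sequence escapes to infinity and $\mathcal{X}=\emptyset$ the second assertion is vacuous.

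For the second assertion I would follow part $(ii)$ of Theorem \ref{th43} closely. From \eqref{pr} we obtain $\|x^{k+1}-x^k\|\to0$ and $w^k\to0$; hence for any $x^*\in\mathcal{X}$ with $x^{k_i}\to x^*$ we also have $x^{k_i+1}\to x^*$, and Lipschitz continuity of $\nabla f$ gives $v^k\to0$ and $\nabla f(x^{k_i})\to\nabla f(x^*)$. The only adjustment is that $z^k$ now carries $e^k$ through \eqref{el2}; but $\sum_k\|e^k\|^2<\infty$ forces $\|e^k\|\to0$, so $z^{k_i}\to-\tfrac{h}{1+h\gamma}\nabla f(x^*)$ exactly as before. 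Writing the first-order optimality condition of the subproblem \eqref{el1}, using \eqref{L0P} and \eqref{omg} to annihilate the $\|(\cdot)_\pm\|_0$ subdifferentials on $\Gamma(x^*)$, and passing to the limit along $\{k_i\}$ reproduces \eqref{ops}, i.e.\ $0\in[\nabla f(x^*)+\mathcal{N}_\Omega(x^*)]_{\Gamma(x^*)}$; Proposition \ref{prop1} then closes the $\lambda_2>0$ case. For $\lambda_2=0$ I would additionally treat the indices $j\in J_l^*$ through the two cases (a) and (b) used in Theorems \ref{th2} and \ref{th43}, obtaining $0\in[\nabla f(x^*)]_j+\mathcal{N}_{[\Omega_{s^*}]_j}(x^*_j)$ and concluding via Proposition \ref{lamd2}.

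The hard part will be the perturbation control in the second paragraph: the square-summability hypothesis $\sum_k\|e^k\|^2<\infty$ — rather than the weaker $e^k\to0$ sufficient in the $\epsilon>0$ setting — is precisely what lets the error be absorbed by the quadratic descent through Young's inequality \emph{and} simultaneously keeps the now quasi-monotone energy $\{E_k\}$ bounded. Making these two uses of the same hypothesis cooperate, so that summability of $\|x^{k+1}-x^k\|^2$ survives the loss of monotonicity, is the crux of the proof.
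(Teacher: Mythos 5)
Your proposal is correct and follows essentially the same route as the paper: the paper derives the same perturbed descent inequality with the term $\|x^{k+1}-x^k\|\cdot\|e^k\|$, absorbs it via the same Young-type splitting $\|x^{k+1}-x^k\|\,\|e^k\|\le \tfrac{\zeta}{2}\|x^{k+1}-x^k\|^2+\tfrac{1}{2\zeta}\|e^k\|^2$ with $\zeta$ equal to your $c$, and sums using $E_k\ge 0$ to get \eqref{pr}, then refers the accumulation-point claim back to the argument of Theorem \ref{th43}. The only cosmetic difference is that you apply Young's inequality per step before summing (and add a harmless remark on boundedness of $\{E_k\}$), whereas the paper sums first and then splits the accumulated cross terms.
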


\begin{proof}
	The proof is similar to that of Theorem \ref{th43}, where the key for the analysis is 
	$\sum_{k=0}^{\infty}\|x^{k+1}-x^k\|^2<+\infty.$
	Then we only show how to obtain it. When $\epsilon =0$, using the same analysis as in \eqref{the2}, we have
	\begin{equation*}\label{esk}
	\begin{aligned}
	0\geq& \frac{1}{2}(\frac{\gamma}{h}-\frac{1}{h^2}-\frac{2\beta}{h}L_f-L_f)\|x^{k+1}-x^k\|^2-\|x^{k+1}-x^k\| \cdot \|e^k\|\\
	&+\frac{1}{2h^2}\|x^{k+1}-2x^k+x^{k-1}\|^2 +E_{k+1}-E_k.
	\end{aligned}
	\end{equation*}
	For any $J\in \mathbb{N}$, summing up the above inequality for $k=0,\cdots,J$, and then using $E_k\geq 0$ for all $k\in\mathbb{N}$, we obtain
	\begin{equation*}
\begin{aligned}
	\sum_{k=0}^{J}\zeta\|x^{k+1}-x^k\|^2
	&\leq E_0+\sum_{k=0}^{J}\|x^{k+1}-x^k\| \cdot \|e^k\| \\
	&\leq E_0+\sum_{k=0}^{J}\frac{\zeta}{2}\|x^{k+1}-x^k\|^2+\sum_{k=0}^{J}\frac{1}{2\zeta}\|e^k\|^2,
	\end{aligned}
	\end{equation*}
	where $\zeta:=\frac{1}{2}(\frac{\gamma}{h}-\frac{1}{h^2}-\frac{2\beta}{h}L_f-L_f)>0$ due to $\gamma>\frac{1}{h}+(2\beta+h)L_f$. Then it follows that
	\begin{equation*}
	\sum_{k=0}^{J}\frac{\zeta}{2}\|x^{k+1}-x^k\|^2\leq E_0+\sum_{k=0}^{J}\frac{1}{2\zeta}\|e^k\|^2.
	\end{equation*}
	Letting $J\to\infty$, we obtain $\sum_{k=0}^{\infty}\frac{\zeta}{2}\|x^{k+1}-x^k\|^2\leq E_0+ \sum_{k=0}^{\infty}\frac{1}{2\zeta}\|e^k\|^2$.
	Then we deduce that \eqref{pr} holds because of $\sum_{k=0}^{\infty} \|e^k\|^2<\infty$.
\end{proof}

\begin{remark}
	When $\beta=0$ in \eqref{al}, the convergence results established in Theorem \ref{th1} and Theorem \ref{th2} also hold
	if $\gamma\geq \frac{1}{h}+hL_f$. Further, the results in Theorem \ref{th43} hold for $\gamma> \frac{1}{h}+hL_f$.
	At this time, the algorithm can be obtained by the time discretization of the following inertial gradient system with dry friction:
	\begin{equation*}
0\in \ddot{x}(t)+\gamma\dot{x}(t)+\epsilon\partial\|\dot{x}(t)\|_1+\nabla f(x(t))+\lambda_1\partial \|x(t)_+\|_0+\lambda_2\partial \|x(t)_-\|_0.
	\end{equation*}
	Similar results also hold for the perturbed version of the algorithm.
\end{remark}

\section{Numerical experiments}

In this section, we present numerical results to demonstrate the effectiveness of the proposed algorithm. All experiments are conducted using Python 3.7.0 on a 64-bit Lenovo PC equipped with an Intel(R) Core(TM) i7-10710U CPU @1.10GHz (1.61GHz) and 16GB of RAM. The algorithms terminate when either $x^k$ satisfies the condition of being an $\bar{\epsilon}$-local minimizer, as defined in Definitions \ref{def2} and \ref {def3}, or the number of iterations exceeds $3000$.

For clarity in graphs and commentaries, we refer to the extrapolated hard thresholding algorithm with Hessian-driven damping and dry friction (Algorithm \ref{alg1}) as Alg1, and the extrapolated hard thresholding algorithm with Hessian-driven damping only (Algorithm \ref{alg1} with $\epsilon=0$) as $\mbox{Alg1}_{\epsilon=0}$. Additionally, ``Spar" represents the sparsity level of the true solution used to generate the problem data. 

\vspace{0.12cm}

\begin{example}\label{exa:5.1} 
We consider the following optimization problem in $\mathbb{R}^3$ to illustrate the effect of introducing the item $\nabla f(x^k)-\nabla f(x^{k-1})$ in the proposed algorithm:
\begin{equation}\label{be1}
\min_{\mathbf{-50}\leq x\leq \mathbf{50}} 500x^{\mathrm{T}}x+30x_2(x_1+x_3)+0.01 \|x\|_0.
\end{equation}
Obviously, the global minimizer of problem \eqref{be1} is $x^*=(0,0,0)$.
\end{example}

We first apply the accelerated iterative hard thresholding algorithm with the same extrapolation parameters as the well-known FISTA proposed by Beck and Teboulle in \cite{Beck2009A} ($\mbox{EHT}_{\footnotesize\mbox{\emph{FISTA}}}$) to solve problem \eqref{be1}. We then use Alg1 with $\beta$ set to $0$ and $0.005$ ($\mbox{Alg1}_{\beta=0.005}$) to solve the same problem. For this example, the initial point is chosen as $x^0=(20, 19, 20)$, and $L_f=\|\nabla^2 f(x)\|$. The parameters for $\mbox{EHT}_{\footnotesize\mbox{\emph{FISTA}}}$ are $L=2L_f$ and $t_1=1$, while the parameters for Alg1 are set as follows:
\begin{equation*}
\epsilon=1\mathrm{e}-3,\ \bar{\epsilon}=2\epsilon,\ h=0.1,\ \gamma=\frac{1}{h}+(2\beta+h)L_f.
\end{equation*}

\begin{figure}[htbp]
	\centerline{
		\subfigure[$\{f(x^k)\}$]{\label{fig1.a}\includegraphics[width=0.35\textwidth]{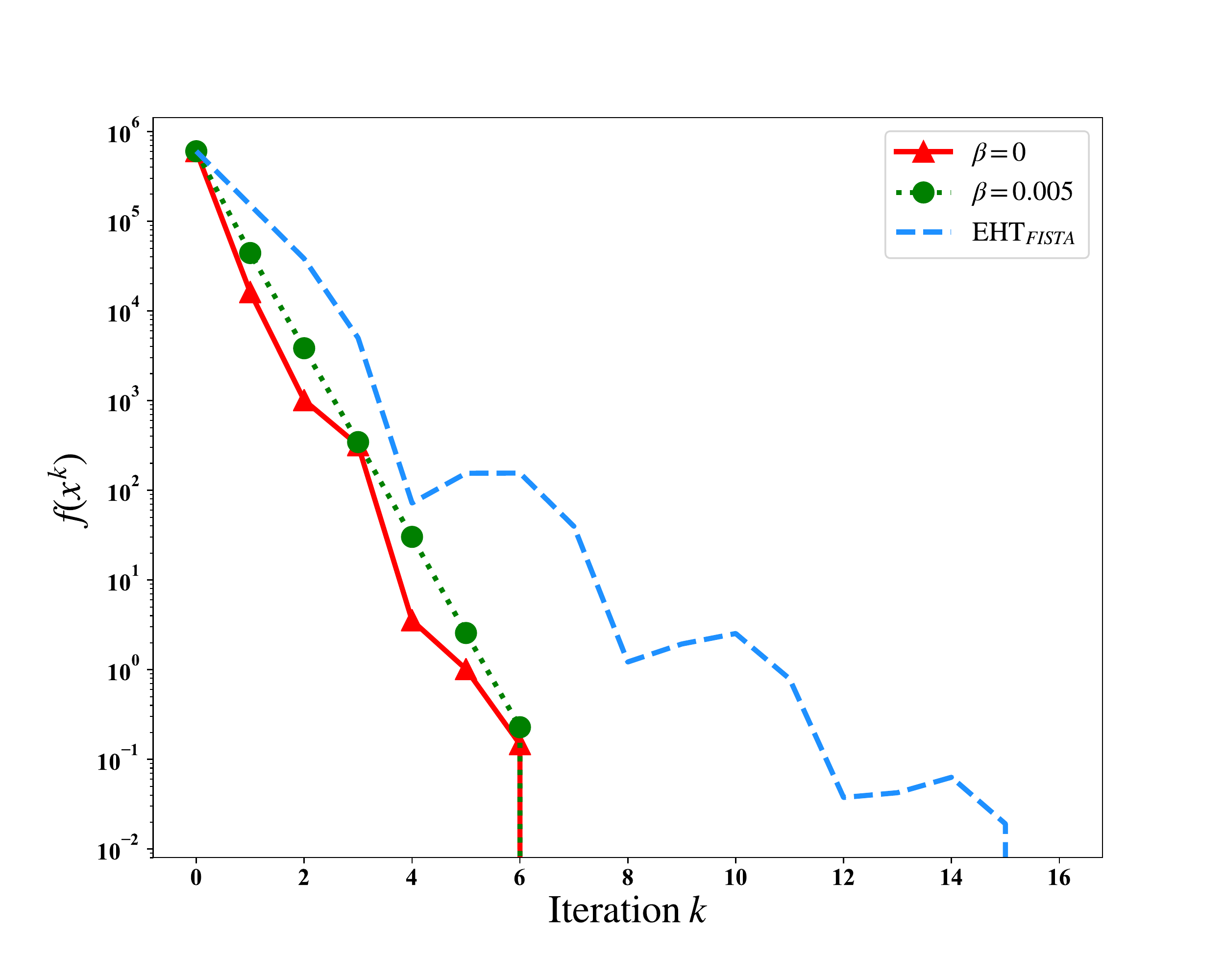}}
		\subfigure[$\{\|x^k\|_0\}$]{\label{fig1.b}\includegraphics[width=0.35\textwidth]{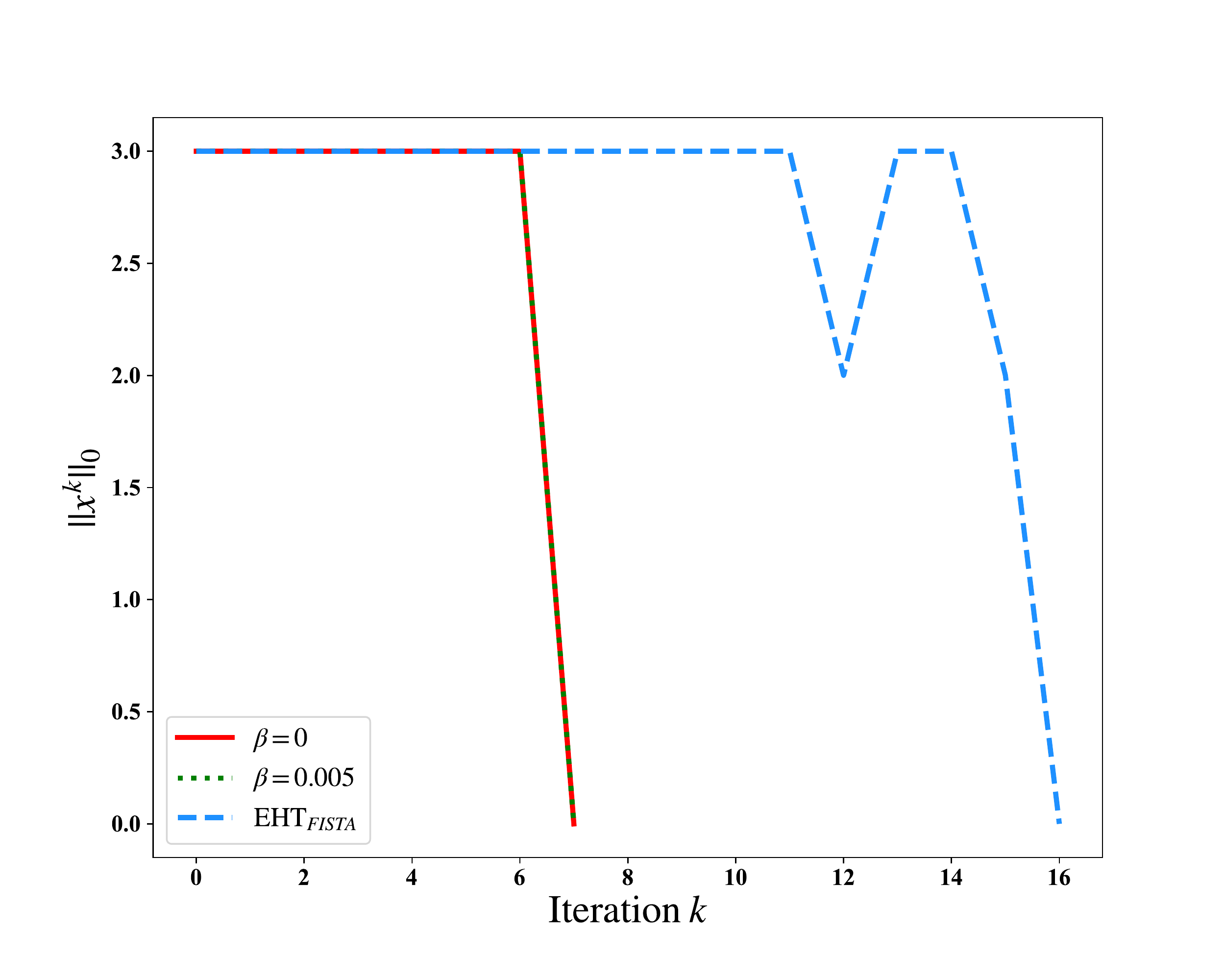}}
		\subfigure[$\{y^k_2\}$]{\label{fig1.c}\includegraphics[width=0.35\textwidth]{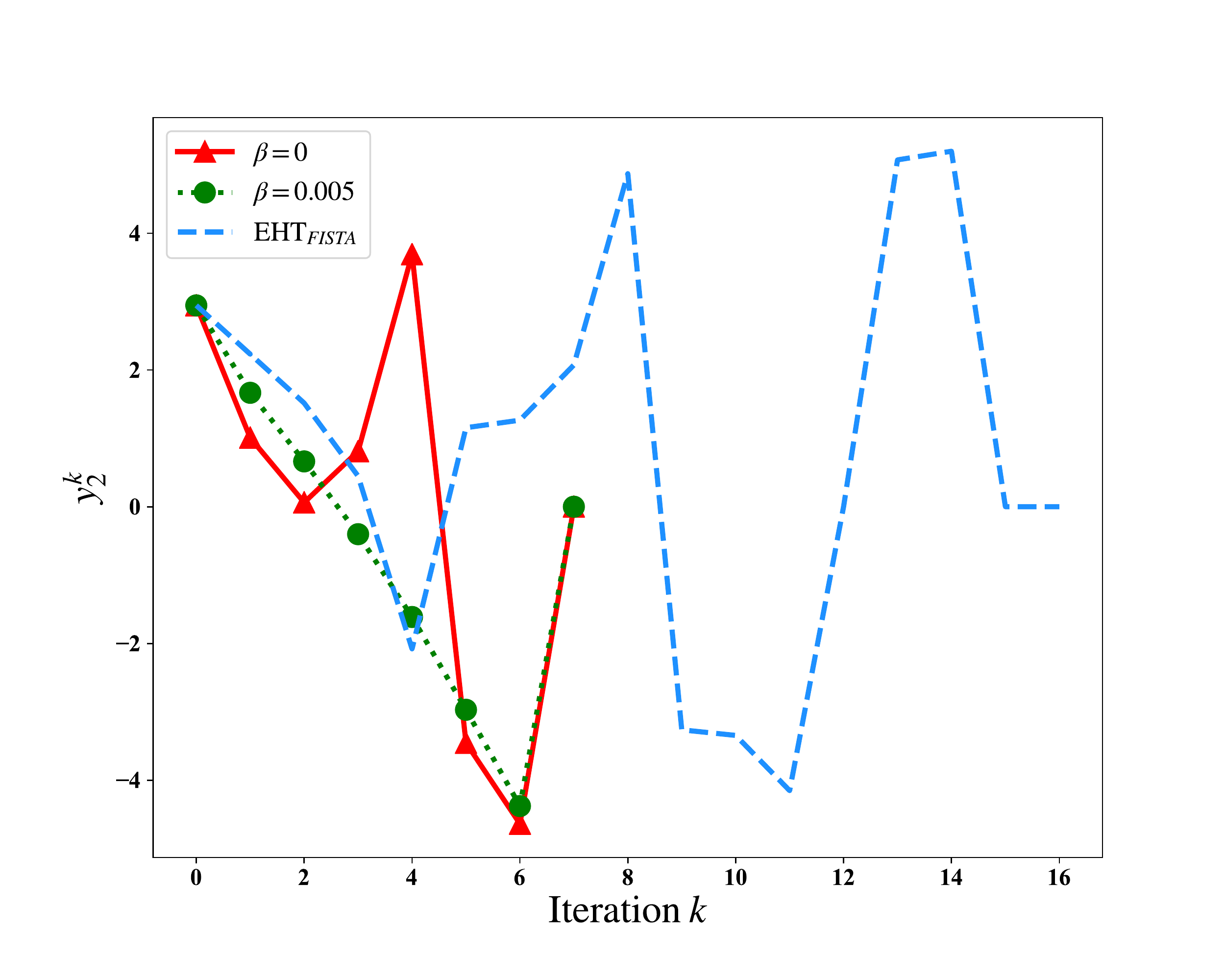}}
	}
	\caption{Numerical results for Example \ref{exa:5.1}.}
	\label{fig:fig5.1}       
\end{figure}

We compare the convergence of $\{f(x^k)\}$ and the evolution of $\{\|x^k\|_0\}$, as shown in Fig. \ref{fig:fig5.1}. The results demonstrate that Alg1 converges faster than $\mbox{EHT}_{\footnotesize\mbox{\emph{FISTA}}}$. Additionally, the numerical results in Fig. \ref{fig:fig5.1} reveal that the iterates produced by $\mbox{EHT}_{\footnotesize\mbox{\emph{FISTA}}}$ oscillate around $x^*$ before converging to it.
To better observe the behavior of $\{x^k\}$, we plot the sequence $\{y^k\}$ versus the iteration number $k$ in  Fig. \ref{fig1.c} and Fig. \ref{fig:fig6.2}, where $y_i^k:=\mathrm{sign}(x_i^k)*\mathrm{log}(|x_i^k|+\epsilon_1)$ with $\epsilon_1=1\mathrm{e}-14$, $\forall i\in [3]$. These figures show that the sequence $\{x^k\}$ generated by Alg1 with $\beta=0$ also exhibits oscillatory behavior, but it converges faster than $\mbox{EHT}_{\footnotesize\mbox{\emph{FISTA}}}$.
When $\beta=0.005$, the iterates generated by Alg1 converge directly to the global minimizer without oscillations, due to the presence of gradient differences between consecutive iterates.
In all cases, the iterates converge to the minimizer of problem \eqref{be1}, as illustrated in Fig. \ref{fig:fig5.1} and Fig. \ref{fig:fig6.2}.

\begin{figure}[htbp]
	\centerline{
		\includegraphics[width=0.6\textwidth]{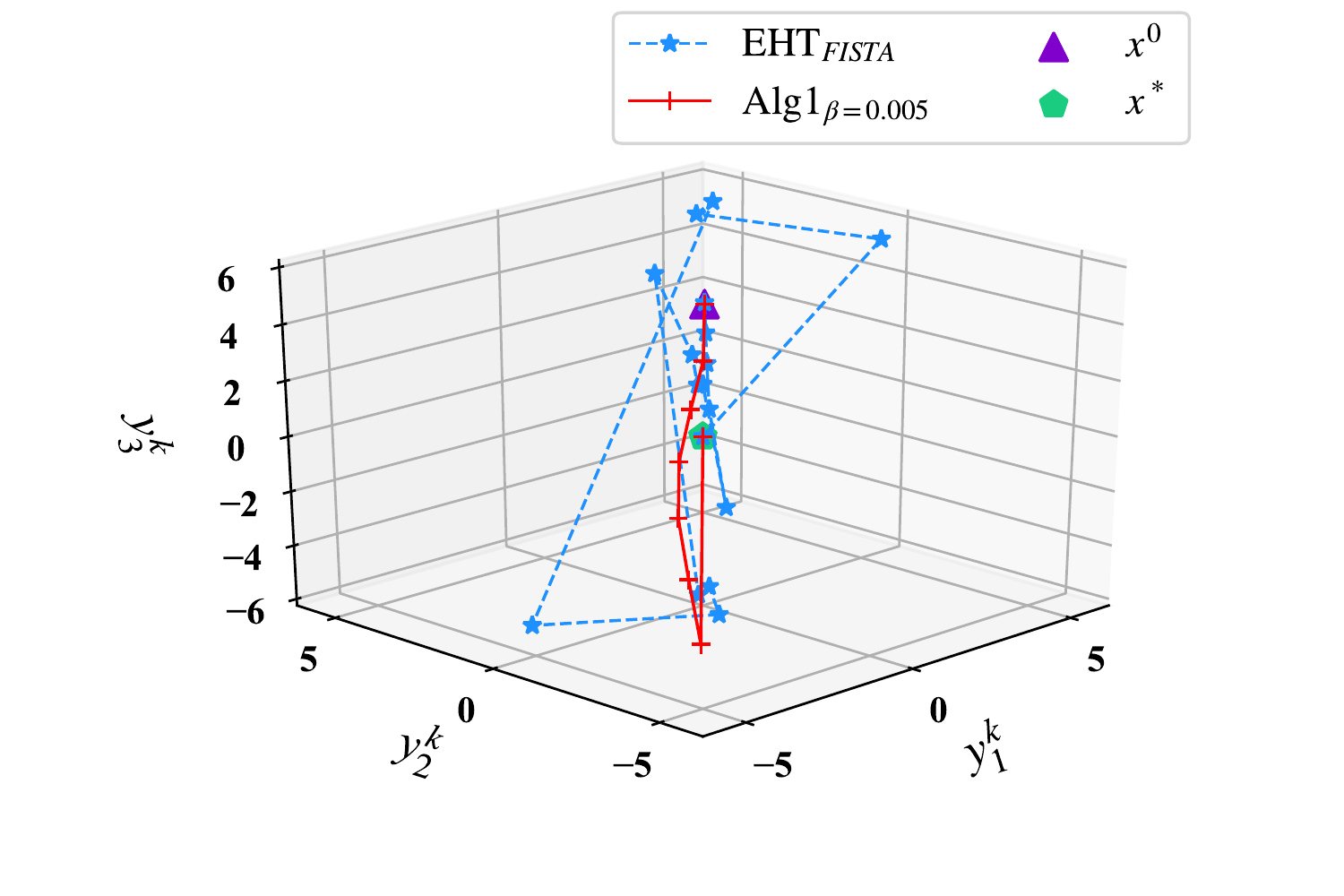}
	}
	\caption{The trajectories of $\{y^k\}$ generated by different algorithms for Example \ref{exa:5.1}.}
	\label{fig:fig6.2}       
\end{figure}


\begin{example}\label{ex2}
Linear regression is well-regarded for its extensive applications in areas such as signal and image processing and portfolio management, with the least squares loss function being the most commonly employed. In this example, we focus on a sparse problem of the following form:
\begin{equation}\label{cs}
\min_{\mathbf{-1}\leq x\leq \mathbf{5}} \frac{1}{2}\|Ax-b\|^2+0.06\|x\|_0,
\end{equation}
where $A\in\mathbb{R}^{m\times n}$ is the risk-return matrix and $b\in\mathbb{R}^m$ is the target return vector. We select $m<n$, which often appears in portfolio management problems.	
\end{example}

In this example, we compare the numerical performance of Alg1, $\mbox{Alg1}_{\epsilon=0}$, the IHT algorithm proposed by Lu \cite{Lu2014Iterative} and the accelerated hard thresholding (AHT) algorithm proposed in \cite{Bian2023Accelerated} for solving problem \eqref{cs}.
The Lipschitz constant of $\nabla f$ and the inital point are chosen as $L_f=\|A^{\mathrm{T}}A\|$ and $x^0=\mathrm{ones}(n,1)$, respectively. For simplicity, we denote $c=\frac{1}{h}+(2\beta+h)L_f$. Some parameters of the algorithms are set as follows:
$$\beta=0.01,\ h=10,\ \epsilon=1\mathrm{e}-3,\ \bar{\epsilon}=2\epsilon,\ \gamma_{\tiny{\mbox{Alg1}}}=c,\ \gamma_{\tiny{\mbox{Alg1}_{\epsilon=0}}}=2c.$$
The parameter $L$ in the IHT and AHT algorithms is chosen to be $L=6L_f$.
For any fixed problem $(m,n,\mbox{Spar})$, we randomly generate the problem data as follows:
\begin{center}
	\tt{\text{$A=\mbox{np.random.randn}(m,n);\ s=\mbox{Spar}*n;$}}\\
	\tt{\text{$x^{\star}=\mbox{np.random.uniform}(-2,6,(n,1));\ x^\star[:n-s]=0;$}}\\
	\tt{\text{$x^{\star}[x^{\star}>5]=5;\ x^{\star}[x^{\star}<-1]=-1;\ \mbox{np.random.shuffle}(x^\star);$}}\\
	\tt{\text{$\mbox{per}= 0.05 *\mbox{np.random.randn}(m, 1);\
			b=A\mbox{.dot}(x^\star)+\mbox{per}.$}}\\
	
\end{center}

In Fig. \ref{fig:fig5.2} and Fig. \ref{fig31.a}, we present the average results of $100$ independent trials for problem \eqref{cs} with parameters $(m, n, \mathrm{Spar})=(2000, 400, 20\%)$. At this point, the termination condition is set to $300$ iterations. The figures illustrate the averaged outcomes across all trials, focusing on three key aspects: the convergence of $\{\|x^k\|_0\}$, the behavior of $\{F(x^k)-F^{\epsilon}\}$ and the changes in $\{\sum\|x^{k+1}-x^k\|\}$ during the iterative process. Here, $F^{\epsilon}$ denotes the minimum of the objective function values produced by four algorithms.

\begin{figure}[htbp]
	\centerline{
		\subfigure[$\{\|x^k\|_0\}$]{\label{fig2.a}\includegraphics[width=0.55\textwidth]{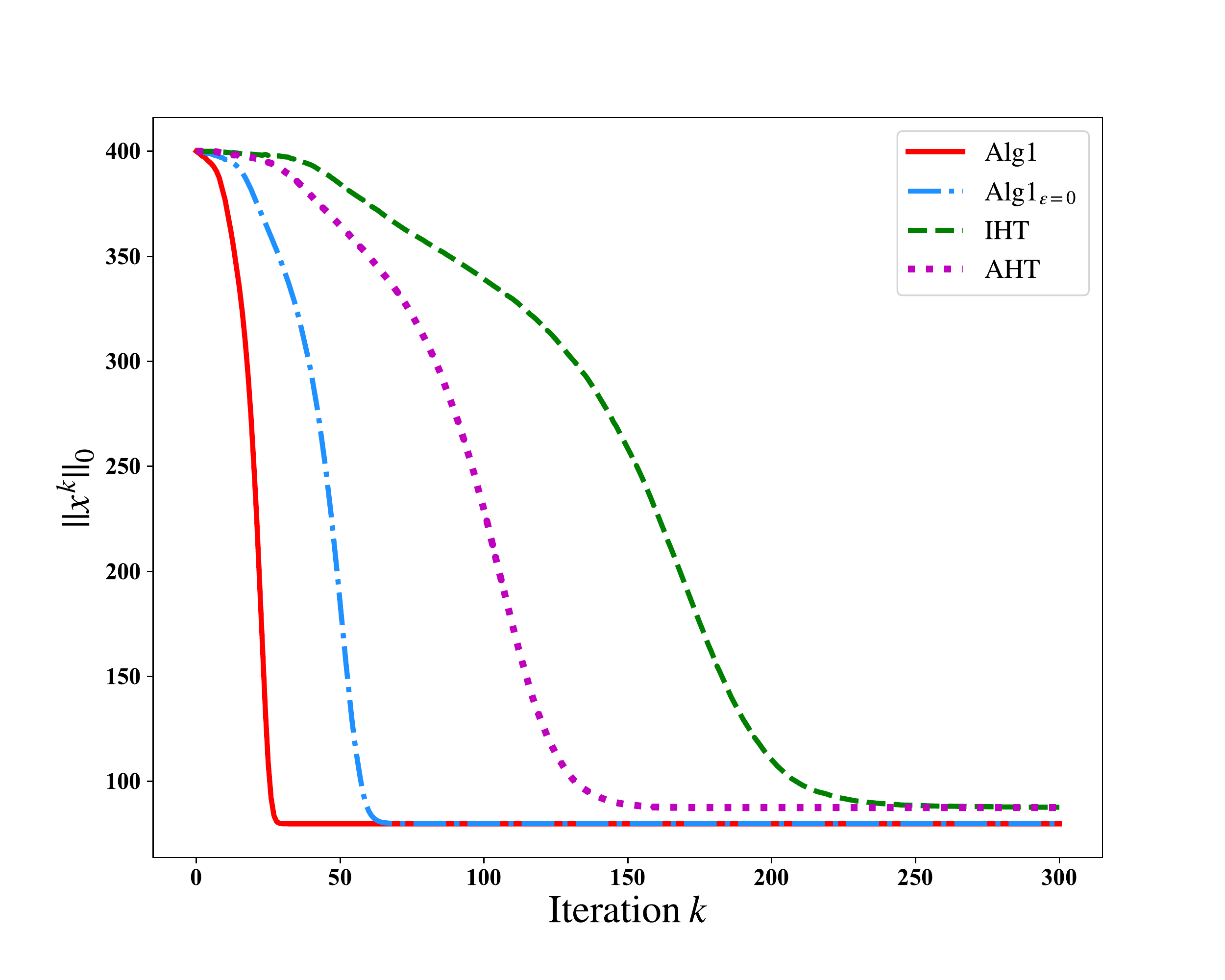}}
		\subfigure[$\{F(x^k)-F^{\epsilon}\}$]{\label{fig2.b}\includegraphics[width=0.55\textwidth]{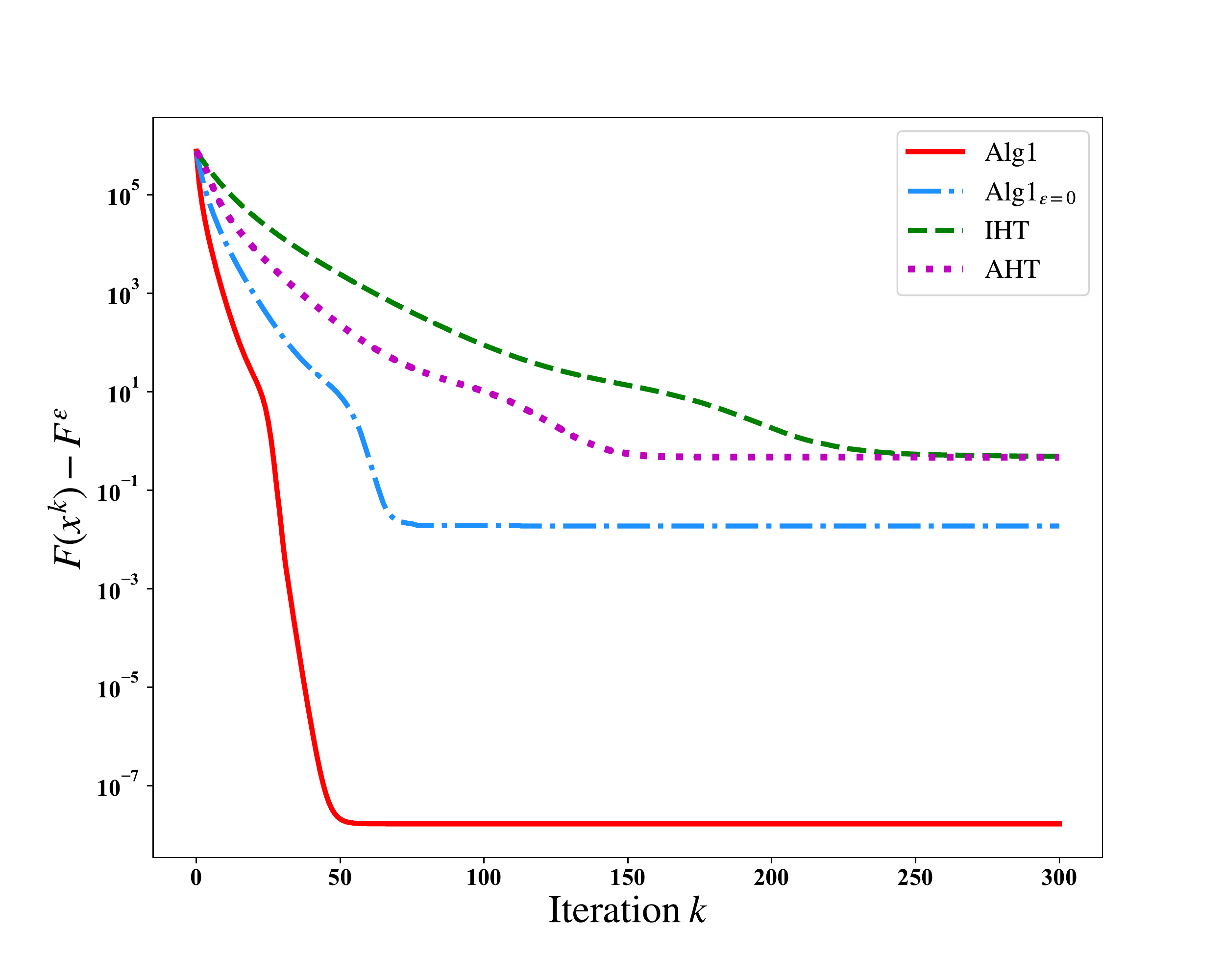}}
	}
	\caption{The averages of numerical results for Example \ref{ex2} with $m=2000$, $n=400$ and $\mbox{Spar}=20\%$.}
	\label{fig:fig5.2}      
\end{figure}

Fig. \ref {fig:fig5.2} demonstrates that Alg1 reaches an $\bar{\epsilon}$-local minimizer more quickly than the other algorithms, followed by $\mbox{Alg1}_{\epsilon=0}$ and the AHT algorithm, with the IHT algorithm being the slowest. These results confirm that incorporating extrapolation significantly accelerates the convergence rate compared to algorithms without extrapolation. In Fig. \ref{fig2.a}, it is evident that the zero norms of the solutions obtained by Alg1 and $\mbox{Alg1}_{\epsilon=0}$ are closer to that of the true solution, indicating higher accuracy. From the perspective of the cumulative changes $\{\sum\|x^{k+1}-x^k\|\}$, Fig. \ref{fig31.a} also shows that Alg1 and $\mbox{Alg1}_{\epsilon=0}$ exhibit a faster convergence rate compared to the other methods.  

\begin{figure}[htbp]
	\centerline{
		\subfigure[ $(m, n, \mbox{Spar})=(2000, 400, 20\%)$]{\label{fig31.a}\includegraphics[width=0.55\textwidth]{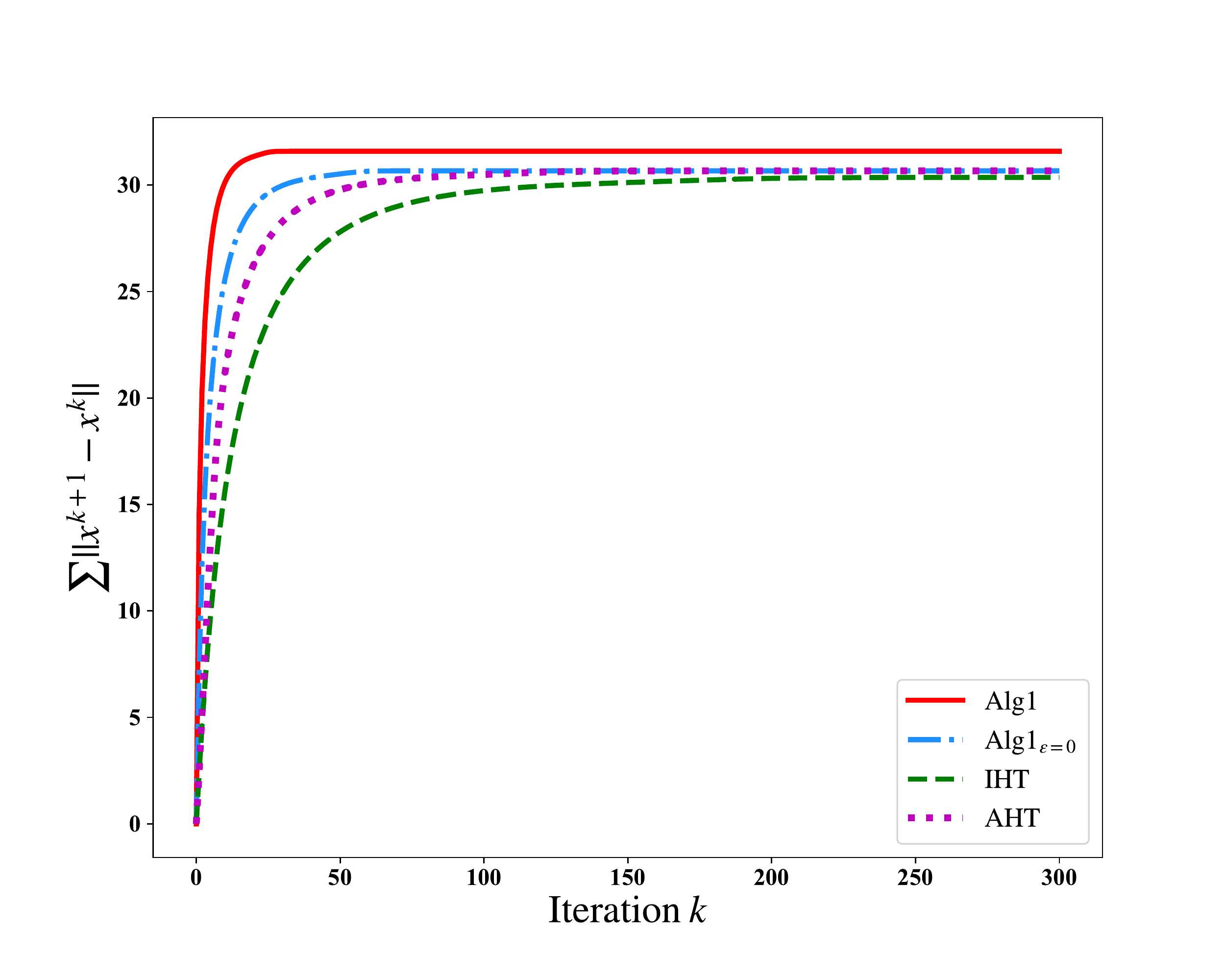}}
		\subfigure[ $(m, n, \mbox{Spar})=(3000, 600, 10\%)$]{\label{fig31.b}\includegraphics[width=0.55\textwidth]{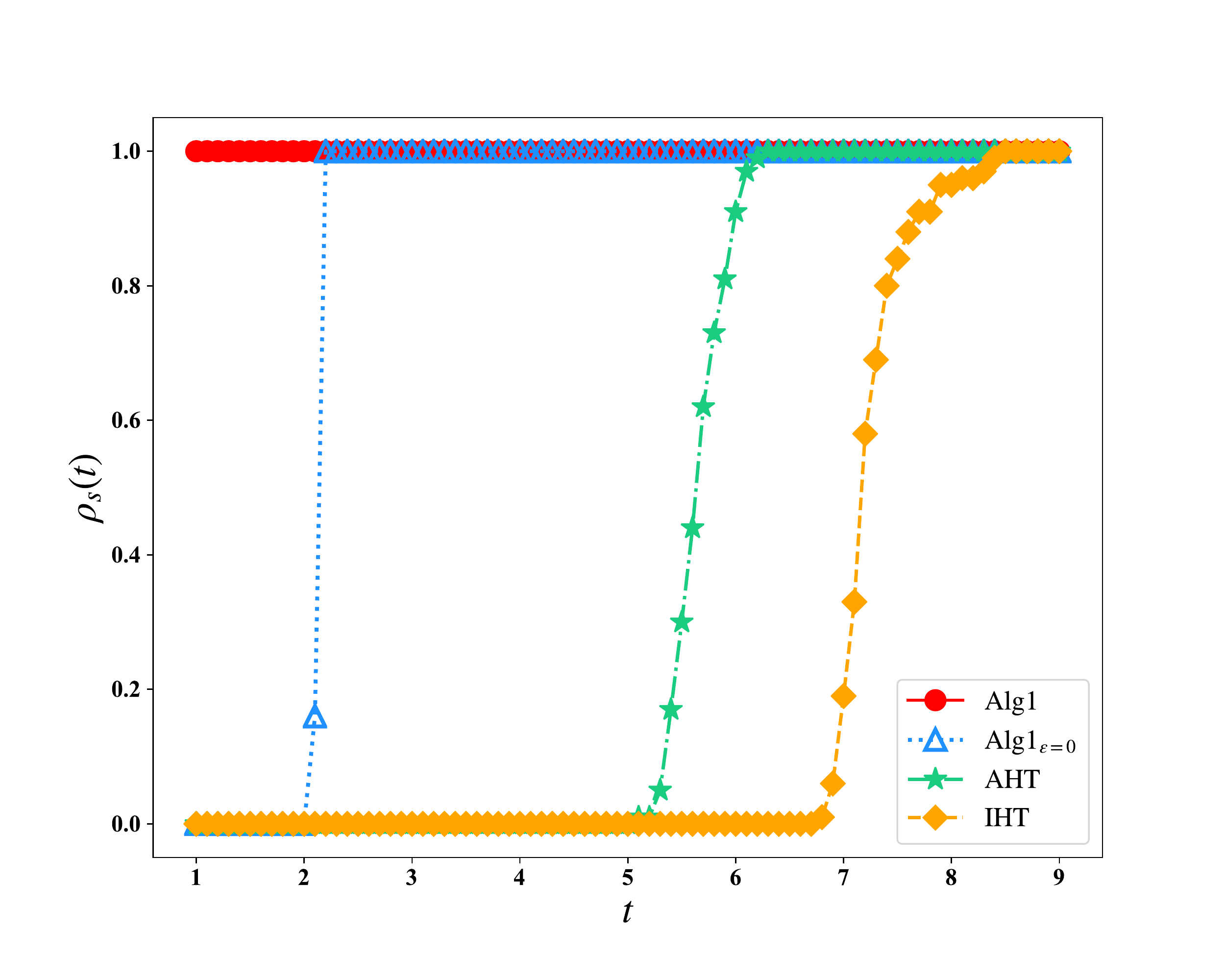}}
	}
	\caption{Convergence of $\{\sum\|x^{k+1}-x^k\|\}$ and the performance profiles of algorithms for Example \ref{ex2}.}
	\label{fig:fig5.31}       
\end{figure}

\begin{figure}[htbp]
	\centerline{
		\subfigure[ The relative error]{\label{fig3.a}\includegraphics[width=0.55\textwidth]{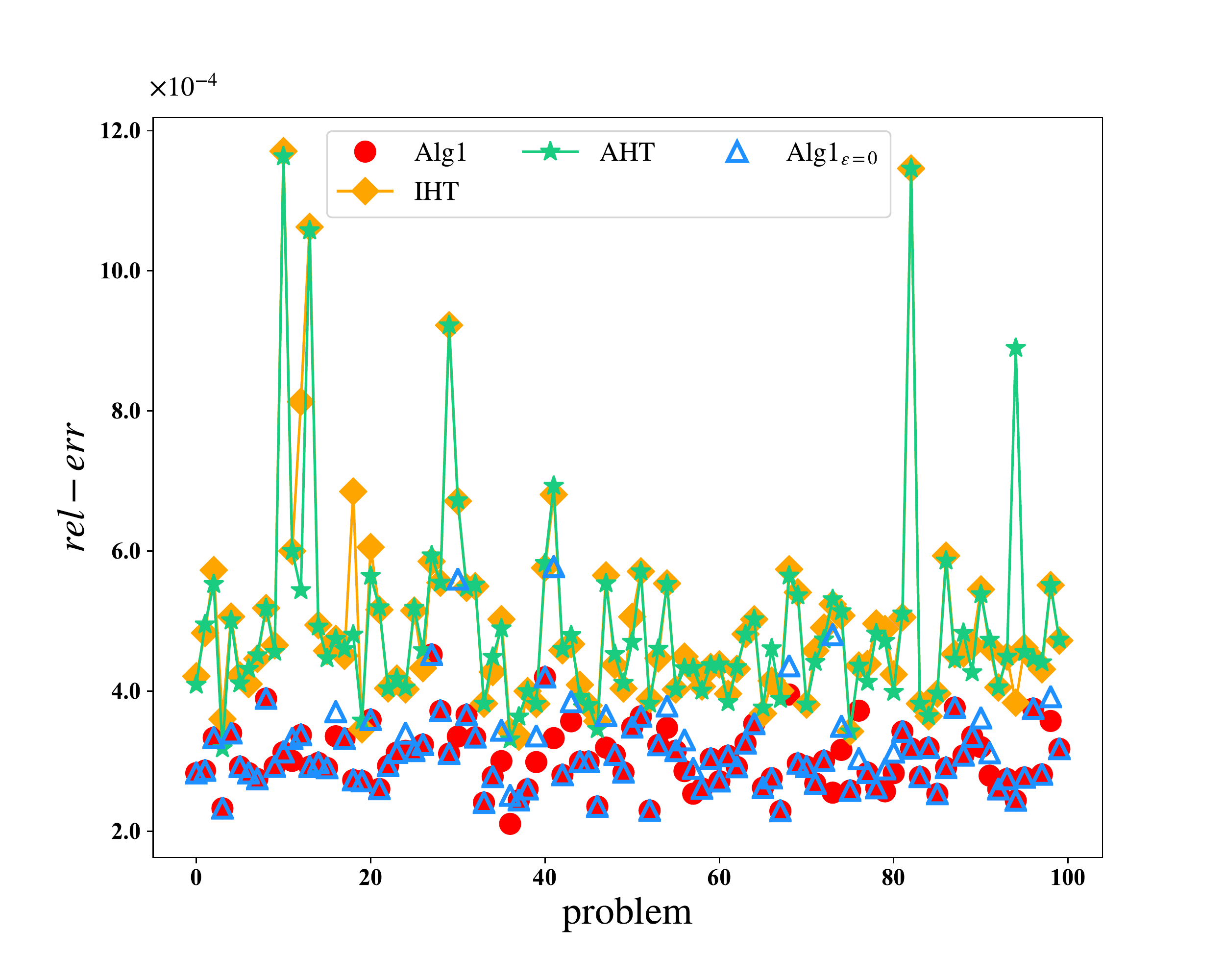}}
		\subfigure[ The sparsity regression rate]{\label{fig3.b}\includegraphics[width=0.55\textwidth]{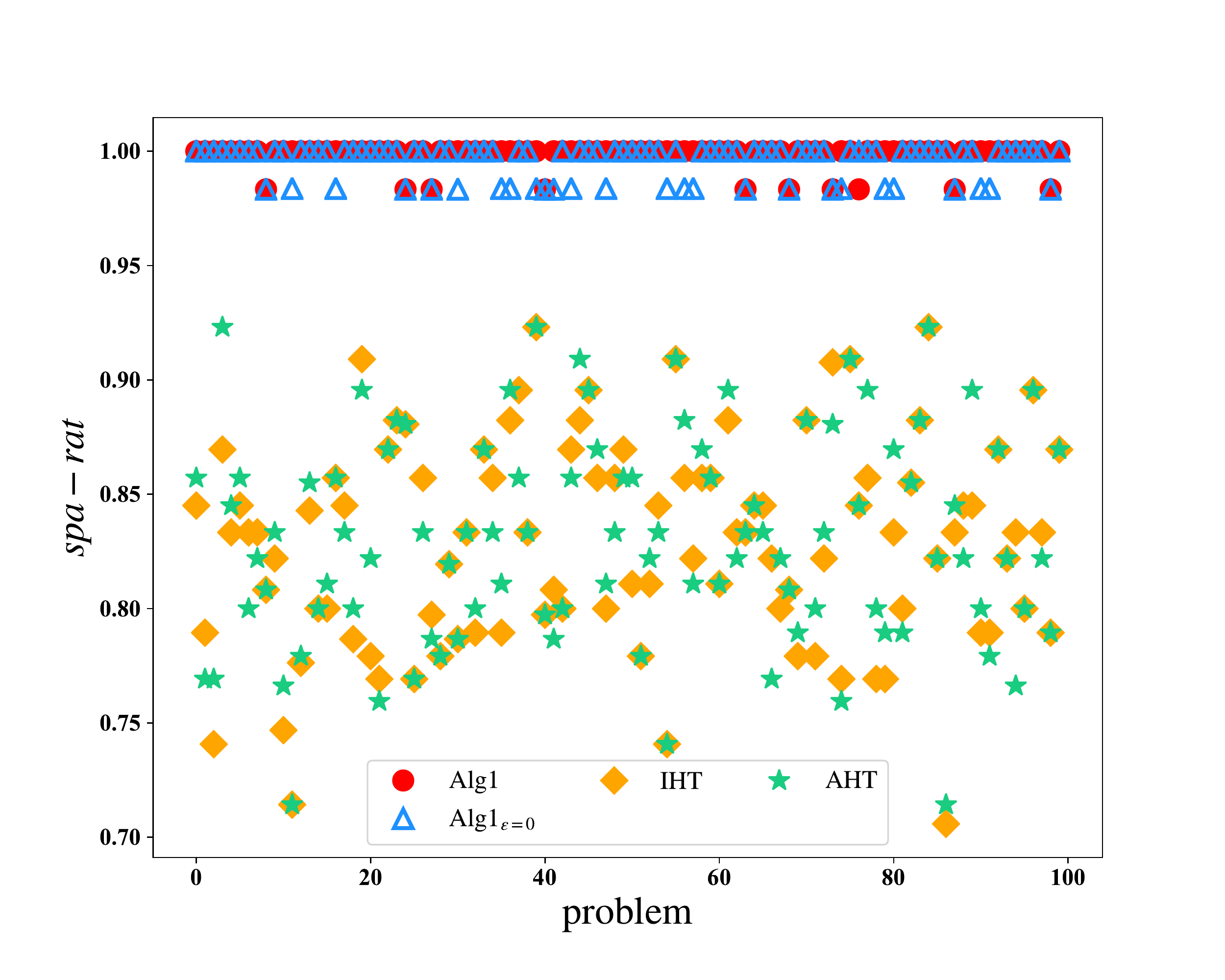}}
	}
	\caption{The relative errors and sparsity regression rates for Example \ref{ex2} with $m=3000$, $n=600$ and $\mbox{Spar}=10\%$.}
	\label{fig:fig5.3}       
\end{figure}

When $(m, n, \mbox{Spar})=(3000, 600, 10\%)$ is applied, we conduct $100$ independent experiments and present the results in Fig. \ref{fig31.b} and Fig. \ref{fig:fig5.3}. 
The performance profile \cite{Dolan2002Benchmarking}, relative error (\textit{rel-err}) and sparsity regression rate (\textit{spa-rat}) are chosen as the tools for comparing the solvers. Let $\mathcal{S}$ be the set of algorithms and $\mathcal{P}$ be the set of problems. We define the performance ratio $r_{p, s}=\frac{k_{p, s}}{\min\{k_{p, s} : s\in\mathcal{S}\}}$, where $k_{p, s}$ is the number of iterations required to solve problem $p\in\mathcal{P}$ by algorithm $s\in\mathcal{S}$. The performance profile of the algorithm $s\in\mathcal{S}$ is defined by
\begin{equation*}
\rho_{s}(t)=\frac{\left|\{p\in\mathcal{P}: r_{p, s}\leq t\}\right|}{|\mathcal{P}|},
\end{equation*}
where $t\in\mathbb{R}$. Next, we introduce the definitions of the relative error and sparsity regression rate of the final output point $x^a$ with respect to $x^*$, i.e.
\begin{equation*}
\mbox{\textit{rel-err}}=\frac{\|x^a-x^*\|}{\|x^a\|}\quad\mbox{and}\quad \mbox{\textit{spa-rat}}=\frac{|\Gamma(x^a)\cap\Gamma(x^*)|}{\max\{|\Gamma(x^a)|, |\Gamma(x^*)|\}}.
\end{equation*}

Fig. \ref{fig31.b} further illustrates that, compared to all the considered algorithms, Alg1 requires the fewest iterations to solve the problem, while $\mbox{Alg1}_{\epsilon=0}$ follows with a slightly higher count. Additionally, the results shown in Fig. \ref{fig:fig5.3} indicate that Alg1 and its variant $\mbox{Alg1}_{\epsilon=0}$ have a higher probability of successfully identifying the true solution, and outperform other methods in terms of both accuracy and speed. In particular, they exhibit faster convergence and lower misidentification rates when determining the positions of the nonzero and zero elements of the true solution.

\vspace{0.12cm}

\begin{example}\label{ex3}
	In this example, we solve the sparse logistic regression problem by Alg1, $\mbox{Alg1}_{\epsilon=0}$, IHT and AHT, which has numerous applications in medical diagnois and bioinformatics. For $m$ samples $\{a_i\}_{i=1}^m$ with $n$ features, we denote $x=(w, v)\in\mathbb{R}^{n+1}$ and the specific form of the problem is as follows
	\begin{equation}\label{lg3}
	\min_{\mathbf{-1}\leq x\leq \mathbf{1}} \frac{1}{m}\sum_{i=1}^m \log (1+e^{-y_i(d_ix)})+0.05\|w_+\|_0,
	\end{equation}
	where $y_i\in\{-1, 1\}$ and $d_i=(a_i^\mathrm{T}, 1)$ is the $i$th row of matrix $D\in\mathbb{R}^{m\times (n+1)}$.
\end{example}

For the given parameter $(m,n,\mbox{Spar})$, the data is randomly generated by
\begin{center}
	\tt{\text{$A=\mbox{np.random.randn}(m,n);\ s=\mbox{Spar}*n;$}}\\
	\tt{\text{$x^{\star}=\mbox{np.random.uniform}(0,2,(n,1));\ x^\star[:n-s]=0;$}}\\
	\tt{\text{$x^{\star}[x^{\star}>1]=1;\ \mbox{np.random.shuffle}(x^\star);\ v = \mbox{np.random.rand}(1)[0];$}}\\
	\tt{\text{$y = \mbox{np.sign}(A\mbox{.dot}(x^\star)+ v);\ x^\star= \mbox{np.append}(x^\star, v).$}}\\
\end{center}
Let $x^a=(w^a,v^a)$ denote the obtained point when the algorithm terminates, and define $x^{ap}=(w_+^a,v^a)$, where $w_+^a:=\max\{w^a,0\}$.
We set $L_f=\frac{1}{m}\|A\|^2$, $x^0=0.5*\mathrm{ones}(n+1,1)$ and $L=9L_f$, while keeping the remaining parameters identical to those in Example \ref{ex2}.

For the problem \eqref{lg3} with $(m, n, \mbox{Spar})=(400, 50, 20\%)$, the numerical results are illustrated in Fig. \ref{fig:fig6.1}. It can be observed that the outputs $x^k$ of both Alg1 and $\mbox{Alg1}_{\epsilon=0}$ not only satisfy the termination condition faster but also are closer to the true solution.

\begin{figure}[htbp]
	\centerline{
		\subfigure[ $\{\|w^k_+\|_0\}$]{\label{fig3.a}\includegraphics[width=0.55\textwidth]{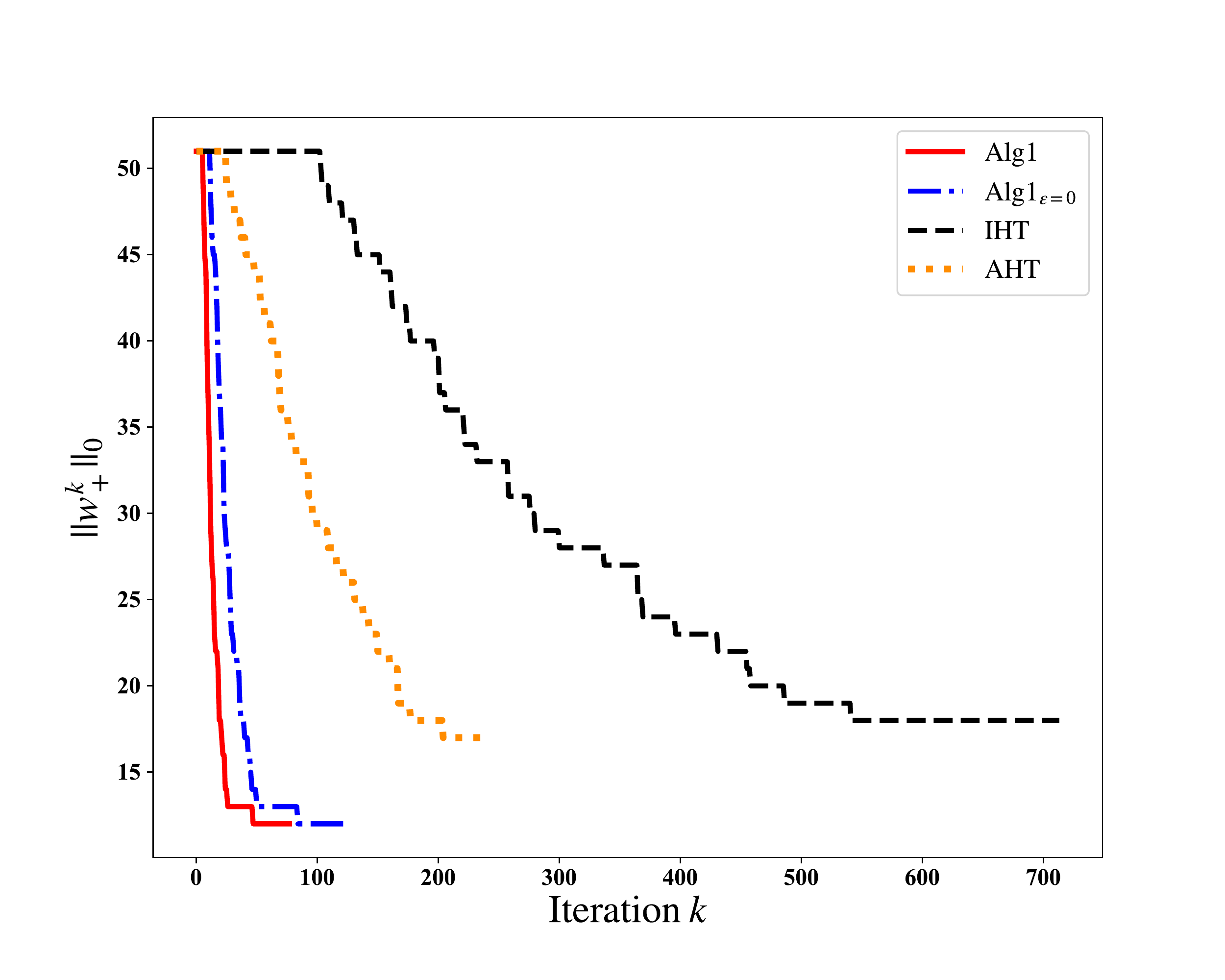}}
		\subfigure[ $\{F(x^k)\}$]{\label{fig3.b}\includegraphics[width=0.55\textwidth]{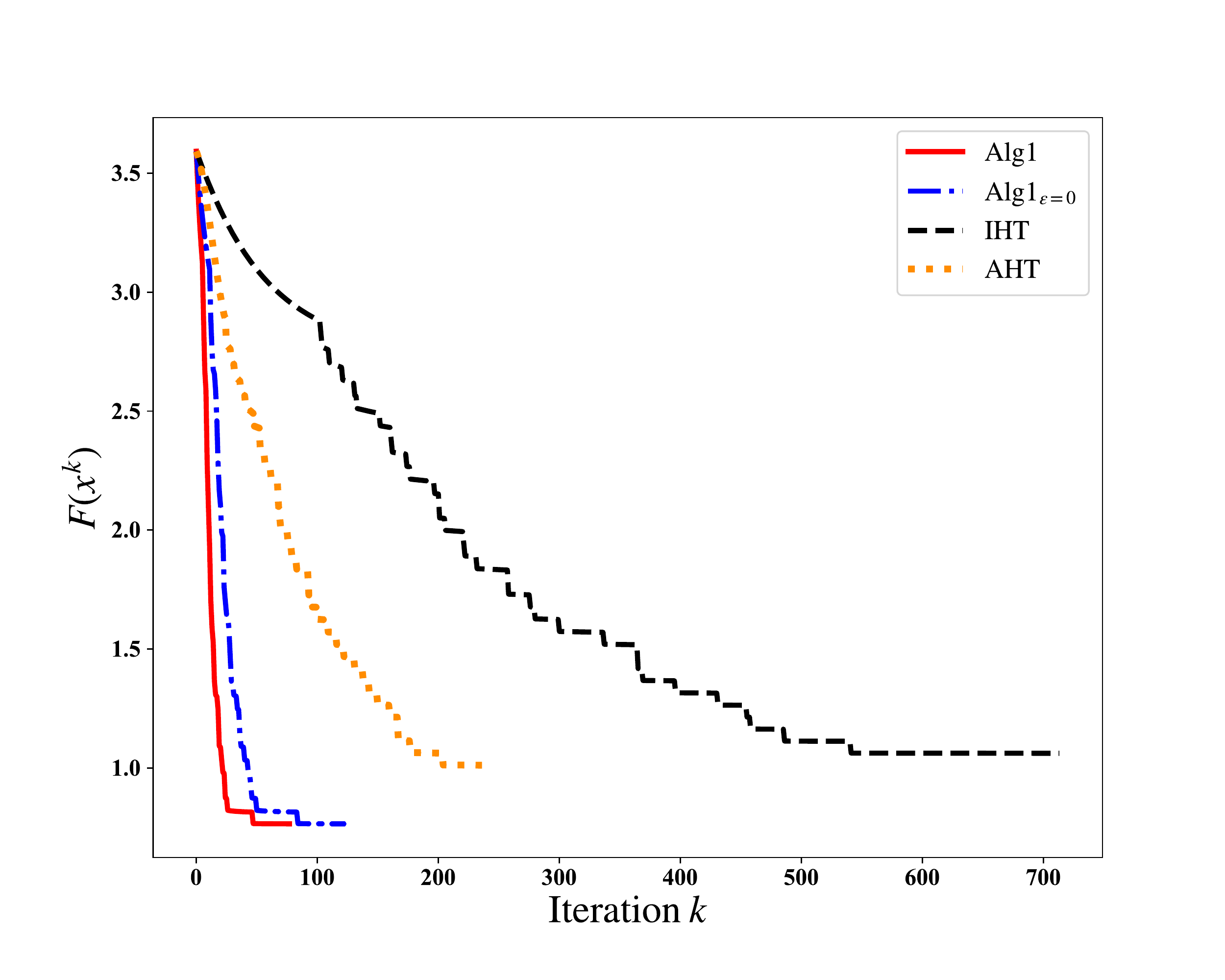}}
	}
	\caption{Convergence of $\{\|w^k_+\|_0\}$ and $\{F(x^k)\}$ for Example \ref{ex3} with $m=400$, $n=50$ and $\mbox{Spar}=20\%$.}
	\label{fig:fig6.1}     
\end{figure}

\begin{figure}[htbp]
	\centerline{
		\subfigure[Training\ problem]{\label{fig3.a}\includegraphics[width=0.55\textwidth]{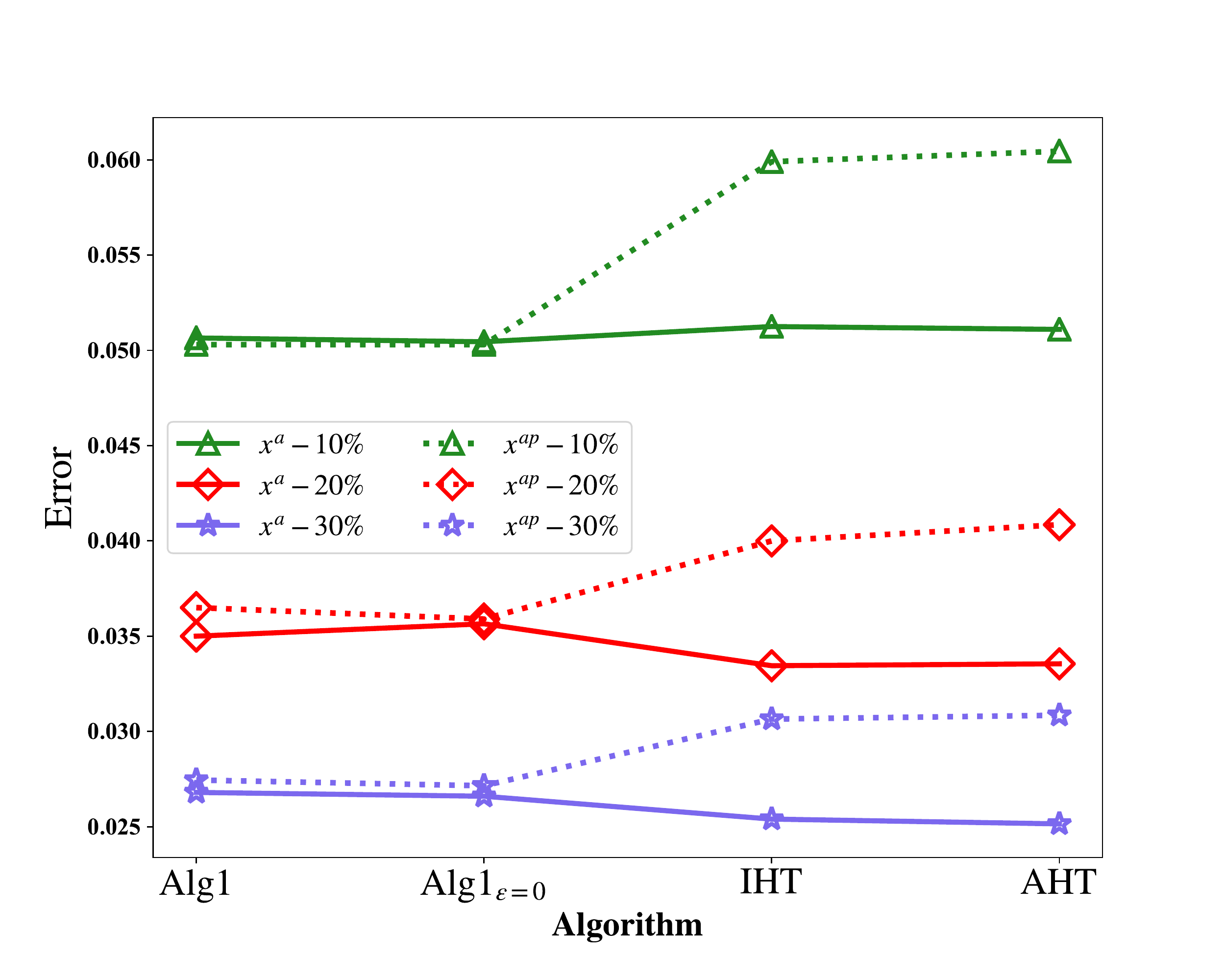}}
		\subfigure[Testing\ problem]{\label{fig3.b}\includegraphics[width=0.55\textwidth]{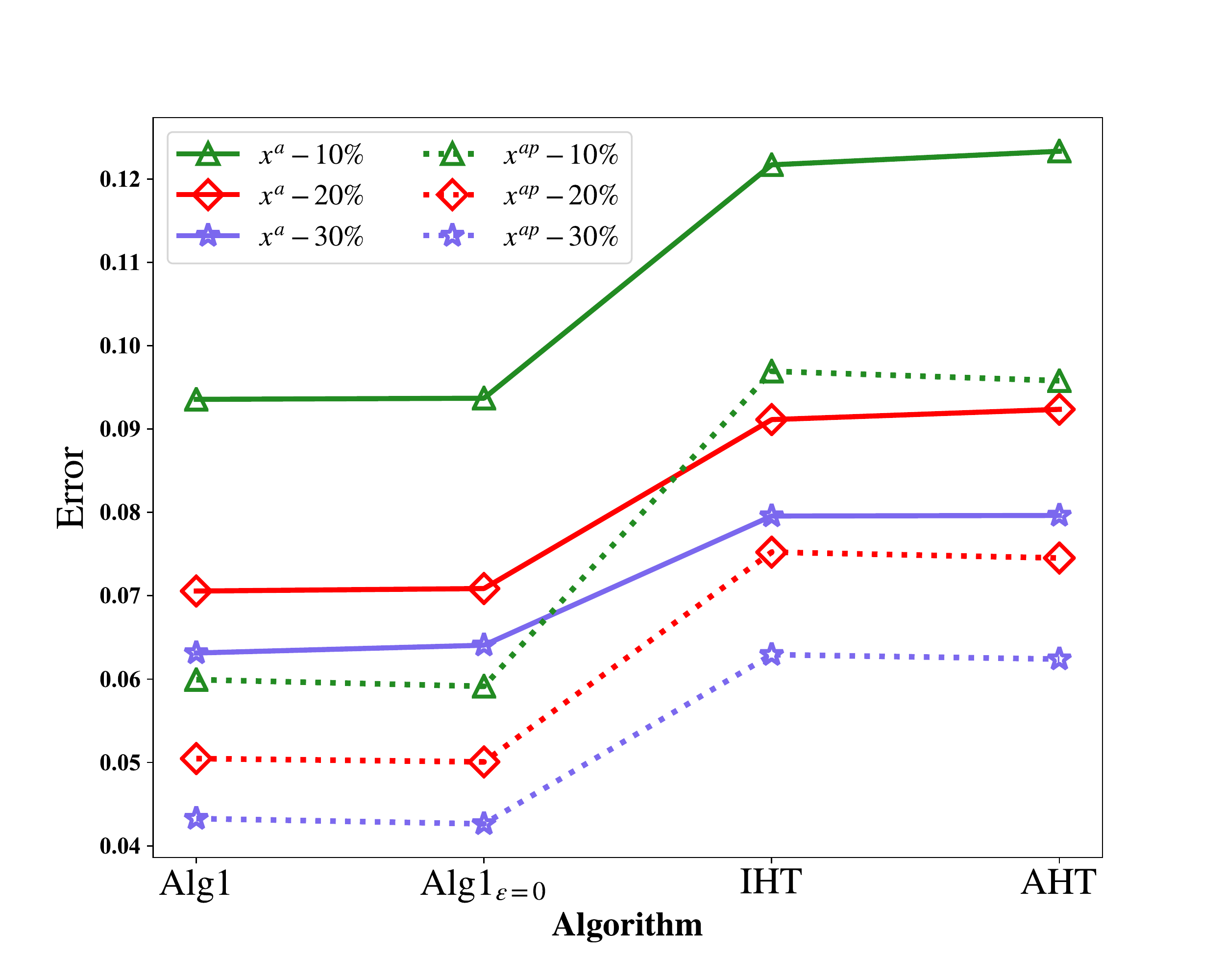}}
	}
	\caption{The error rate of each algorithm for Example \ref{ex3} with different sparsity levels.}
	\label{fig:fig6.4}       
\end{figure}

We randomly generate a dataset of $1000$ samples, each with $50$ features, and partition these samples into two subsets: $400$ samples for training and $600$ samples for testing. Given the model parameters  $x=(w,v)\in\mathbb{R}^{n+1}$ and a sample $a_i\in\mathbb{R}^n$, the predicted outcome $y_i\in\{-1, 1\}$ is determined by
\begin{equation*}
r(a_i)=\mbox{sgn}({a_i}^{\mathrm{T}}w+v),
\end{equation*}
where the function $\mbox{sgn}(t)$ outputs $1$ if $t>0$ and $-1$ otherwise. To evaluate the quality of the obtained solution, we compute the error rate \cite{Lu2012Sparse} as follows:
\begin{equation*}
\mbox{Error}:=\frac{1}{m}\cdot \sum_{i=1}^m \|r(a_i)-y^{true}_i\|_0,
\end{equation*}
where $m$ denotes the number of samples and $y^{true}_i$ represents the truth label of sample $a_i$.

To investigate the impact of sparsity, we conduct $50$ independent tests for different sparsity levels, generating fresh data for each test. The average error rates of various algorithms over these $50$ tests are displayed in Fig. \ref{fig:fig6.4}. As evident from the results, the weighting coefficients obtained by Alg1 and $\mbox{Alg1}_{\epsilon=0}$ yield relatively lower errors in classification tasks compared to other methods when tested on the problem.

\begin{figure}[htbp]
	\centerline{
		\subfigure[Iterations $k$ of each algorithm]{\label{fig3.a}\includegraphics[width=0.5\textwidth]{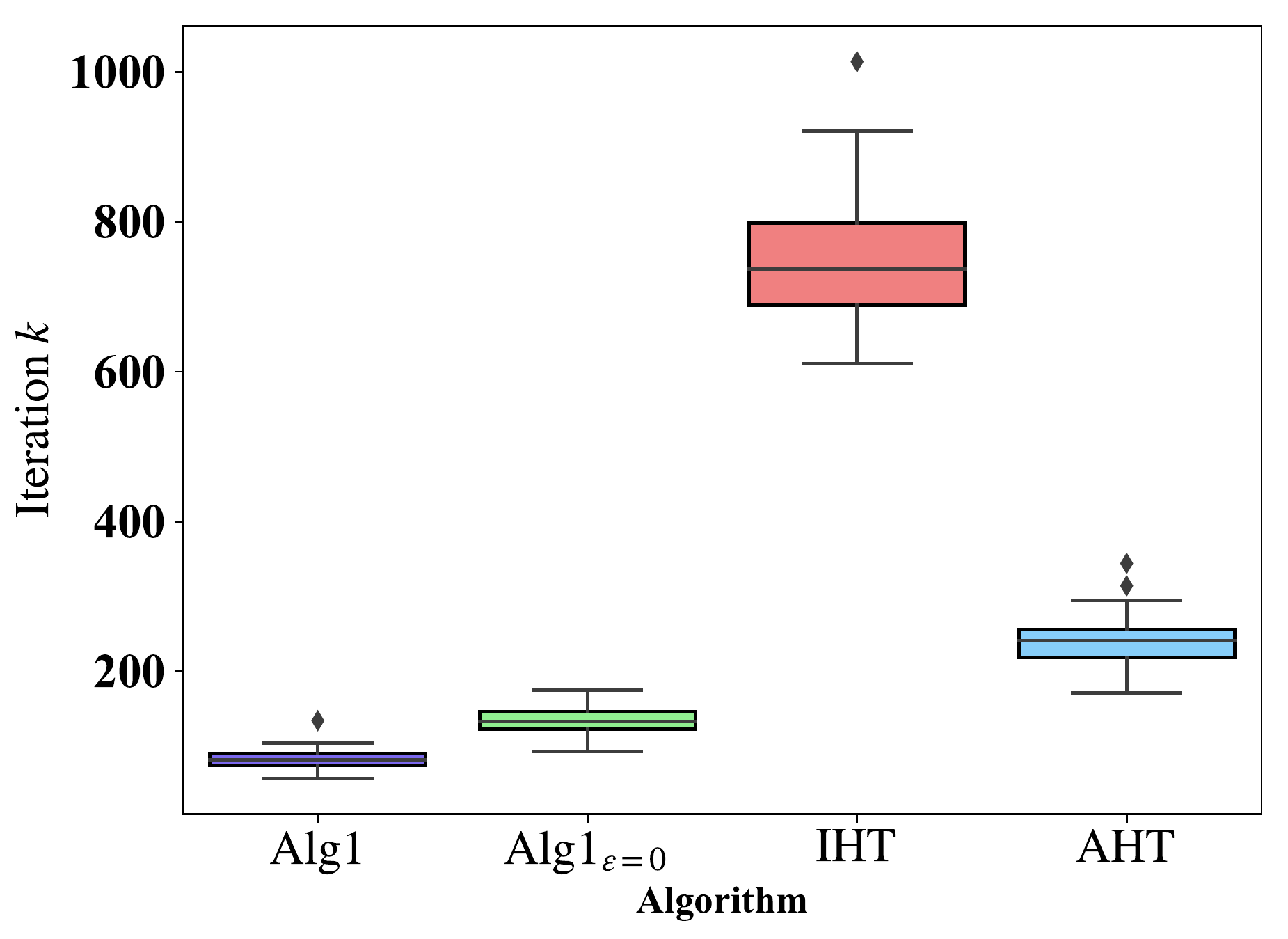}}
		\subfigure[$\|w^a_+\|_0$ of each algorithm]{\label{fig3.b}\includegraphics[width=0.5\textwidth]{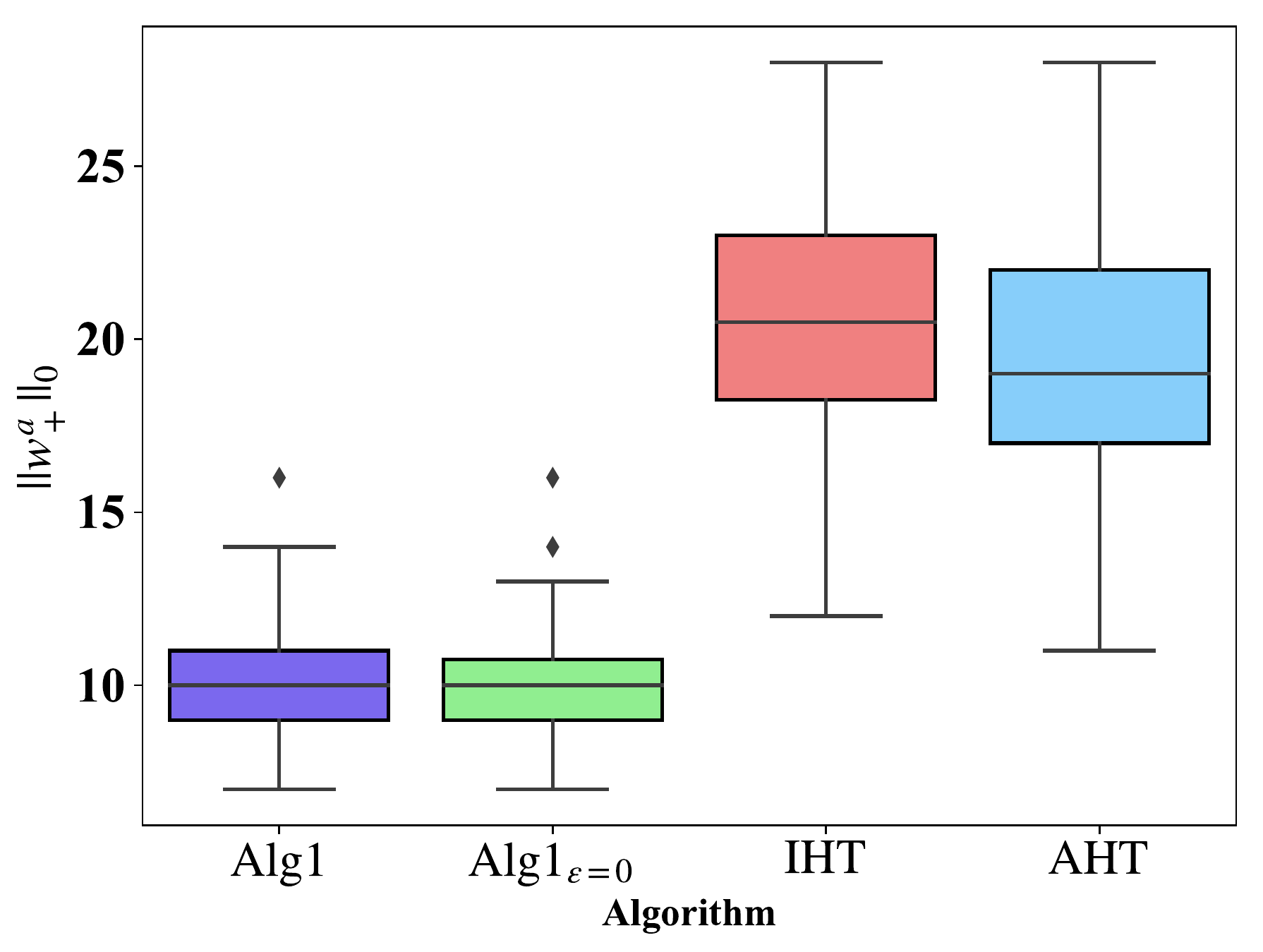}}
	}
	\caption{The number of iterations and $\|w^a_+\|_0$ for Example \ref{ex3} with $m=400$, $n=50$ and $\mbox{Spar}=20\%$.}
	\label{fig:fig6.3}       
\end{figure}

In Fig. \ref{fig:fig6.3}, we present a box plot to illustrate the number of iterations and $\|w^a_+\|_0$ from $50$ independent trials solved by different algorithms under a sparsity level of $20\%$. This visualization provides a clearer demonstration of the rapid convergence achieved by Alg1.

\section{Conclusion}\label{sec13}
In this paper, we propose an extrapolated hard thresholding algorithm with Hessian-driven damping and dry friction for solving problem \eqref{pro1}, by temporally discretizing an inertial gradient system with dry friction and Hessian-driven damping. When the dry friction coefficient $\epsilon\neq 0$, we prove that $\sum_{k=1}^{\infty}\|x^{k+1}-x^k\|<\infty$. This implies that the iterates generated by the proposed algorithm have finite length. Notably, our proof does not rely on the K{\L} property. Moreover, we provide an equivalent characterization of the local minimizers of problem \eqref{pro1} and based on this, define a class of $\epsilon$ approximate local minimizers. We further prove that the iterates converge to an $\epsilon$-local minimizer of the problem. In the case where $\epsilon=0$, while the algorithm does not exhibit the finite length property, we show that any accumulation point of the iterates is a local minimizer. We discuss the stability of the algorithm and give a sufficient condition on the errors to guarantee the same convergence results to the algorithm without perturbation.
Finally, we demonstrate through numerical simulations that the proposed algorithm can achieve a better solution more quickly.

\backmatter

\bmhead{Acknowledgements}
The research of Fan Wu is partially supported by the Postdoctoral Fellowship Program of CPSF under Grant Number GZC20233475. The research of Wei Bian is partially supported by the National Natural Science Foundation of China Grants (12425115, 12271127, 62176073).

\section*{Declarations}

\bmhead{Data Availibility}
The datasets generated during the current study are available from the corresponding author upon reasonable request.

\bmhead{Conflict of interest}
The authors declare no conflict of interest.

\bibliography{reference}
\end{document}